\documentclass[11pt]{scrartcl}
\usepackage{abstract}

\usepackage[english]{babel}
\usepackage[utf8]{inputenc}		

\usepackage[
	bookmarks=true,
	plainpages=false,
	linktocpage,
	colorlinks=true,
	citecolor=green!80!black,
	linkcolor=red!70!black,
	filecolor=magenta,
	urlcolor=magenta,
	breaklinks,
    pdfauthor={Anna M. Limbach and Martin Winter},
]{hyperref}
\usepackage{url}

\newcommand{\isdef}{\ensuremath{\mathrel{\mathop:}=}}		

\usepackage{enumitem}[itemsep=100pt]
\let\oldvec\vec 
\usepackage{amsfonts, amsmath, amssymb}
\usepackage{amsthm}	
\usepackage{mathtools}
\theoremstyle{plain}

\usepackage[font=small,labelfont=bf]{caption}



\usepackage[nameinlink,capitalize,noabbrev]{cleveref}

\usepackage[normalem]{ulem} 
\usepackage{bold-extra}


\makeatletter
\renewcommand*{\eqref}[1]{%
  \hyperref[{#1}]{\textup{\tagform@{\ref*{#1}}}}%
}
\makeatother



\newcommand{\labelstyle}[1]{\upshape(\textit{#1})}
\newcommand{\mylabel}{\labelstyle{\roman*}}
\newenvironment{myenumerate}%
    {\begin{enumerate}[label=\mylabel]}%
    {\end{enumerate}}



\usepackage{pgf, tikz}
\usepackage{tikz-cd}
\usepackage{tikz-3dplot}
\usetikzlibrary{calc}
\usetikzlibrary{arrows, arrows.meta}
\usetikzlibrary{positioning}
\usetikzlibrary{fit,decorations.pathreplacing,matrix}
\usetikzlibrary{patterns}
\usepackage{wrapfig}
\usepackage{nicefrac}					
\usepackage{color}

\usepackage[
    colorinlistoftodos,
    backgroundcolor=yellow,
    textsize=footnotesize,
    disable
]{todonotes}






\renewcommand{\emph}{\textbf}


\DeclareMathOperator{\Aut}{Aut}

\DeclareMathOperator{\dist}{dist}

\DeclareMathOperator{\Hex}{Hex}

\DeclareMathOperator{\Deg}{\textsc{deg}_{26}}
\DeclareMathOperator{\Irr}{\overline{\textsc{deg}}_{26}}
\newcommand{\triangularshaped}{tri\-an\-gu\-lar-shaped}
\newcommand{\pyramid}{\triangularshaped\ graph}
\newcommand{\pyramids}{\triangularshaped\ graphs}
\newcommand{\subpyramid}{\triangularshaped\ subgraph}
\newcommand{\subpyramids}{\triangularshaped\ subgraphs}

\newcommand{\menge}[1]{\ensuremath{\mathbb{#1}}}
\newcommand{\N}{\menge{N}}
\newcommand{\Z}{\menge{Z}}




\newtheorem{lem}{Lemma}[section]		
\newtheorem{theo}[lem]{Theorem}
\newtheorem{cor}[lem]{Corollary}		
\newtheorem{conj}[lem]{Conjecture}		
\newtheorem{prop}[lem]{Proposition}		
{\begin{proof}[Well--defined]}
	{\end{proof}}
	
\theoremstyle{definition}
\newtheorem{quest}[lem]{Question}		
\newtheorem{ex}[lem]{Example}		
\newtheorem{rem}[lem]{Remark}
\newtheorem{defi}[lem]{Definition}	


\theoremstyle{plain}

\providecommand{\customgenericname}{}
\newcommand{\newcustomtheorem}[2]{%
  \newenvironment{#1}[1]
  {%
   \renewcommand\customgenericname{#2}%
   \renewcommand\theinnercustomgeneric{##1}%
   \innercustomgeneric
  }
  {\endinnercustomgeneric}
}

\newcustomtheorem{theoremX}{Theorem}


\newcommand{\K}[1]{\mathbb{K}(#1)}

\newcommand{\isom}{%
			\mathrel{\ooalign{\hss\hbox{\resizebox{.03\hsize}{.006\vsize}{$\rightarrow$}}\hss\cr%
			\kern0.1ex\raise0.45ex\hbox{\resizebox{.02\hsize}{.004\vsize}{$\sim$}}}}}
		
\newcommand{\iso}[1]{\overset{#1}{\isom}}

\newcommand{\neig}[2]{N_{#1}\left[#2\right]}

\newcommand{\ccon}{clique convergent}
\newcommand{\cdiv}{clique divergent}

\newcommand{\degr}[1]{\Deg(#1)}
\newcommand{\irr}[1]{\Irr(#1)}
\newcommand{\mids}[1]{\mathcal{M}}
\newcommand{\mathemph}[1]{\boldsymbol{#1}}

\DeclareMathOperator{\fin}{end}

\newcommand{\ul}[1]{\underline{\smash{#1}}}

\newcommand{\ie}{i.\,e.,}
\newcommand{\eg}{e.\,g.}

\let\emptyset\varnothing

\newcommand\blfootnote[1]{%
  \begingroup
  \renewcommand\thefootnote{}\footnote{#1}%
  \addtocounter{footnote}{-1}%
  \endgroup
}

\newcommand\mcolor{red}
\newcommand\msays[1]{\textcolor{\mcolor}{\textbf{M:} #1}}
\newcommand\acolor{blue}
\newcommand\asays[1]{\textcolor{\acolor}{\textbf{A:} #1}}

\newcommand\TODO{\textcolor{red}{TODO}}

\newcommand{\subgr}{\subset}

\numberwithin{equation}{section}





\newcommand{\HexagonalCoordinates}[2]{
	\foreach \i in {0,...,#1}{
		\foreach \j in {0,...,#2}{
			\coordinate (A\i\j) at ($(2*\i,0)+(60:2*\j)$);
			\coordinate (D\i\j) at ($(A\i\j)+(4/3,0)+(60:4/3)$);
			\coordinate (U\i\j) at ($(A\i\j)+(2/3,0)+(60:2/3)$);
		}
	}
}


\author{
    Anna M.\ Limbach%
    \thanks{Faculty of Mathematics, Computer Science and Natural Sciences, RWTH Aachen University 52056 Aachen, Germany, email: limbach@math2.rwth-aachen.de}\hspace{1.7ex}and 
    Martin Winter%
    \thanks{Mathematics Institute, University of Warwick, Coventry CV4 7AL, United Kingdom \newline  email: martin.h.winter@warwick.ac.uk}
}

\title{Characterising Clique Convergence for Locally Cyclic Graphs of Minimum Degree $\mathemph{\delta\ge 6}$}

\newcommand{\thmA}{\hyperlink{thm:A}{Theorem A}}
\newcommand{\thmB}{\hyperlink{thm:B}{Theorem B}}

\newcommand{\note}[1]{\todo[color=lightgray]{#1}}

\begin{document}

\listoftodos	

\newpage
\todo[inline,color=green]{ToDos for last round of corrections: (done=lightgray)}
\todo[inline,color=lightgray]{Are the symbols consistent? subgraphs/subsets $\checkmark$, isomorphy $\checkmark$, neighbourhood $\checkmark$} 
\todo[inline,color=lightgray]{$k$-convergent $\to$ clique convergent, same with divergent. $\checkmark$} 
\todo[inline,color=lightgray]{Use the ``\ie'' command everywhere  $\checkmark$} 
\todo[inline,color=lightgray]{Which spelling do we use? - \asays{British with Oxford commas}}
\todo[inline,color=lightgray]{Do Sections and Theorems have the appropriate prerequisites? - \eg\ locally cyclic, connected, triangularly simply connected}
\todo[inline,color=lightgray]{Is information actually where we say it is, for example "in the next section, ..."}
\todo[inline,color=lightgray]{Make neighbourhoods sets and always use $\subseteq$, never $\le$ $\checkmark$}
\todo[inline,color=lightgray]{Check usage: use for cliques: $Q$ (and $R$); use for triangular-shaped graphs : $S$ and $T$. $\checkmark$}
\todo[inline,color=lightgray]{inline fracs to $a/b$ $\checkmark$}
\todo[inline,color=lightgray]{paths in simplicial complexes $\to$ (simplicial) walks $\checkmark$}
\todo[inline,color=lightgray]{are all definitions bold $\checkmark$}
\todo[inline,color=lightgray]{Make path notation consistent: $P_1\ldots P_h$ instead of $P_1,\ldots,P_h$.$\checkmark$}
\todo[inline,color=lightgray]{Check consistent use of ... and $\ldots$ $\checkmark$}
\todo[inline,color=lightgray]{replace $p:G\to H$ by $p\colon G\to H$ $\checkmark$}
\todo[inline,color=lightgray]{Double check whether Thm A and B are phrased optimally}
\todo[inline,color=lightgray]{Check whether Thm A and B are the same in both versions.}
\todo[inline,color=lightgray]{Check in which places the definition of triangularly simply connected needs to be referenced $\checkmark$}
\todo[inline,color=lightgray]{add MSC Classification}
\todo[inline,color=lightgray]{Change all titles to use capital letters $\checkmark$}
\todo[inline,color=lightgray]{Formating, formulas should not include linebreaks etc}
\todo[inline,color=lightgray]{ldots to ...}
\todo[inline,color=lightgray]{``walk equivalence'' or ``walk homotopy''?}
\todo[inline,color=lightgray]{Replace ``compact surface'' by ``closed surface''}
\todo[inline,color=lightgray]{Replace ``cycle'' by ``circle''}

\maketitle

\begin{abstract} 
The clique graph $kG$ of a graph $G$ has as its vertices the  cliques (maximal complete subgraphs) of 
$G$, two of which are adjacent in $kG$ if they have non-empty intersection in $G$.
We say that $G$ is \ccon\ if $k^nG\cong k^m G$ for some $n\not= m$, and that $G$ is \cdiv\ otherwise.

We completely characterise the \ccon\ graphs in the class of (not necessarily finite) locally cyclic graphs of minimum degree $\delta\ge 6$, showing that for such graphs clique divergence is a global phenomenon, dependent on the existence of large substructures.
More precisely, we establish that such a graph is \cdiv\ if and only if its universal triangular cover contains arbitrarily large members from the family of so-called ``\pyramids''.
%
\end{abstract}

\blfootnote{\textbf{Keywords:} iterated clique graphs, clique divergence, clique dynamics, locally cyclic graphs, hexagonal lattice, triangular graph covers, infinite graphs, triangulated surfaces.}
\blfootnote{\textbf{2010 Mathematics Subject Classification:} 05C69, 57Q15, 57M10, 37E25.}






\section{Introduction}

Given a (not necessarily finite) simple graph $G$, a \emph{clique} $Q\subseteq G$ is an inclusion~maxi\-mal complete subgraph. 
The \emph{clique graph} $\boldsymbol{kG}$ has as its vertices the cliques~of~$G$,\nolinebreak\space two~of which are adjacent in $kG$ if they have a non-empty intersection in $G$. 
The \mbox{operator~$\boldsymbol k$} is~known as the \emph{clique graph operator} and the behaviour of the sequence~$G$, $kG$, $k^2G,\ldots$ is the \emph{clique dynamics} of $G$.
The graph is \emph{\ccon} if the clique dynamics cycles eventually and it is \emph{\cdiv} otherwise.
It is an ongoing~endeavour to understand which graph properties lead to convergence and di\-vergence respectively, however, since clique convergence is known to be undecidable in general \cite{https://doi.org/10.1002/jgt.22622}, this investigation often restricts to certain graph classes, such as graphs of low degree \cite{villarroel2022clique}, circular arc graphs \cite{lin2010clique}, or locally $H$ graphs (\eg\ locally cyclic graphs \cite{larrion2000locally} or shoal graphs \cite{LARRION201686}).
%
%

The focus of the present article is on \emph{locally cyclic} graphs, that is, graphs for which the neighbourhood of each vertex induces a cycle.
Such graphs can be interpreted~as~tri\-angulations of surfaces (always to be understood as ``without boundary''),
and it was~recognized early that the study of their clique dyna\-mics can be informed by topological considerations.
%
%
%
So it is known~that each closed surface (\ie\ compact and without boundary) has a clique divergent triangulation \cite{larrion2006graph}, but that convergent triangulations exist on all closed surfaces of negative Euler characteristic \cite{larrion2003clique}.
It has furthermore been conjectured that there are no convergent triangulations on closed surfaces of non-negative Euler characteristic (for a precise statement one requires minimum degree $\delta\ge 4$; see \cref{conj:non_neg_Euler_diverges}).
For example, the 4-regular \cite{escalante1973iterierte} and 5-regular \cite{pizana2003icosahedron} triangulations of the sphere (\ie\ the octahedral and icosahedral graph) are \cdiv; as is any 6-regular triangulation of the torus or Klein bottle \cite{larrion1999clique,larrion2000locally}.\nolinebreak\space
On the other hand, a triangulation of minimum degree $\delta\ge 7$ (necessarily of a closed sur\-face of higher genus)
 is \ccon\ \cite{larrion2002whitney}. 
Triangulations that mix degrees above and below six are still badly understood.

Baumeister \& Limbach \cite{BAUMEISTER2022112873} broadened these investigations to triangulations of non-compact surfaces, that is, to infinite locally cyclic graphs.
They gave an explicit description of $k^n G$ in terms of so-called \emph{\subpyramids} of $G$ (see \cref{Fig_Delta_m}), where $G$ is a triangulation of minimum degree $\delta\ge 6$ of a (not necessarily compact)~simply connected surface 
(see \cref{sec:recall_old_paper} for details).

\begin{figure}[ht!]
	\centering
	\includegraphics[width=0.65\textwidth]{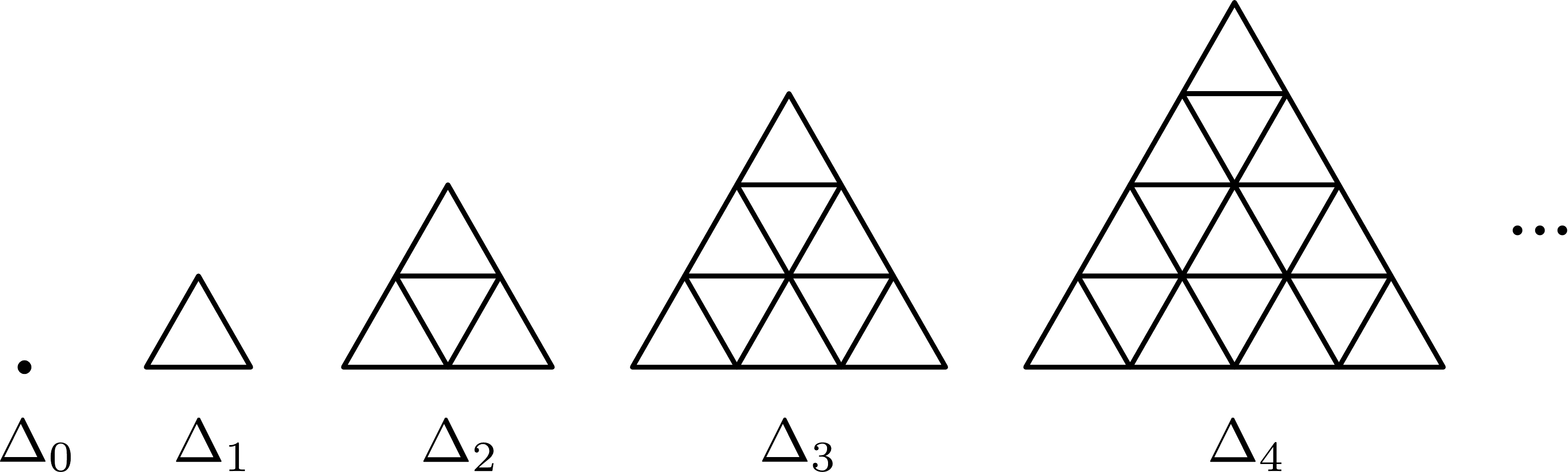}
	\caption{The \pyramids\ $\Delta_m$ for $m\in \{0,\ldots,4\}$.} 
	\label{Fig_Delta_m}
\end{figure}

The goal of this article is to bring the investigation of \cite{BAUMEISTER2022112873} to a satisfying conclusion: we apply their explicit construction of $k^n G$ to completely characterise the \ccon\ triangulations in the class of (not necessarily finite) locally cyclic graphs of minimum degree $\delta\ge 6$.\nolinebreak\space
We thereby answer the open questions from Section 9 of \cite{BAUMEISTER2022112873}. 

Our first main result concerns locally cyclic graphs that are \emph{triangularly simply~connected}, that is, they correspond to triangulations of simply connected surfaces (see~\cref{sec:proof_of_B_covers} for a rigorous definition).
We identify the clique divergence of these graphs as a consequence of the existence of arbitrarily large \subpyramids. 


\begin{theoremX}{A}[Characterisation theorem for triangularly simply connected graphs]
	\label{thm:A}
	\hypertarget{thm:A}
A triangularly simply connected locally cyclic graph of minimum degree $\delta\geq 6$ is \cdiv\ if and only if it contains arbitrarily large \subpyramids.
\end{theoremX} 


The difficulty in proving \thmA\ lies in establishing divergence for a sequence~of \textit{infinite} graphs.
Divergence is usually shown by observing the~divergence of some~numerical graph parameter, such as the vertex count or graph diameter.
As our graphs~are potentially infinite, this fails since the straightforward quantities might be infinite to~begin with. 
The quest then lies mainly in identifying an often more contrived graph~invar\-iant which is still finite yet unbounded.

As a consequence of \thmA\ we find that the 6-regular triangulation of the Euclidean plane (aka.\ 
the hexagonal lattice) is \cdiv.

By applying \thmA\ to the  universal triangular cover
(see  \cref{sec:proof_of_B_covers}), we obtain the following more general result.



\begin{theoremX}{B}[General characterisation theorem]
	\label{thm:B}
	\hypertarget{thm:B}
A (not necessarily finite) connected locally cyclic graph of minimum degree $\delta\geq 6$ is \cdiv\ if and only if its universal triangular cover contains arbitrarily large \subpyramids.
\end{theoremX}


The ``only if'' direction of \thmB\ was supposedly proven in \cite{BAUMEISTER2022112873}, but the proof~contains a gap, which we close in \cref{sec:proof_of_B}.


As a consequence of \thmB, a triangulation of minimum degree $\delta\ge 6$ of a closed surface is clique divergent if and only if it is 6-regular (cf.\ \cite[Lemma 8.10]{BAUMEISTER2022112873}).

We mention two further recent results on clique dynamics that are in a similar spirit. 
In 2017, Larrión, Piza\~na, and Villarroel-Flores \cite{larrion2017strong} showed that the clique operator preserves (finite) triangular graph bundles, which are a generalisation of finite triangular covering maps.
Also, just recently in 2022, Villarroel-Flores \cite{villarroel2022clique} showed that among the (finite) connected graphs with maximum degree at most four, the octahedral graph is the only one that is \cdiv.

%
%
%
%
%
%

{\color{lightgray}

}

\subsection{Structure of the Paper}

In \cref{sec:notation_background}, we recall the fundamental concepts and notations used throughout the~paper.
In particular, in \cref{sec:recall_old_paper} we recall the geometric clique graph $G_n$ and the relevant statements of \cite{BAUMEISTER2022112873} that established the explicit description of $k^n G$ in terms of $G_n$.

In \cref{sec:proof_of_A}, we prove \thmA.
To show that a sequence of infinite graphs~is~divergent, we identify a finite yet unbounded graph invariant $D(H)$ (see \eqref{eq:invariant}) based on the distribution of vertices of degree 26.

In \cref{sec:proof_of_B}, we prove \thmB.
We extend the divergence results of \cref{sec:proof_of_A} to graphs that are not necessarily triangularly simply connected by exploiting that covering relations interact well with the clique operator and the geometric clique graph.

\cref{sec:conclusions} summarizes the results and lists related open questions.

We also include an appendix which recalls helpful background theory for \cref{sec:proof_of_B}. \cref{appendix_a} gives a proof that triangular simple connectivity is preserved under the clique operator while \cref{appendix_b} focuses on the existence and uniqueness of a triangularly simply connected triangular cover for any connected graph.

\section{Notation and Background}
\label{sec:notation_background}

\subsection{Basic Notation}


All graphs in this article are simple, non-empty and potentially infinite.
If not stated otherwise, they are connected and locally finite.
For a graph $G$ we write $V(G)$ and $E(G)$ to denote its vertex set and edge set, respectively.
The adjacency relation is denoted by $\sim$.
We define the closed and the open neighbourhood of a set $U\subseteq V(G)$ of vertices as
\begin{align*}
N_G[U]&\coloneqq \{v\in V(G)\mid \text{$v\in U$ or $v\sim w$ for some $w\in U$}\}\text{  and}\\ N_G(U)&\coloneqq \{v\in V(G)\mid \text{$v\not\in U$ and $v\sim w$ for some $w\in U$}\},
\end{align*}
respectively. 
For $v\in V(G)$, we write $N_G[v]$ instead of $N_G[\{v\}]$ and $N_G(v)$ instead of $N_G(\{v\})$. 
We write $\deg_G(v)\coloneqq|N_G(v)|$ for the degree of $v$, and $\dist_G(v,w)$ for the graph-theoretic distance between two vertices $v,w\in V(G)$.
For $v\in V(G)$ and $U,U'\subseteq V(G)$ we write
$$\dist_G(v,U)\coloneqq \min_{w\in U} \dist_G(v,w)\quad \text{and}\quad \dist_G(U,U')\coloneqq \min_{\mathclap{w\in U,w'\in U'}}\dist_G(w,w').$$
We write $G$-degree, $G$-neighbourhood, or $G$-distance to emphasize the graph with respect to which these quantities are computed. 
Finally, we use $\cong$ to denote isomorphy between graphs.

We write $\N\coloneqq \{1,2,3,\ldots\}$ and $\N_0\coloneqq \N\cup\{0\}$ for the sets of natural numbers without and with zero. We write $k\N$ and $k\N_0$ to denote multiples of $k$.

\subsection{Cliques, Clique Graphs, and Clique Dynamics}

A \emph{clique} in $G$ is an inclusion maximal complete subgraph.
The \emph{clique graph} $kG$ has vertex set $V(kG)\coloneqq \{\text{cliques of $G$}\}$, and distinct cliques $Q,Q'\in V(kG)$ are adjacent in $kG$ if they have vertices in common.
We consider $k$ as an operator, the \emph{clique graph operator}, mapping a graph to its clique graph.
By $k^n$, we denote its $n$-th iterate.

A sequence $G^0,G^1,G^2,\ldots$ of graphs is said to be \emph{convergent} if it is eventually periodic, that is, if for some $r\in\N$ and all sufficiently large $n\in\N$ we have $G^n\cong G^{n+r}$. 
The sequence is said to be \emph{di\-vergent} otherwise.
A graph $G$ is said to be \emph{\ccon} if the sequence $k^0 G,$ $k^1 G,$ $k^2 G,\ldots$ is convergent, and is called \emph{\cdiv}~otherwise.


%



\subsection{Locally Cyclic Graphs, Triangular-Shaped Subgraphs, and the Geometric Clique Graph}
\label{Sect_Hexagonal}
\label{sec:recall_old_paper}


A graph $G$ is \emph{locally cyclic} if the (open) neighbourhood of each vertex induces a cycle.
In particular, a locally cyclic graph is locally finite.
Such graphs can also be interpreted as triangulations of surfaces. 
We shall however use this geometric perspective~only~informally, and work with the purely graph theoretic definition given above.
A fundamental example of a locally cyclic graph is the hexagonal triangulation of the Euclidean plane.

We use the class of \emph{triangular-shaped graphs} $\mathemph {\Delta_m}$ from \cite{BAUMEISTER2022112873}, which are subgraphs of the hexagonal lattice, and the smallest five of which are depicted in \cref{Fig_Delta_m}. 
The parameter $m$ is called the \emph{side length} of $\Delta_m$, and the boundary $\mathemph{\partial \Delta_m}$ is the subgraph of $\Delta_m$ that consists of the vertices of degree less than six and the edges that lie in only a single triangle (\cref{fig:pyramid_boundary}).

\begin{figure}[ht!]
	\centering
	\includegraphics[width=0.43\textwidth]{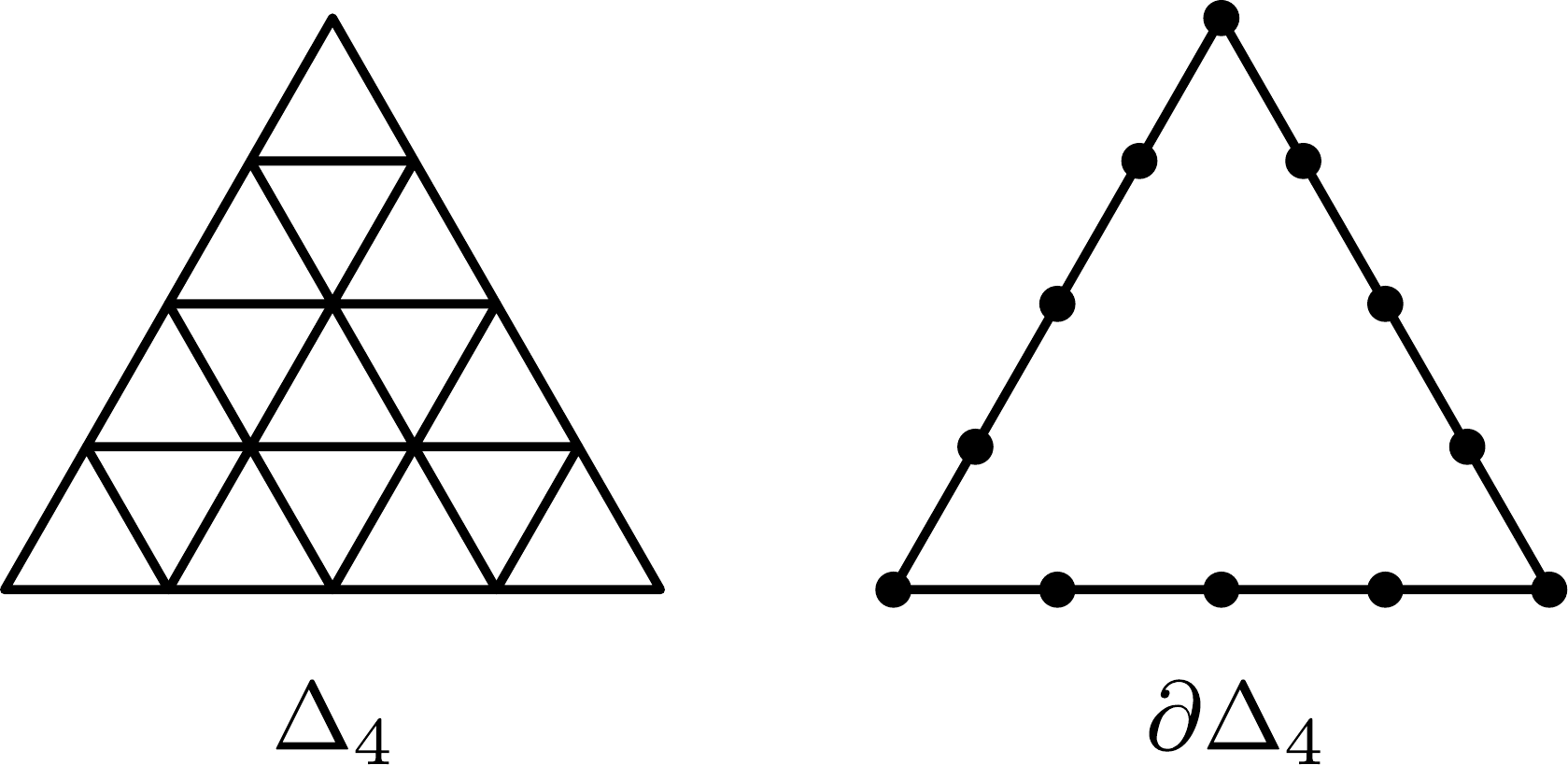}
	\caption{The \pyramid\ $\Delta_4$ and its boundary $\partial\Delta_4$.}
	\label{fig:pyramid_boundary}
\end{figure}

In \cite{BAUMEISTER2022112873}, it was shown that the $n$-th iterated clique graph $k^n G$ of a triangularly simply connected locally cyclic graph $G$ of minimum degree $\delta\geq 6$ (also called ``pika'' in \cite{BAUMEISTER2022112873})
can be explicitly constructed based on \subpyramids\ of $G$ (see \cref{Def_theCliqueGraph} and \cref{res:structure_theorem} below).
Hereby ``triangularly simply connected'' means ``triangulation of a simply connected surface'', but a precise definition is postponed until \cref{sec:universal_covers} (or see \cite{BAUMEISTER2022112873}).
For now it suffices to use this terms as a black box, merely to apply \cref{res:structure_theorem}. Note however that such a graph is in particular connected.

%
%

The explicit construction of $k^n G$ is captured by the following definition:


\begin{defi}[{\cite[Definition 4.1]{BAUMEISTER2022112873}}]\label{Def_theCliqueGraph}
	Given a triangularly simply connected locally cyclic graph $G$ of minimum degree $\delta \ge 6$, its 
	\emph{$\mathemph{n}$-th geometric clique graph} $\mathemph{G_n}$ ($n\ge 0$) has the following form:
	\begin{enumerate}[label=(\roman*)]
		\item the vertices of $G_n$ are the \subpyramids\ of $G$ of side length $m\le n$ with $m\equiv n\pmod 2$.
		\item two distinct \subpyramids\ $S_1\cong \Delta_m$ and $S_2\cong \Delta_{m+s}$ with $s\ge 0$ are adjacent in $G_n$ if and only if any of the following applies: 
		\begin{enumerate}[label=\alph*.]
			\item $s=0$ and $S_1 \subgr \neig{G}{S_2}$ (or equivalently, $S_2 \subgr \neig{G}{S_1}$).
			\item $s=2$ and $S_1 \subgr S_2$.
			\item $s=4$ and $S_1\subset S_2\setminus\partial S_2$.
			\item $s=6$ and $S_1=S_2\setminus N_G[\partial S_2]$. 
		\end{enumerate}
	\end{enumerate}
\end{defi}	 

Note that $G_0\cong G$. We then have

\begin{theo}[{\cite[Theorem 6.8 + Corallary 7.8]{BAUMEISTER2022112873}}]
\label{res:structure_theorem}
If $G$ is locally cyclic, triangularly simply connected and of minimum degree $\delta\ge 6$, then $G_n\cong k^n G$ for all $n\in\N_0$.
\end{theo}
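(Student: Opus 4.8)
The plan is to prove the statement by induction on $n$, isolating all the combinatorial content in the single claim that $k(G_n)\cong G_{n+1}$ for every $n\ge 0$. Granting this, the theorem follows cleanly: the base case $G_0\cong G=k^0G$ is immediate from \cref{Def_theCliqueGraph}, since for $n=0$ the only admissible side length is $m=0$, so the vertices of $G_0$ are the single vertices of $G$, and rule~(a) reads $\{v\}\subseteq N_G[\{w\}]$, which for distinct $v,w$ is just $v\sim w$; and the inductive step combines the hypothesis $G_n\cong k^nG$ with the fact that $k$ respects isomorphism to give $k^{n+1}G=k(k^nG)\cong k(G_n)\cong G_{n+1}$. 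Thus it suffices to compute the clique graph of the combinatorially defined graph $G_n$ and identify it with $G_{n+1}$.

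First I would determine the cliques of $G_n$. The key observation is that the four adjacency rules encode concentric nesting of \subpyramids\ with a controlled gap in side length: two vertices of $G_n$ can be adjacent only if their side lengths differ by at most $6$, so any complete subgraph of $G_n$ consists of \subpyramids\ whose side lengths lie in a window $\{m,m+2,m+4,m+6\}$ and which are mutually inscribed. I would show that a maximal such mutually-inscribed family is precisely the set of all level-$n$ \subpyramids\ inscribed, at the appropriate concentric offset, in a unique minimal enclosing \subpyramid\ $T\cong\Delta_{m'}$, and that $m'$ is forced to satisfy $m'\le n+1$ and $m'\equiv n+1\pmod2$. Conversely, to each such $T$ at level $n+1$ one associates the family $K(T)$ of all level-$n$ \subpyramids\ it inscribes, and one checks directly from the rules that $K(T)$ is complete and maximal, hence a clique. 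This yields a bijection $T\mapsto K(T)$ between $V(G_{n+1})$ and the cliques of $G_n$; the delicate points are the existence and uniqueness of the minimal enclosing $T$ (a Helly-type/consistency property of the nesting relation) and the verification of the correct parity and bound on its side length.

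It remains to match adjacency. Two cliques $K(T_1)$ and $K(T_2)$ are adjacent in $k(G_n)$ exactly when they share a vertex of $G_n$, i.e.\ when $T_1$ and $T_2$ inscribe a common level-$n$ \subpyramid. Writing $s=|m'_2-m'_1|$, I would run a case analysis over $s\in\{0,2,4,6\}$ and show that sharing an inscribed level-$n$ piece is equivalent to the corresponding condition (a)--(d) of \cref{Def_theCliqueGraph} for $\Delta_{m'_1}$ and $\Delta_{m'_2}$: for $s=2$ a common piece forces $T_1\subseteq T_2$, for $s=4$ it forces containment in the interior $T_2\setminus\partial T_2$, and for $s=6$ it forces $T_1=T_2\setminus N_G[\partial T_2]$, while for $s>6$ no common inscribed piece can exist. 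Establishing these equivalences rests on the concrete geometry of \subpyramids\ in the hexagonal lattice.

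The main obstacle I anticipate is the clique-identification step: proving that \emph{every} clique of $G_n$ arises from a unique minimal enclosing \subpyramid\ of the correct side length and parity. This requires upgrading pairwise inscribability of a family to the existence of a common enclosing \subpyramid, ruling out spurious complete subgraphs not of the form $K(T)$, and controlling boundary effects so that the offsets in rules (c) and (d) come out exactly as stated. The adjacency verification for the larger offsets $s=4,6$, where one must track $\partial T$ and $N_G[\partial T]$ precisely, is the most error-prone part of the case analysis.
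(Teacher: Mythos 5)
Your high-level skeleton is sound and is in fact the route taken by the source of this theorem: reduce by induction to the single claim $k(G_n)\cong G_{n+1}$, then prove that claim by identifying the cliques of $G_n$ with the vertices of $G_{n+1}$ and matching adjacencies. Note that the present paper does not reprove the statement at all --- it imports it from \cite{BAUMEISTER2022112873} --- but it does recall the explicit isomorphisms $C_{n+1}\colon G_{n+1}\to kG_n$ in \cref{Cor_CliqueSummary} of \cref{appendix_c}, and your base case ($G_0\cong G$) and the formal inductive reduction agree with that setup.

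The genuine gap is that your central combinatorial claim --- that a clique of $G_n$ is the set of \emph{all} level-$n$ \subpyramids\ \emph{inscribed in} a unique minimal enclosing \subpyramid\ $T\cong\Delta_{m'}$ with $m'\equiv n+1\pmod 2$ --- is false, and not merely ``delicate''. First, even the premise that a complete subgraph of $G_n$ is a ``mutually inscribed'' family fails: equal side-length members of a clique (e.g.\ the three corner copies of $\Delta_{m-1}$ inside a $\Delta_m$) are type-$0$ adjacent via $S_1\subseteq N_G[S_2]$ but are not nested in one another. Second, by \cref{Cor_CliqueSummary}(a) the clique indexed by $T\cong\Delta_m\in V(G_{n+1})$ is a \emph{two-sided} tower around $T$: besides the corner $\Delta_{m-1}$'s and the interior $T\setminus\partial T\cong\Delta_{m-3}$, it contains up to three copies of $\Delta_{m+1}$ \emph{containing} $T$ in a corner and up to one $\Delta_{m+3}$ with $T=\Delta_{m+3}\setminus\partial\Delta_{m+3}$; these members are not inscribed in $T$, and $T$ itself is not a member (it has the wrong parity). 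Third, your recipe collapses in the case $T\cong\Delta_0$: ``all level-$n$ pyramids inscribed in a single vertex $v$'' is empty, whereas the true clique is the star of all triangles containing $v$ (plus, for $n\ge 3$, the $\Delta_3$'s having $v$ as interior vertex, cf.\ \cref{Cor_CliqueSummary}(b)); already for the hexagonal lattice and $n=1$ the cliques of $G_1\cong kG$ are the vertex stars and the four-triangle interiors of $\Delta_2$'s, and the vertex stars have no enclosing \subpyramid\ of even side length that is unique or minimal in your sense. Fourth, in the other direction your set $K(T)$ is not even complete for larger $T$: the corner triangles of a $\Delta_4\in V(G_4)$ are pairwise disjoint vertices of $G_3$. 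Since your adjacency analysis is then phrased entirely through the wrong criterion (``$T_1$ and $T_2$ inscribe a common level-$n$ piece''; in reality cliques may also intersect in members lying \emph{above} both indexing pyramids), the proof as proposed does not go through; repairing it essentially amounts to replacing your one-sided ``inscribed'' model by the two-sided tower description of \cref{Cor_CliqueSummary}.
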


We refer to the four types of adjacencies listed in \cref{Def_theCliqueGraph} as adjacencies of~type $0,\pm2,\pm4$ and $\pm 6$ respectively.
For a \pyramid\ $S\in V(G_n)$ of side length $m$, we refer to a neighbour $T\in N_{G_n}(S)$ of side length $m+s$ as being of type $s\in\{-6,$ $-4,-2, 0,+2,+4,+6\}$.
%
%
Some visualisations for the various configurations of \pyramids\ that correspond to adjacency in $G_n$ can be seen in \cref{fig:large_enough_cases,fig:small_exceptions,fig:m0_cases} in the next section.

The following example demonstrates how \cref{res:structure_theorem} can be used to establish clique convergence in non-trivial cases: 

\begin{ex}
A locally cyclic and triangularly simply connected graph $G$ of minimum degree $\delta\ge 7$ does not contain any \pyramids\ of side length $\ge 3$ (because such have vertices of degree six).
Hence, $k^n G\cong G_n= G_{n+2}\cong k^{n+2} G$ whenever $n\ge 1$. Such a graph $G$ is therefore \ccon.
\end{ex}

\section{Proof of Theorem A}
\label{sec:proof_of_A}

Throughout this section, we assume that $G$ is a locally cyclic graph that is triangularly simply connected and has minimum degree $\delta\geq 6$.
We can then apply \cref{res:structure_theorem} and investigate the dynamics of the sequence of geometric clique graphs $G_n$ in place of $k^n G$.

One direction of \thmA\ follows immediately from the definition of the geometric clique graph (\cref{Def_theCliqueGraph}) together with \cref{res:structure_theorem}.
We make a remark for~later~reference:

\begin{rem}
\label{rem:one_direction}
If all \subpyramids\ of $G$ are of side length $\le m\in 2\N$, then $G_m\cong G_{m+2}$, that is, the sequence cycles, and $G$ is clique convergent by \cref{res:structure_theorem}.
This reasoning can also be found in \cite[Theorem 7.9]{BAUMEISTER2022112873}.
\end{rem}

The remainder of this section is devoted to proving the other direction of \thmA: if $G$ contains arbitrarily large \subpyramids, then $G$ is \cdiv.
For this, we identify a graph invariant that is both finite and unbounded for the sequence $G_n$ as $n\to\infty$, as long as $G$ contains arbitrarily large \subpyramids.
It turns out that a suitable graph invariant can be built from measuring distances between vertices  of certain degrees. Curiously, the degree 26 plays a special role, and the following notation comes in handy:
%
%
%
%
\begin{align*}
\mathemph{\textsc{\textbf{deg}}_{26}(H)}\coloneqq \{v\in V(H)\mid \deg_H(v)=26\}\\
\mathemph{\overline{\textsc{\textbf{deg}}}_{26}(H)}\coloneqq \{v\in V(H)\mid \deg_H(v)\not=26\}
\end{align*}
The corresponding graph invariant is the following:
\begin{equation}
	\label{eq:invariant}
\mathemph{D(H)}\coloneqq \max_{\mathclap{\substack{\\v\in V(H)}}}\, \dist_{H}\!\big(v,\irr{H}\big).
	\end{equation}

The significance of the number 26 stems from the observation that most vertices of $G_n$ have $G_n$-degree $\le 26$; and have $G_n$-degree \textit{exactly} 26 only in very special circumstances that can be expressed as the existence of certain \subpyramids\ in $G$. This is proven in \cref{res:m_ge_6_degrees_26_iff} and \cref{res:m_eq_0_degrees_26_if}. 
Finitude and divergence of $D(G_n)$ as $n\to\infty$ are proven afterwards in \cref{lem_distanceupperbound} and \cref{lem_distancelowerbound}.







In the following, we generally consider $G_n$ only for even $n\in 2\N$, as this cuts down on the cases we need to investigate, and is still sufficient to show that $D(G_n)$ is unbounded.
Note that each $S\in V(G_n)$ is then of even side length $m\in\{0,2,4,6,\ldots\}$. 

\begin{lem}
\label{res:m_ge_6_degrees_26_iff}
Let $S\in V(G_n)$ be a \pyramid\ of side length $m\ge 6$. Then $\deg_{G_n}(S)\le 26$, with equality if and only if $S$ has a neighbour of type $+6$.
\end{lem}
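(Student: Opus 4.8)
The plan is to count the neighbours of $S$ grouped by their type $s\in\{-6,-4,-2,0,+2,+4,+6\}$ and to show that the total never exceeds $26$, with the extremal value forced exactly by the presence of a type $+6$ neighbour. First I would record the evident duality in \cref{Def_theCliqueGraph}: a \subpyramid\ $T$ of side length $m+s$ is a type $+s$ neighbour of $S$ precisely when $S$ is a type $-s$ neighbour of $T$. This turns every count into a question of how one triangular-shaped graph can sit inside (the interior of) another, which is pure hexagonal-lattice combinatorics. Fixing an orientation for $S$, I would count sub-configurations using the standard fact that an (up-)triangle of side $a$ contains $\binom{a-b+2}{2}$ up-triangles of side $b\le a$ and $\binom{a-2b+2}{2}$ down-triangles; the hypothesis $m\ge 6$ is exactly what makes all the opposite-orientation counts vanish, which is why the statement is phrased for $m\ge 6$.

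Carrying this out, the negative types are always present and contribute a fixed amount: the double interior $S\setminus N_G[\partial S]$ is the unique type $-6$ neighbour; the $3$ copies of $\Delta_{m-4}$ inside $S\setminus\partial S\cong\Delta_{m-3}$ give type $-4$; and the $6$ copies of $\Delta_{m-2}$ inside $S$ give type $-2$. These $10$ neighbours are genuine subgraphs of $S\subseteq G$, so they really occur. For the non-negative types I would prove the upper bounds $6,6,3,1$ for types $0,+2,+4,+6$ respectively. The type-$0$ neighbours are the (at most $6$) unit translates of $S$ (each vertex of a translate is adjacent to the corresponding vertex of $S$, so a translate lies in $N_G[S]$), and the bounds for $+2,+4,+6$ follow from the dual counts: $6$ super-triangles of side $m+2$ through a fixed $\Delta_m$, $3$ admissible interiors $\Delta_{m+1}\supseteq S$ each determining a unique $\Delta_{m+4}$, and a single centred $\Delta_{m+6}$. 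Summing gives $\deg_{G_n}(S)\le 10+6+6+3+1=26$.

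The equivalence then drops out of the arithmetic. If $S$ has no type $+6$ neighbour, the same bounds yield $\deg_{G_n}(S)\le 10+6+6+3+0=25<26$, which proves the implication ``equality $\Rightarrow$ type $+6$ neighbour''. For the converse, suppose a type $+6$ neighbour $T\cong\Delta_{m+6}$ exists, so that $S=T\setminus N_G[\partial T]$ is its double interior and is therefore inset two layers inside the hexagonal patch $T$. I would then verify that each of the $6+6+3$ candidate type-$0$, $+2$, and $+4$ neighbours enumerated above is contained in $T$ (the translates lie within one layer of $S$, and the super-triangles of side $m+2$ and $m+4$ extend $S$ by at most two layers), hence each is a genuine \subpyramid\ of $G$. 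Thus all non-negative slots are filled and $\deg_{G_n}(S)=26$.

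The main obstacle is justifying the upper bounds in $G$ itself rather than in the full hexagonal lattice: since $G$ only has minimum degree $\delta\ge 6$, it may carry vertices of degree $>6$ near $\partial S$, and I must rule out that such ``branching'' manufactures extra super-triangles through $S$. The point I would exploit is that any super-triangle $T\supseteq S$ is a sub-triangulation whose interior vertices — which include all interior vertices of $S$ — have degree exactly $6$ in $G$ by local cyclicity; a vertex of degree $>6$ can therefore never be interior to such a $T$, so branching can only obstruct, never create, these configurations. This forces every super-triangle to align with the rigid hexagonal structure already pinned down by the interior of $S$, so the counts collapse to their lattice values. The remaining work is the finite coordinate check that the type-$+4$ configurations genuinely fit inside the witness $T$ in the converse direction.
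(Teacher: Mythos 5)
Your proposal is correct and takes essentially the same route as the paper: classify the neighbours of $S$ by type, obtain the split $26=(1+3+6)+(6+6+3+1)$ across types $-6,-4,-2$ and $0,+2,+4,+6$, note that a missing type $+6$ neighbour caps the degree at $25$, and conversely realize all remaining configurations as subgraphs of the type $+6$ neighbour $T$. The only substantive difference is presentational: where the paper verifies the configuration count by inspecting \cref{fig:large_enough_cases}, you carry out the hexagonal-lattice counting explicitly (including why twisted inclusions vanish for $m\ge 6$) and add the correct --- and genuinely needed --- rigidity observation that interior vertices of a \subpyramid\ have $G$-degree exactly $6$ by local cyclicity, a point the paper leaves implicit in its figures.
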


\Cref{res:m_ge_6_degrees_26_iff} actually holds unchanged for $m\ge 2$.
Since we do not need these cases to prove \thmA, and since verifying them requires a distinct case analysis (because of ``twisted adjacencies'', cf.\ \cref{fig:small_exceptions}), we do not include them here. 


\begin{proof}[Proof of \cref{res:m_ge_6_degrees_26_iff}]
%
\Cref{fig:large_enough_cases} shows all potential configurations of $S$ and a $G_n$-neighbour of $S$ according to \cref{Def_theCliqueGraph} (here we need $m\ge 6$, as there are exceptional ``twisted adjacencies'' for smaller $m$, see \cref{fig:small_exceptions}). 
In total this amounts to a degree of at most 26. 
In particular, if just one of the neighbours is missing, say the neighbour of type $+6$, then $S$ must have a $G_n$-degree of less than 26. 

Conversely, one can verify that if $S$ has a neighbour of type $+6$, say $T\in N_{G_n}(S)$, then all other neighbours of types $-6,-4,-2,0,+2$, and $+4$ can be found as subgraphs of $T$. Therefore, all 26 neighbours are present and the degree is 26.
\end{proof}

\begin{figure}[ht!]
\centering
\includegraphics[width=1.0\textwidth]{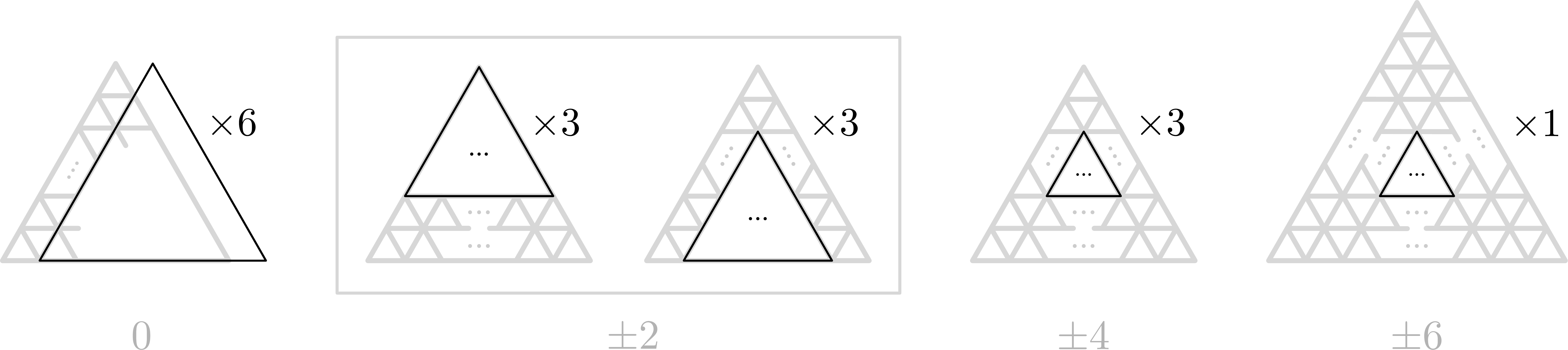}
\caption{The 26 possible ways in which a \pyramid\ $S\in V(G_n)$ of side length $m\ge 6$ can be $G_n$-adjacent to another \pyramid\ $T\in V(G_n)$ of side length $m+s$, where $s\in\{-6,-4,-2,0,+2,+4,+6\}$. Two configurations may differ merely by a symmetry (one of the six ``reflections'' and ``rotations'' of a \pyramid), and we always show only a single configuration with the multiplication factor next to it indicating the number of equivalent configuration related by symmetry. Note that for the types $\pm2$, $\pm4$ and $\pm6$, the configurations must be accounted for twice in the $G_n$-degree of $S$: once with $S$ being the larger graph (in grey), and once with $S$ being the smaller graph (in black). Then $26=6+2\cdot(3+3+3+1)$.}
\label{fig:large_enough_cases}
\end{figure}

\begin{figure}[ht!]
\centering
\includegraphics[width=0.25\textwidth]{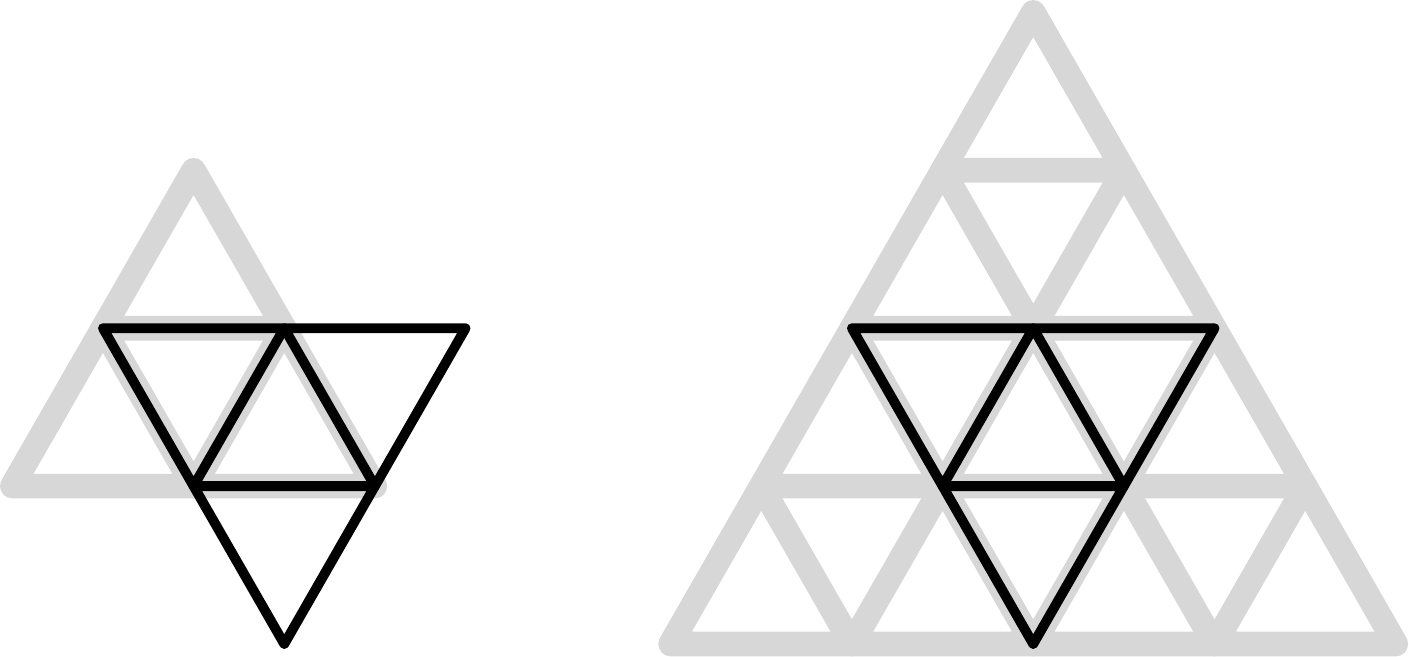}
\caption{For $m\in\{2,4\}$, there also exist the following ``twisted adjacencies''.}
\label{fig:small_exceptions}
\end{figure}

For $m=0$ only one direction holds, which is also sufficient for our purpose.

\begin{lem}
\label{res:m_eq_0_degrees_26_if}
Let 
$n\in 2\N$ and $s\in V(G_n)$ be a \pyramid\ of side length $m=0$ (that is, $s$ is a vertex of $G$). 
If $s$ has no $G_n$-neighbour of type $+6$, then $\deg_{G_n}(s)\not=26$.
\end{lem}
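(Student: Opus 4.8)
The plan is to mirror the neighbour enumeration behind \cref{res:m_ge_6_degrees_26_iff}, but adapted to side length $m=0$. By \cref{Def_theCliqueGraph} a vertex $s=\{v\}$ of $G_n$ can only have neighbours of types $0,+2,+4$ and $+6$, since a negative type would require a \subpyramid\ of negative side length. Writing $c_t$ for the number of type-$t$ neighbours of $s$, we have $\deg_{G_n}(s)=c_0+c_2+c_4+c_6$, where $c_6=0$ by hypothesis and $c_0=\deg_G(v)$, the latter because the type-$0$ neighbours of $\{v\}$ are precisely the $G$-neighbours of $v$.

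The first step I would take is a structural reduction: a type-$+4$ neighbour forces $\deg_G(v)=6$, and the same holds for a type-$+6$ neighbour. Indeed, if $v$ lies in the interior $T\setminus\partial T$ of a \subpyramid\ $T\cong\Delta_4$ (respectively at the centre $T\setminus N_G[\partial T]$ of a $T\cong\Delta_6$), then all six $G$-neighbours of $v$ already lie in $T$ and form a $6$-cycle around $v$. Since $G$ is locally cyclic, $N_G(v)$ is an induced cycle, and an induced cycle admitting a $6$-cycle as a subgraph must itself be that $6$-cycle; hence $\deg_G(v)=6$. In particular, $\deg_G(v)\neq 6$ implies $c_4=c_6=0$.

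Next I would record two elementary bounds. A fixed vertex $v$ is a corner of at most one \subpyramid\ $\cong\Delta_2$ per face at $v$, and a midpoint of at most one per window of three consecutive faces, so $c_2\le 2\deg_G(v)$; moreover $v$ can occupy only the three interior positions of a $\Delta_4$ in its two orientations, so $c_4\le 6$. If $\deg_G(v)=6$ this already gives $\deg_{G_n}(s)=6+c_2+c_4\le 6+12+6=24<26$. If $\deg_G(v)\in\{7,8\}$, then $c_4=c_6=0$ and $\deg_{G_n}(s)=\deg_G(v)+c_2\le 3\deg_G(v)\le 24<26$. In both situations $\deg_{G_n}(s)\neq 26$.

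The hard part is the high-degree range. Here I would complement the upper bound $c_2\le 2\deg_G(v)$ with a lower bound coming from the midpoint \subpyramids: at essentially every window of three consecutive faces at $v$ one finds a \subpyramid\ $\cong\Delta_2$ with $v$ as its midpoint, giving roughly $c_2\gtrsim\deg_G(v)$ and hence $\deg_{G_n}(s)\gtrsim 2\deg_G(v)$. Together with $\deg_{G_n}(s)\le 3\deg_G(v)$ this confines any vertex with $\deg_{G_n}(s)=26$ to a short range of high degrees $\deg_G(v)\in\{9,\dots,13\}$. It then remains to rule out these finitely many possibilities by a direct count of the corner and midpoint \subpyramids, based on \cref{fig:m0_cases}, tracking how each local failure of flatness around $v$ removes several of them at once, so that $c_2$ skips the single value $26-\deg_G(v)$ needed for the total to equal $26$. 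This final bookkeeping---rather than the reductions above, which are routine---is the delicate core of the argument.
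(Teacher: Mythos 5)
Your reductions are sound as far as they go: the restriction to types $0,+2,+4,+6$, the identity $c_0=\deg_G(v)$, the observation that a type $+4$ or $+6$ neighbour forces $\deg_G(v)=6$, and the disposal of the cases $\deg_G(v)\in\{6,7,8\}$ via the upper bounds $c_2\le 2\deg_G(v)$, $c_4\le 6$ all match what the paper does. But the proof is not complete: for $\deg_G(v)\ge 9$ you only have the \emph{upper} bound $c_2\le 2\deg_G(v)$, which leaves the value $c_2=26-\deg_G(v)$ entirely possible, and your plan to exclude it --- a heuristic lower bound ``$c_2\gtrsim\deg_G(v)$'' confining the problem to $\deg_G(v)\in\{9,\dots,13\}$, followed by a case-by-case ``bookkeeping'' of how failures of flatness remove \subpyramids\ --- is never carried out. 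You explicitly label that bookkeeping ``the delicate core of the argument'' and then stop, so the high-degree range remains unproven. Worse, the framing suggests the count $c_2$ can genuinely vary with the local geometry, which would make the case analysis hard; that is not the situation.

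The missing idea, which is the actual core of the paper's proof, is that $c_2$ is \emph{exactly} $2\deg_G(s)$ for every vertex $s$, independent of the specifics of $G$. The paper proves this by a double count over ``$r$-chains'' $s\subset\Delta\subset T$, where $\Delta$ is an $s$-incident triangle and $T$ is a type $+2_r$ neighbour ($r\in\{2,4\}$ being the $T$-degree of $s$): each type $+2_r$ neighbour extends to exactly $n_r$ such chains ($n_2=1$, $n_4=3$), and each $s$-incident triangle \emph{also} extends to exactly $n_r$ such chains (this uses local cyclicity and minimum degree $\ge 6$; see \cref{fig:m0_2_cases}). Hence the number of type $+2_r$ neighbours equals the number of $s$-incident triangles, which is $\deg_G(s)$, for each $r$. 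With this exactness the whole lemma collapses to one line: if $\deg_G(s)\ne 6$ then $\deg_{G_n}(s)=\deg_G(s)+2\deg_G(s)=3\deg_G(s)\equiv 0\pmod 3\ne 26$, with no case analysis over degrees $9,\dots,13$ needed at all; and if $\deg_G(s)=6$ the absence of a type $+6$ neighbour caps the degree at $24<26$, as you showed. So the gap is concrete: you are missing the exact count of type $+2$ neighbours (or any substitute argument), and without it your proof covers only $\deg_G(v)\le 8$.
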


\begin{proof}
Clearly, $s$ has no neighbours of type $-6,-4$ or $-2$. The $G_n$-neighbours of type $0$ are exactly the vertices that are also adjacent to $s$ in $G$, that is, there are \textit{exactly} $\deg_G(s)$ many.
The potential neighbours of type $+4$ and $+6$ are shown in \cref{fig:m0_cases},\nolinebreak\space which~amount to \textit{at most} eight neighbours of these types. 
Note that these can exist only if $\deg_G(s)=6$.

\begin{figure}[ht!]
\centering
\includegraphics[width=0.65\textwidth]{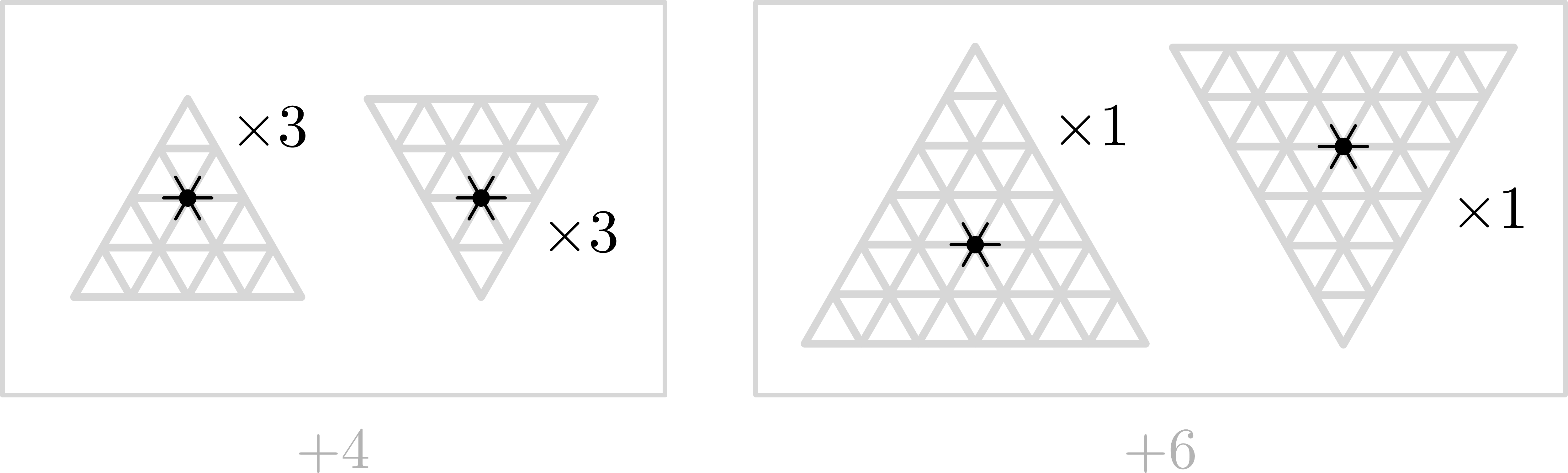}
\caption{The eight possible neighbours of a \pyramid\ of side length $m=0$ of type $+4$ and $+6$. See the caption of \cref{fig:large_enough_cases} for an explanation of the multiplicities.}
\label{fig:m0_cases}
\end{figure}

It remains to count the neighbours of type $+2$, which will turn out at \textit{exactly} $2\deg_G(s)$, independent of the specifics of $G$. Observe first that there can be two types of neighbours $T\in N_{G_n}(s)$ of type $+2$ distinguished by the $T$-degree of $s$, which is either two or four (cf.\ \cref{fig:m0_2_cases}). 
We shall say that these neighbours are of type $+2_2$ and $+2_4$ respectively.


In the following, an \textit{$r$-chain}
is an inclusion chain $s\subset \Delta\subset T$, where $\Delta$ is an $s$-incident triangle in $G$, and $T$ is a neighbour of $s$ of type $+2_r$.
The following information can be read from \cref{fig:m0_2_cases}: a neighbour of $s$ of type $+2_r$ can be extended to an $r$-chain in exactly $n_r$ ways (where $n_2=1$ and $n_4=3$).
Likewise, an $s$-incident triangle can be extended to an $r$-chain in exactly $n_r$ ways as well.
By double counting, we find that $1/n_r$ times the number of $r$-chains equals both the number of $s$-incident triangles (which is exactly $\deg_G(s)$) and the number of neighbours of $s$ of type $+2_r$.
In conclusion, the number of neighbours of $s$ of type $+2$ is \textit{exactly} $2\deg_G(s)$.


\begin{figure}[ht!]
\centering
\includegraphics[width=0.8\textwidth]{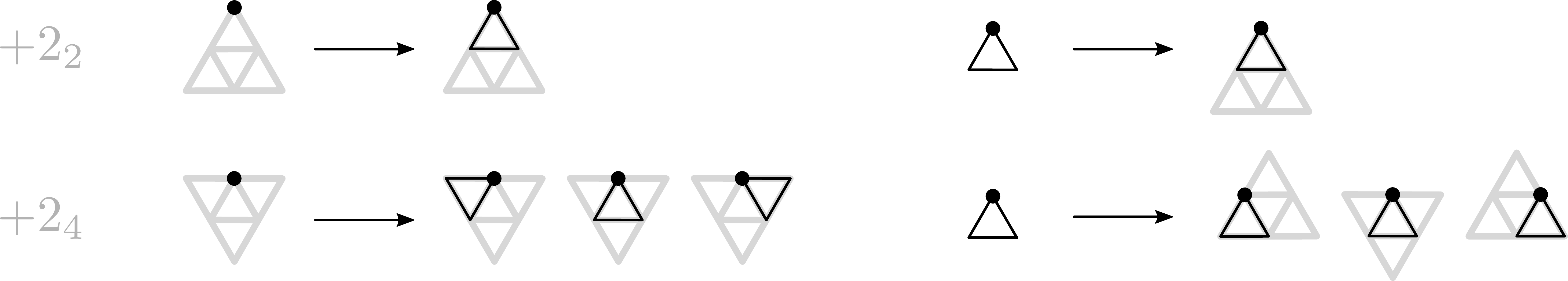}
\caption{Row $+2_r$ shows the ways in which an inclusion $s\subset T$ (left; $T$ being a $G_n$-neighbour of $s$ of type $+2_r$) or an inclusion $s\subset \Delta$ (right; $\Delta$ being an triangle in $G$) extends to an $r$-chain in $n_r=r-1$ ways. 
}
\label{fig:m0_2_cases}
\end{figure}


Taking together all of the above, we count
$$\deg_{G_n}(s) \begin{cases}
    = \deg_G(s) + 2\deg_G(s) = 3\deg_G(s) & \text{if $\deg_G(s)\not=6$} \\
    \le 6+2\cdot 6+8=26 & \text{if $\deg_G(s)=6$}
\end{cases}.$$
Since $26\not\equiv 0\pmod3$, if $\deg_G(s)\not=6$ we obtain $\deg_{G_n}(s)\not=26$ right away. If $\deg_G(s)=6$ and if there is no $G_n$-neighbour of type $+6$, then the maximal amount of 26 neighbours cannot have been attained, and $\deg_{G_n}(s)\not=26$ as well.
\end{proof}

It remains to show that if $G$ contains arbitrarily large \subpyramids, then the graph invariant $D(G_n)$ is both finite and unbounded as $n\to\infty$.\nolinebreak\space
We first prove finitude of $D(G_n)$ if $n\in 2\N$ (in particular, $n\ge 2$, as $D(G_0)=D(G)$ might be infinite).

\begin{lem}\label{lem_distanceupperbound}
	If $n\in 2\N$, then each $S\in V(G_n)$ has a distance to $\irr{G_n}$ of at most $n/6+1$. That is, $D(G_n)\le n/6+1$.
\end{lem}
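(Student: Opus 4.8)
The plan is to show that from every vertex $S\in V(G_n)$ one can walk to a vertex of degree $\neq 26$ in at most $n/6$ steps, by repeatedly climbing to a larger \pyramid\ along type $+6$ adjacencies. The engine of the argument is the one-directional statement extracted from \cref{res:m_ge_6_degrees_26_iff} and \cref{res:m_eq_0_degrees_26_if}: for every $S\in V(G_n)$, if $\deg_{G_n}(S)=26$, then $S$ has a $G_n$-neighbour of type $+6$. For side length $m=0$ this is the contrapositive of \cref{res:m_eq_0_degrees_26_if}; for $m\ge 6$ it is immediate from \cref{res:m_ge_6_degrees_26_iff}; and for the two remaining even side lengths $m\in\{2,4\}$ it follows from the extension of \cref{res:m_ge_6_degrees_26_iff} to all $m\ge 2$ recorded in the remark after that lemma.

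First I would fix $n\in 2\N$ and an arbitrary $S\in V(G_n)$ of side length $m$. If $\deg_{G_n}(S)\neq 26$, then $S\in\irr{G_n}$ and the distance is $0$, so I may assume $\deg_{G_n}(S)=26$. Using the implication above, I would inductively construct a walk $S=S_0,S_1,S_2,\ldots$ in $G_n$ in which $S_{i+1}$ is a type $+6$ neighbour of $S_i$, continuing precisely as long as $\deg_{G_n}(S_i)=26$. By \cref{Def_theCliqueGraph}, each $S_i$ is then a \pyramid\ of side length $m+6i$, so the side lengths strictly increase along the walk, and consecutive vertices are distinct and $G_n$-adjacent.

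The termination step is the heart of the matter. Since every vertex of $G_n$ is a \subpyramid\ of $G$ of side length at most $n$ (\cref{Def_theCliqueGraph}), the strictly increasing side lengths $m+6i$ cannot exceed $n$; hence the construction must halt after finitely many steps at some $S_K$ for which the stopping condition fails, \ie\ $\deg_{G_n}(S_K)\neq 26$ and so $S_K\in\irr{G_n}$. From $m+6K\le n$ and $m\ge 0$ I obtain $K\le n/6$. As $S_0,\ldots,S_K$ is a walk of length $K$ ending in $\irr{G_n}$, this gives $\dist_{G_n}(S,\irr{G_n})\le K\le n/6\le n/6+1$. Taking the maximum over all $S\in V(G_n)$ yields $D(G_n)\le n/6+1$, as claimed (indeed with the slightly sharper bound $n/6$).

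The only place any real care is needed is the implication ``$\deg_{G_n}(S)=26\Rightarrow$ a type $+6$ neighbour exists'' for the small side lengths $m\in\{2,4\}$, where the twisted adjacencies of \cref{fig:small_exceptions} intervene and the clean case analysis behind \cref{res:m_ge_6_degrees_26_iff} does not apply verbatim; this is exactly what the extension to $m\ge 2$ supplies. Importantly, this subtlety can affect at most the \emph{first} step of the walk: every later vertex has side length $m+6\ge 6$ (already for $m=0$), so \cref{res:m_ge_6_degrees_26_iff} governs all subsequent continuations directly. The remaining points — that the side-length cap forces termination and that the constructed sequence is a genuine walk — are routine, and the slack ``$+1$'' in the stated bound comfortably absorbs any loss incurred at these boundary side lengths.
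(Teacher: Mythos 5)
Your overall strategy --- climb along type $+6$ adjacencies until reaching a vertex of degree $\neq 26$, using the side-length cap $n$ to bound the number of steps --- is the same engine that drives the paper's proof. But there is a genuine gap at the side lengths $m\in\{2,4\}$: there you invoke the implication ``$\deg_{G_n}(S)=26$ implies $S$ has a type $+6$ neighbour'' by appealing to the remark following \cref{res:m_ge_6_degrees_26_iff}. That remark is an \emph{unproven} assertion: the paper explicitly declines to verify it, both because the verification requires a distinct case analysis (owing to the twisted adjacencies of \cref{fig:small_exceptions}) and because the paper's own proof of \cref{lem_distanceupperbound} is structured precisely so as never to need it. A proof resting on this assertion is therefore incomplete relative to what the paper establishes. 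Your closing sentence, that the slack ``$+1$'' comfortably absorbs any loss at these side lengths, does not repair this: without the $m\in\{2,4\}$ claim you cannot take even the \emph{first} step of your walk when $S$ has one of these side lengths and degree 26, and additive slack in the final bound is no substitute for that step.

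The gap is local and fixable, and the fix is exactly the maneuver the paper uses. The paper distinguishes two cases: if some pyramid $T$ of side length $\geq 6$ lies within $G_n$-distance $2$ of $S$, it climbs from $T$ rather than from $S$, so only \cref{res:m_ge_6_degrees_26_iff} (\ie\ side lengths $\geq 6$) is ever invoked, and the climb has length at most $n/6-1$, giving $2+(n/6-1)=n/6+1$; otherwise $m<6$, so $S$ has a $G_n$-neighbour $s$ of side length zero (of type $0$, $-2$, or $-4$; note $S\setminus\partial S\neq\emptyset$ for $m=4$), this $s$ has no type $+6$ neighbour by the case assumption, hence $s\in\irr{G_n}$ by \cref{res:m_eq_0_degrees_26_if} and the distance is at most $1$. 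You can graft the same idea into your argument: when $m\in\{2,4\}$, first step down to a side-length-zero neighbour of $S$ and run your climbing argument from there, where only \cref{res:m_ge_6_degrees_26_iff} and \cref{res:m_eq_0_degrees_26_if} are needed. This costs one extra step and yields exactly the stated bound $n/6+1$; your sharper bound $n/6$ is then lost, but it was only available via the unproven remark.
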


\begin{proof}
Suppose $S\cong \Delta_m$ with $m\in 2\N$.
We distinguish two cases.


\ul{Case 1:} there is a $T\in V(G_n)$ of side length $\mu\ge 6$ and $\dist_{G_n}(S,T)\le 2$.
We~then fix a maximally long path $T_0T_1\dots T_\ell$ in $G_n$ with $T_0\coloneqq T$ and $T_i\cong \Delta_{\mu+6i}$ (\ie\ $T_i$~and $T_{i+1}$ are adjacent of type $\pm 6$; see \cref{fig:increasing_pyramids}).
	Since the path is maximal, $T_\ell$ has no~$G_n$-neighbour of type $+6$, and since $T_\ell$ is of side length $\mu+6\ell\ge \mu \ge 6$, we have~$T_\ell\in$ $\irr{G_n}$ by \cref{res:m_ge_6_degrees_26_iff}.
    As a vertex of $G_n$, $T_\ell$ is of side length at most $n$, and hence $\mu+6\ell\leq n\Longrightarrow \ell\leq n/6-\mu/6\le n/6-1$.
    We conclude 
    \begin{align*}
    \dist_{G_n}(S,\irr{G_n}) &\le \dist_{G_n}(S,T)+\dist_{G_n}(T,\irr{G_n}) \\ &\le 2+ (n/6 -1) =n/6+1.
    \end{align*}
   
\begin{figure}[ht!] 
\centering
\includegraphics[width=0.43\textwidth]{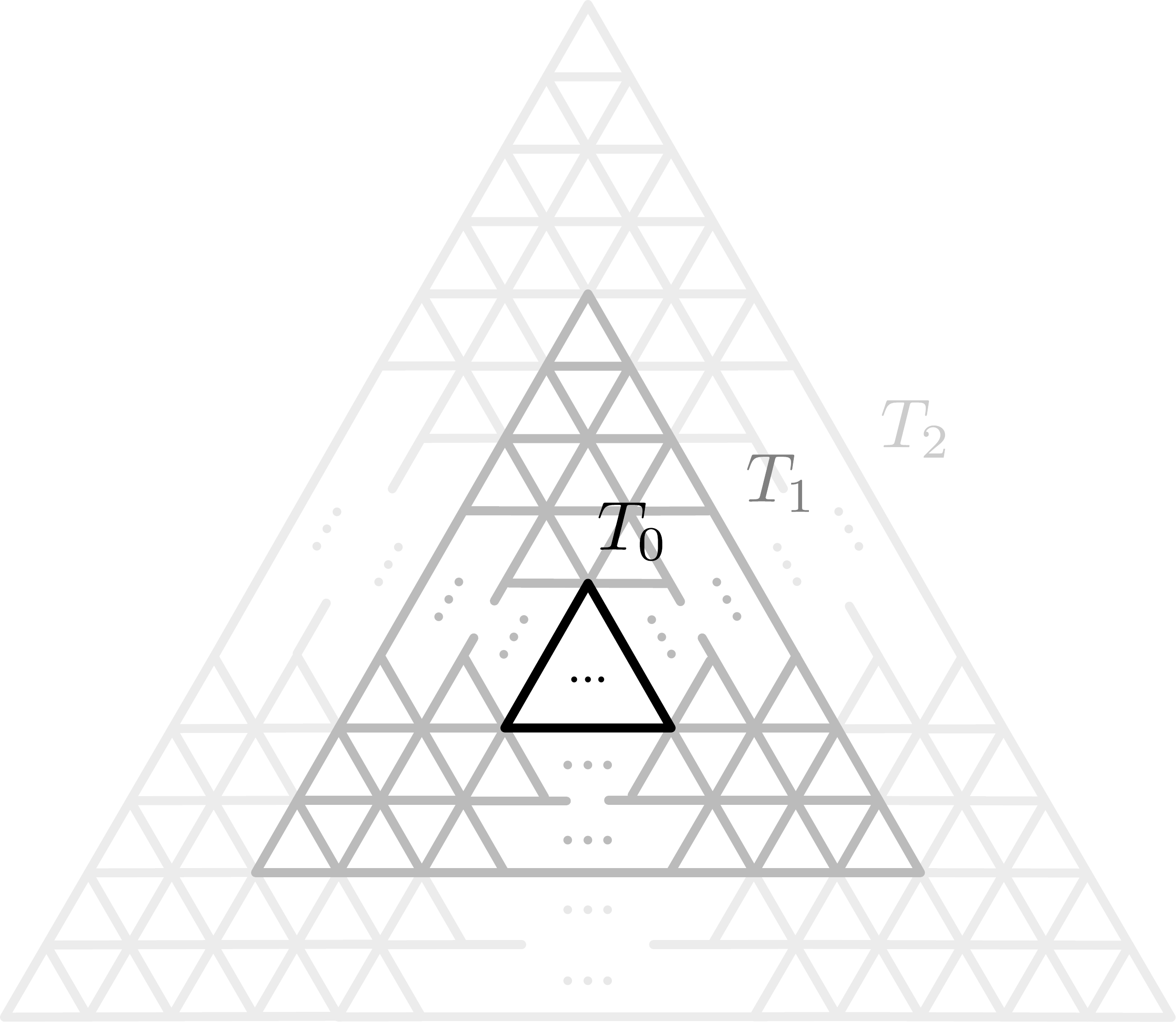}
\caption{Initial segment $T_0T_1T_2\ldots$ of an increasing path of \subpyramids\ of $G$ where $T_i$ and $T_{i+1}$ are adjacent of type $\pm 6$.}
\label{fig:increasing_pyramids}
\end{figure}

\ul{Case 2:} there is \textit{no} $T\in V(G_n)$ of side length $\mu\ge 6$ and $\dist_{G_n}(S,T)\le 2$.
Then we can conclude two things: first, $m< 6$ (otherwise, choose~$T\coloneqq S$) and so there is an $s\in N_{G_n}(S)$ of side length zero. 
Second, $s$ has no neighbour of~type $+6$ (otherwise, set $T$ to be this neighbour). 
But then $s$ cannot have degree~26~by~\cref{res:m_eq_0_degrees_26_if}, and therefore $$\dist_{G_n}(S,\irr{G_n})\le\dist_{G_n}(S,s)= 1\le n/6+1.$$
\end{proof}


Finally, we show that $D(G_n)$ is unbounded as $n\to\infty$, assuming that there are~arbitrarily large \subpyramids\ of $G$.

\begin{lem}\label{lem_distancelowerbound}
	If $G$ contains a \subpyramid\ of side length $n\in 48\N$, then there exists an $S'\in V(G_n)$ with distance to $\irr{G_n}$ of more than $n/48$. 
    That is,\nolinebreak\space $D(G_n)> n/48$.	
\end{lem}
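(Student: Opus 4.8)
The plan is to exhibit a single central \subpyramid\ $S'$ that is so deeply embedded in a large \subpyramid\ $T^*\cong\Delta_n$ of $G$ that every vertex of $G_n$ within $G_n$-distance $r\coloneqq n/48$ of $S'$ still has a neighbour of type $+6$, and hence degree exactly $26$ by \cref{res:m_ge_6_degrees_26_iff}. Since $n\in 48\N$, the number $r$ is an integer, and once this is shown the nearest vertex of $\irr{G_n}$ is at distance at least $r+1=n/48+1>n/48$, which yields $D(G_n)>n/48$.

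First I would fix $T^*\cong\Delta_n$, which exists by hypothesis, and use the flat hexagonal structure of $T^*$ to equip every \subpyramid\ $S\subgreq T^*$ with three \emph{margins} $\alpha(S),\beta(S),\gamma(S)\ge 0$, namely the lattice distances from the three sides of $S$ to the corresponding sides of $T^*$; these satisfy $\alpha+\beta+\gamma+m=n$, where $m$ is the side length of $S$ (the definition is insensitive to the orientation of $S$, so downward-pointing \subpyramids\ are covered as well). I then choose $S'$ to be the centrally placed \subpyramid\ of side length $m'=n/2$ with $\alpha(S')=\beta(S')=\gamma(S')=n/6$. Because $n\in 48\N$ all of these quantities are integers, so $S'$ is a genuine vertex of $G_n$ (its side length is even and at most $n$).

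The technical heart is a Lipschitz-type estimate: if $S\subgreq T^*$ has all margins $\ge 2$, then every $G_n$-neighbour $T$ of $S$ again lies in $T^*$, its side length differs from that of $S$ by at most $6$, and each margin decreases by at most $2$. I would verify this by running through the adjacency types of \cref{Def_theCliqueGraph}. A smaller neighbour ($T\subgr S$) lies inside $S$ and has larger margins. A larger neighbour ($S\subgr T$) extends beyond $S$ by at most $2$ on each side: for type $+2$ the three margins of $S$ inside $T$ sum to $2$, for the strictly interior type $+4$ they sum to $4$ but are each at least $1$ (hence at most $2$), and for the core relationship of type $+6$ each equals exactly $2$; so margins $\ge 2$ force $T\subgreq T^*$. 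A same-level neighbour (type $0$, governed by $S\subgreq N_G[T]$) is a translate of $S$ by at most one lattice step. The main obstacle is to make ``extends beyond $S$'' and ``translate'' precise and, above all, to rule out neighbours that match the flat continuation locally but ``wrap around'' outside $T^*$; here I would invoke that the $\Delta_k$ are flat (all interior vertices have degree $6$) together with the triangular simple connectivity of $G$, which forces the flat extension inside $T^*$ to be the unique \subpyramid\ realising each relation.

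Finally I would walk outward from $S'$. Given any $T$ with $\dist_{G_n}(S',T)\le r$, take a shortest path $S'=S_0,S_1,\dots,S_\ell=T$ with $\ell\le r=n/48$ and apply the estimate inductively: as long as the margins stay $\ge 2$ the path remains inside $T^*$, so after $\ell$ steps every margin is at least $n/6-2r=n/8\ge 2$, and the side length lies in $[\,n/2-6r,\,n/2+6r\,]=[\,3n/8,\,5n/8\,]\subgreq[6,n-6]$ for $n\ge 48$. In particular $T$ has side length $\ge 6$ and all margins $\ge 2$, so its flat extension to a \subpyramid\ of side length $m_T+6$ lies in $T^*\subgreq G$ and is a $G_n$-neighbour of $T$ of type $+6$. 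By \cref{res:m_ge_6_degrees_26_iff} this gives $\deg_{G_n}(T)=26$, so $T\notin\irr{G_n}$. As no vertex within distance $r$ of $S'$ lies in $\irr{G_n}$, we conclude $\dist_{G_n}(S',\irr{G_n})\ge r+1>n/48$, as required.
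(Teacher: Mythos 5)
Your proposal is correct and follows essentially the same route as the paper's own proof: both centre on the \subpyramid\ $S'\cong\Delta_{n/2}$ deep inside the given $\Delta_n$, certify membership in $\degr{G_n}$ throughout a ``safe zone'' via the existence of a type $+6$ neighbour (\cref{res:m_ge_6_degrees_26_iff}), and bound how fast a $G_n$-path can degrade this certificate --- side length by at most $6$ per step, and your per-side margins by at most $2$ per step, where the paper instead tracks $\dist_G(\,\cdot\,,\partial S)$ with Lipschitz constant $4$ via its observation $(*)$. The point you flag as the main obstacle --- ruling out neighbours that ``wrap around'' outside $T^*$ --- is precisely what the paper's containment argument $(**)$ handles with the same ingredients (flatness of interior vertices plus the exhaustive configuration analysis behind \cref{fig:large_enough_cases}), so the two arguments differ only in bookkeeping, not in substance.
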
	

\begin{proof}
	Choose a \pyramid\ $S\in V(G_n)$ of side length $n\in 48\N$.
    Roughly, the idea is to define a set $\mids S\subseteq \degr{G_n}$ that contains ``deep vertices'', \ie\ vertices that have no ``short'' $G_n$-paths that lead out of $\mids S$. 
    We claim that the following set has all the necessary properties:
	%
	$$
	\mids{S} \coloneqq  \Bigg\{
        T\in V(G_n)\;\Bigg\vert
        \begin{array}{l}
            T\subseteq S, \\
            \text{$T$ has side length $m\ge 6$ and} \\
            \dist_G(T,\partial S)\ge 4
        \end{array}        	
	\Bigg\}.
	$$
	%
	%
	
	
	The following observation will be used repeatedly and we shall abbreviate it by $(*)$:\nolinebreak\space if $T\in V(G_n)$ is of side length $m\ge 6$ (\eg\ if $T\in \mids{S}$) and if $T'\in N_{G_n}(T)$ is some $G_n$-neighbour, then $\dist_G(T,v)\le 4$ for all $v\in T'$. 
	This can be verified by considering the configurations shown in \cref{fig:large_enough_cases}. The bound $\le 4$ is best possible as seen in \cref{fig:dist_4}.

\begin{figure}[ht!] 
\centering
\includegraphics[width=0.25\textwidth]{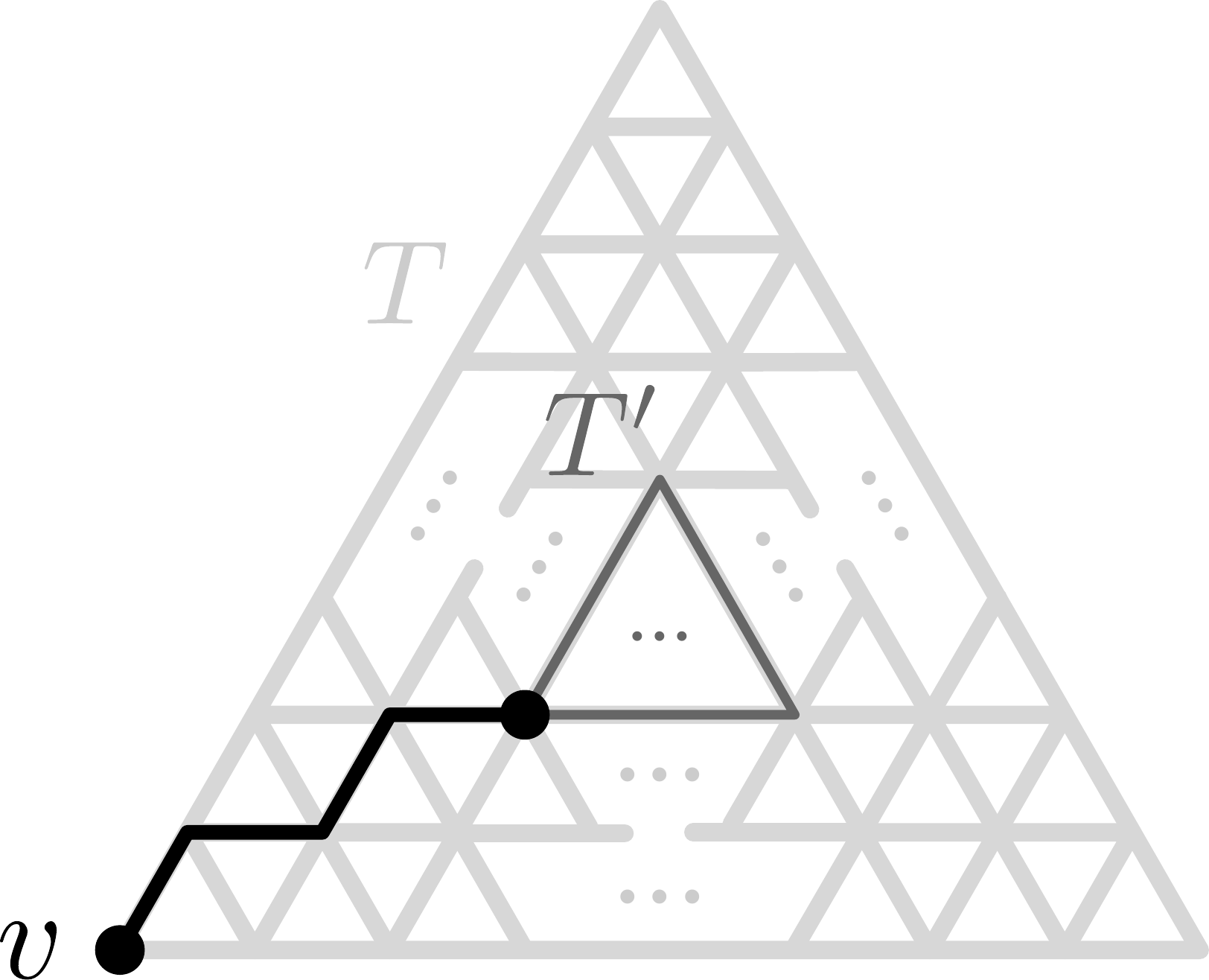}
\caption{The ``corner vertex'' $v$ of $T\in V(G_n)$ (light grey) has $G$-distance four to the neighbour $T'\in N_{G_n}(T)$ of type $-6$ (dark grey).}
\label{fig:dist_4}
\end{figure}
	
    We first verify $\mids{S}\subseteq \degr{G_n}$. 
    Fix $T\in \mids{S}$ and consider an embedding of $S$ into the hexagonal lattice.
    In this embedding, $T\subseteq S$ has a neighbour $T'$ of type $+6$ that, for all we know, might partially lie outside of $S$; though we now show that actually $T'\subseteq S$: 
    in fact, for all $v\in V(T')$ holds
    $$\dist_G(v,\partial S)\ge\dist_G(T,\partial S)-\dist_G(T,v) \ge 4-4 = 0,$$
    where we used both $(*)$ and $T\in\mids{S}$ in the second inequality.
    Thus $T'\subseteq S$ and $T'$ also exists in $G$.
    Note that this argument shows that all $G_n$-neighbours of $T$ are contained in $S$. 
    We denote the latter fact by $(**)$ as we reuse it below.    
    For now we conclude that since $T$ has a $G_n$-neighbour of type $+6$, we have $T\in\degr{G_n}$ by \cref{res:m_ge_6_degrees_26_iff}.
	
    
    Next we identify a ``deep vertex'' in $\mids S$, that is, a vertex with distance to $V(G_n)\setminus \mids S$ of more than $n/48$. 
    We claim that we can choose for this the ``central'' \subpyramid\ $S'\cong \Delta_{n/2}$.
    By that we mean the \pyramid\ obtained~from $S$ by repeatedly deleting the boundary $n/6$ times.    
    The resulting \subpyramid\ has side length $n/2$ and $\dist_G(S',\partial S)=n/6$. 
    Since $n\ge 48$, we have both $m_0\coloneqq n/2\ge 6$ and $\dist_G(S',\partial S)=n/6\ge 4$, and therefore $S'\in\mids{S}$.
	It remains to show that we have $\ell\coloneqq \dist_{G_n}(S',V(G_n)\setminus \mids{S})> n/48$.
	Let $S_0'\ldots S_\ell'$ be a path in $G_n$ from $S_0'\coloneqq S'$ to some $S'_\ell\not\in \mids{S}$.
	Let $m_i\in \N_0$ be the side length of $S'_i$.
	Since $S_{\ell-1}'\in \mids S$, by $(**)$ we have $S_\ell'\subseteq S$.
	Thus, for $S_\ell'$ to be not in $\mids S$, only two reasons are left, and we verify that either implies $\ell> n/48$:
    \begin{itemize}
        \item \ul{Case 1:} $m_\ell<6$.
        Since $S'_{i-1}$ and $S'_i$ are adjacent in $G_n$ they can differ in side~length by at most six (via an adjacency of type $\pm 6$). That is, $m_{i-1}-m_i\le 6$, and thus 
        $$6\ell\ge m_0-m_\ell> n/2-6 \;\implies\; \ell> n/12-1\ge n/48.$$            
        %
        
        \item \ul{Case 2:} $\dist_G(S'_\ell,\partial S) < 4$. Note first that for all $i\in\{1,\ldots,\ell\}$ holds
        %
$$
\dist_G(S'_{i-1},\partial S)- \dist_G(S'_{i},\partial S)
\le \dist_G(S'_{i-1},S'_{i})
\overset{\smash{(*)}}\le 4.
$$
    It then follows
            $$4\ell \ge \dist_G(S'_0,\partial S)-\dist_G(S'_\ell,\partial S)> n/6-4 \;\implies\; \ell> n/24-1\ge n/48.$$
    \end{itemize}
In both cases, the right-most inequality was obtained using $n\ge 48$.
\end{proof}

Since in our setting we have $G_n\cong k^n G$, and since $D(\,\cdot\,)$ is a graph invariant, we have $D(k^n G)=D(G_n)$. We can then conclude

\begin{cor}\label{cor_divergenceofdistance}
	If $G$ contains $\Delta_n$ as a subgraph for $n\in 48\N$, then 
	$$D(k^n G)\in\big( \tfrac n{48}, \tfrac n6+1\big],$$
	 where $D(\,\cdot\,)$ is the graph invariant defined in \eqref{eq:invariant}.
	In particular, if $G$ contains arbitrari\-ly large \subpyramids, then $D(k^n G)$ is unbounded as $n\to\infty$, and $G$~is~therefore \cdiv.
\end{cor}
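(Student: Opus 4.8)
The plan is to assemble the corollary directly from the two preceding lemmas, using only that $D$ is a graph invariant and that $G_n\cong k^nG$. First I would observe that $48\N\subseteq 2\N$, so for $n\in 48\N$ both \cref{lem_distanceupperbound} and \cref{lem_distancelowerbound} apply to $G_n$. The former yields $D(G_n)\le n/6+1$; the latter, using the standing hypothesis that $G$ contains a \subpyramid\ of side length $n$, yields $D(G_n)>n/48$. Since $D$ depends only on the isomorphism type of a graph and since $G_n\cong k^nG$ by \cref{res:structure_theorem}, we have $D(k^nG)=D(G_n)$, which gives the asserted membership $D(k^nG)\in(n/48,\,n/6+1]$.

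For the ``in particular'' clause I would first reduce ``arbitrarily large \subpyramids'' to the specific sizes demanded by \cref{lem_distancelowerbound}. The triangular-shaped graphs are nested in the sense that $\Delta_M$ contains $\Delta_n$ as a subgraph whenever $n\le M$ (a triangular region of side length $M$ in the hexagonal lattice contains one of side length $n$ in a corner). Hence, given any target $n\in 48\N$, the existence of some \subpyramid\ of side length $\ge n$ guarantees a \subpyramid\ of side length exactly $n$, so the first part applies and $D(k^nG)>n/48$. As this holds for arbitrarily large $n\in 48\N$, the quantity $D(k^nG)$ is unbounded as $n\to\infty$ along that subsequence, and therefore unbounded as a sequence.

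Finally I would deduce clique divergence from the unboundedness of $D$ by the standard argument: were $G$ \ccon, the sequence $k^nG$ would be eventually periodic, say $k^nG\cong k^{n+r}G$ for all sufficiently large $n$; then $D(k^nG)$ would take only finitely many values and hence be bounded, contradicting the unboundedness just established. Thus $G$ is \cdiv. The only points requiring care are precisely the two bookkeeping passages in the middle paragraph — extracting a \subpyramid\ of the exact side length $n\in 48\N$ from merely ``arbitrarily large'' ones, and noting that unboundedness along the subsequence $n\in 48\N$ already forces unboundedness of the full sequence — together with the remark that eventual periodicity would bound $D$ only from some index onward, which combined with finitely many earlier terms still yields a global bound and hence the desired contradiction.
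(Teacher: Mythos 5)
Your proposal is correct and matches the paper's (largely implicit) argument: the paper likewise obtains the corollary by combining \cref{lem_distanceupperbound} and \cref{lem_distancelowerbound} with the identity $D(k^nG)=D(G_n)$ coming from \cref{res:structure_theorem} and the invariance of $D$, leaving the bookkeeping you spell out (nestedness $\Delta_n\subseteq\Delta_M$, unboundedness along the subsequence $48\N$, and the eventual-periodicity contradiction) to the reader. The only cosmetic caveat is that early terms such as $D(k^0G)$ may be infinite, so one should phrase the contradiction via the finiteness (from the upper bound) and unboundedness (from the lower bound) of the terms along $n\in 48\N$ rather than via a global bound on the whole sequence, which is exactly what your main argument already does.
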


Together with \cref{rem:one_direction}
we conclude the characterisation of clique convergent trian\-gularly simply connected locally cyclic graphs of minimum degree $\delta\geq 6$.

\begin{theoremX}{A}[Characterisation theorem for triangularly simply connected graphs]\label{cor_classificationpika}
	A triangularly simply connected locally cyclic graph of minimum degree $\delta\geq 6$ is \cdiv\ if and only if it contains arbitrarily large \subpyramids.
\end{theoremX}

\section{Proof of Theorem B}
\label{sec:proof_of_B}

In this section we prove \thmB.
We need to recall basic facts about group actions and graph coverings, which we do in \cref{sec:proof_of_B_actions} and \cref{sec:proof_of_B_covers} below.


\subsection{Group Actions, \texorpdfstring{$\mathemph{\Gamma}$}{Gamma}-Isomorphisms, and Quotient Graphs}
\label{sec:proof_of_B_actions}

We say that a group $\Gamma$ \emph{acts} on a graph $G$ if we have a group homomorphism $\sigma:\Gamma\to\Aut(G)$. For every $\gamma\in \Gamma$ and every $v\in V(G)$, we define  $\gamma v:=\sigma(\gamma)(v)$.  
%
\note{group hom from gamma in aut G}
The graph $G$ together with this action is called a \emph{$\mathemph{\Gamma}$-graph}.
For every subgroup $\Gamma\leq \Aut(G)$, $G$ is a $\Gamma$-graph in a natural way. For two $\Gamma$-graphs $G$ and $H$, we call a graph isomorphism $\phi\colon G\to H$ a \emph{$\mathemph{\Gamma}$-isomorphism}, if $\phi(\gamma v)=\gamma\phi(v)$ for each $v\in V(G)$ and each $\gamma\in \Gamma$. 
\begin{rem}\label{lem_clique_operator_keeps_equivariance}
	 	If $G$ is a $\Gamma$-graph, so is $kG$ with respect to the induced action $\gamma Q=\{\gamma v\mid v\in Q\}$.  Note that in \cite{larrion2000locally} this action is  
	 	denoted as the natural action of the group  $\Gamma_{k}\leq \Aut(kG)$, which is isomorphic to $\Gamma$.
		For a second $\Gamma$-graph $H$ and a $\Gamma$-isomorphism $\phi\colon G\to H$, the map $\mathemph{\phi_k}\colon kG\to kH,Q\mapsto \{\phi(v)\mid v\in Q\}$ is a $\Gamma$-isomorphism. 
%
\end{rem}

\begin{rem}\label{lem_C_is_equivariant}
If a $\Gamma$-graph $G$ is locally cyclic, triangularly simply connected and of minimum degree $\delta\ge 6$, the action of  $\Gamma$ on	
$G$ induces an action on the \subpyramids\ of $G$ which makes the geometric clique graph $G_n$ into a $\Gamma$-graph~as~well.\nolinebreak\space
Moreover, the isomorphism $\mathemph{\psi_n}\colon  G_n\to k^n G$, that exists according to \cref{res:structure_theorem}, is a $\Gamma$-isomorphism. 
This can be seen easily from its explicit construction in \cite[Corollary 6.9]{BAUMEISTER2022112873}, though in order to be self-contained, the argument is summarized in \cref{appendix_c}.
%

\end{rem}

For any vertex $v\in V(G)$ of a $\Gamma$-graph $G$,  we denote the orbit of $v$ under the action of $\Gamma$ by $\mathemph{\Gamma v}$. 
These orbits form the vertex set of the \emph{quotient graph} $\mathemph{G/\Gamma}$, two of which are adjacent if they contain adjacent representatives. Note that if two graphs $G$ and $H$ are $\Gamma$-isomorphic, the quotient graphs $G/\Gamma$ and $H/\Gamma$ are isomorphic.

\subsection{Triangular Covers}\label{sec:universal_covers}
\label{sec:proof_of_B_covers}


In the following, we transfer the convergence criterion of \thmA\ from the triangu\-lar\-ly simply connected case to the general case using the triangular covering maps from \cite{larrion2000locally}. 

We define the topologically inspired term of ``triangular simple connectivity'' 
via the concept of walk homotopy.
%
As usual, a \emph{walk of length $\mathemph{\ell}$} in a graph $G$ is a finite~sequence of vertices $\alpha=v_0\ldots v_\ell$
such that each pair $v_{i-1}v_i$ 
of consecutive vertices is adjacent. The vertex $v_0$ is called the \emph{start vertex}, 
the vertex $v_\ell$ is called the \emph{end vertex}, 
 a walk is called \emph{closed} if start and end vertex coincide, 
 and it is called \emph{trivial} if it has length zero.

In order to define the homotopy relation on walks, we define four types of \emph{elementary moves} (see also \cref{fig_elem_moves}). Given a walk that contains three consecutive vertices that form a triangle in $G$, the \emph{triangle removal} shortens the walk by removing the middle one of them. 
Conversely, if a walk contains two consecutive vertices that lie in a triangle of $G$, the \emph{triangle insertion} lengthens the walk by inserting the third vertex of the triangle between the other two. 
The \emph{dead end removal} shortens a walk that contains a vertex twice with distance two in the walk by removing one of the two occurrences as well as the vertex between them. 
Conversely, the \emph{dead end insertion} lengthens a walk by inserting behind one vertex an adjacent one and then the vertex itself again.  


Note that elementary moves do not change the start and end vertices of walks, not even of closed ones.

\begin{figure}
    \centering
    \includegraphics[width=0.55\textwidth]{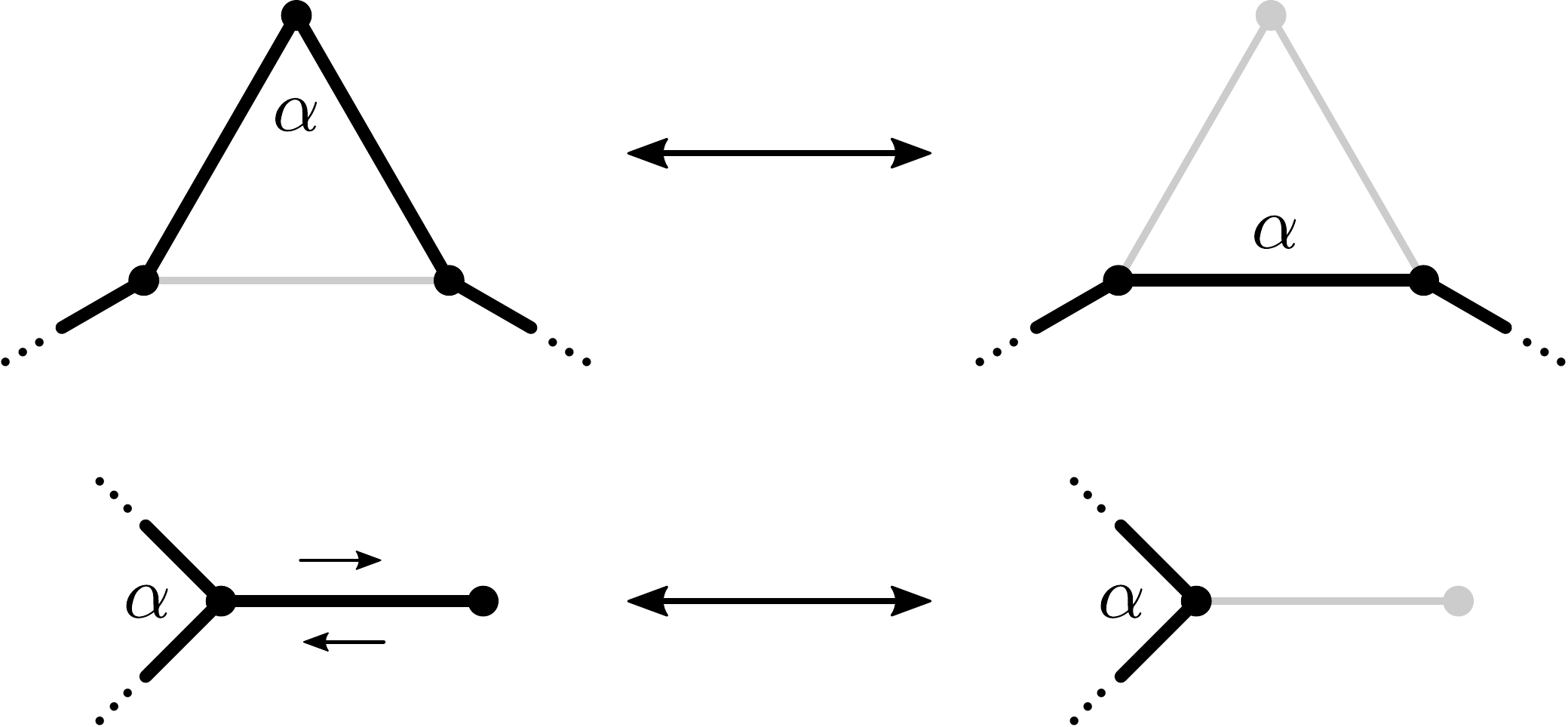}
    \caption{Visualisations of the elementary moves.}
    \label{fig_elem_moves}
\end{figure}

Two walks are called \emph{homotopic} if it is possible to transform one into the other by performing a finite number of elementary moves. The graph $G$ is called \emph{triangularly simply connected} if it is connected and if every closed walk is homotopic to a trivial one.

%

A \emph{triangular covering map} is a homomorphism $p\colon \tilde{G} \to G$ between two connected graphs which is a local isomorphism, \ie\ the restriction $p\vert_{N[\tilde{v}]}\colon N[\tilde{v}]\to N[p(\tilde{v})]$ to the closed neighbourhood of any vertex $\tilde{v}$ of $\tilde{G}$ is an isomorphism and in this case, $\tilde{G}$ is called a \emph{triangular cover} of $G$. The term ``triangular'' refers to the \emph{unique triangle lifting property} which can be used as an alternative definition and is defined in \cref{appendix_b}.
For a triangular covering map $p\colon \tilde{G} \to G$, we define the map  $\mathemph{p_{k^n}}\colon k^n\tilde{G}\to k^n G$ which is constructed from $p$ recursively by $p_{k^0}=p$ and $p_{k^n}(\tilde{Q})=\{p_{k^{n-1}}(\tilde{v})\mid \tilde{v}\in \tilde{Q}\}$ for $n\geq 1$. By \cite[Proposition 2.2]{larrion2000locally}, $p_{k^n}$ is a triangular covering map, as well.

A triangular covering map $p\colon \tilde{G} \to G$ is called \emph{universal} if $\tilde{G}$ is triangularly simply connected, and in this case $\tilde{G}$ is called the \emph{universal (triangular) cover} of $G$. 
Note that every connected graph has a universal cover that is unique up to isomorphism. 
A proof can be found in \cite[Theorem 3.6]{rotman1973covering} or in the appendix in \cref{universal_corver_exandunique}.


For the following lemma, we need to use that triangular simple connectivity is preserved under the clique operator.
This is proven in \cite{larrion2009fundamental},
but we also provide an elemen\-tary proof in the appendix in  \cref{lem_cliqueoperator_preserves_simple_connectivty}.

%
\begin{lem}\label{conv_univ_cover}
 If a connected graph $G$ is \ccon, so is its universal triangular cover
 $\tilde{G}$.
\end{lem}

\begin{proof}
	Let the clique operator be convergent on $G$, \ie\ there are $n,r\in \N$ such that $k^n G\cong k^{n+r}G$, and let $p\colon \tilde{G}\to G$ be a universal triangular covering map.
	As $p_{k^n}$ and $p_{k^{n+r}}$ are triangular covering maps  and $k^n\tilde{G}$ and $k^{n+r}\tilde{G}$ are triangularly simply connected by \cref{lem_cliqueoperator_preserves_simple_connectivty}, they are universal triangular covering maps. As the universal cover is unique up to isomorphism ( \cref{universal_corver_exandunique}), 
	 $k^n\tilde{G}\cong k^{n+r}\tilde{G}$ and $\tilde{G}$ is \ccon.
\end{proof}


In the following, we show that for locally cyclic graphs with minimum degree $\delta\geq 6$ the converse implication is true as well.  This has been stated in \cite{BAUMEISTER2022112873} as Lemma 8.8, but  the proof contains a gap, as it does not show that $k^n\tilde{G}$ and $k^{n+r}\tilde{G}$ are $\Gamma$-isomorphic (in fact, this is still unknown if $\tilde G$ is a cover of a general graph $G$; see also \cref{q:covers}).
We will close this gap in the remainder of this section. 

In order to do this, we need the definition of Galois maps. For a group $\Gamma$, we call a triangular covering map $p\colon \tilde{G}\to G$ \emph{Galois with $\mathemph{\Gamma}$} if $\tilde{G}$ is a $\Gamma$-graph such that the vertex preimages of $p$ are exactly the orbits of the action, which implies $\tilde{G}/\Gamma\cong G$. By \cite[Proposition 3.2]{larrion2000locally}, if $p$ is Galois with $\Gamma$, so is $p_{k^n}$.

The following lemma is proven in \cite[Lemma 8.7]{BAUMEISTER2022112873}, but again, an elementary proof is provided in \cref{deck_trafo_group_galois}. 
 
\begin{lem}[from {\cite[Lemma 8.7]{BAUMEISTER2022112873}}]\label{galois_1}
	A universal triangular covering map $p\colon \tilde{G}\to G$ is Galois with  $\Gamma\coloneqq \{\gamma\in \Aut(\tilde{G})\mid p\circ \gamma=p\}$, which is called the \emph{deck transformation group} of $p$. Consequently, $(k^n\tilde{G})/\Gamma
	\cong k^n G$.
\end{lem}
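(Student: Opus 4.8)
The plan is to reduce everything to the single assertion that $\Gamma$ acts transitively on each fibre $p^{-1}(v)$, $v\in V(G)$; once this is known, the ``Galois with $\Gamma$'' property and the isomorphism $\tilde G/\Gamma\cong G$ follow formally, and the statement about $k^n$ is then handed off to the cited fact that $p_{k^n}$ inherits the Galois property. Indeed, $\tilde G$ is a $\Gamma$-graph by the definition of $\Gamma\le\Aut(\tilde G)$, and from $p\circ\gamma=p$ every orbit $\Gamma\tilde v$ is contained in the fibre $p^{-1}(p(\tilde v))$; transitivity would give the reverse inclusion, so the orbits are exactly the fibres and $p$ is Galois with $\Gamma$. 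The induced bijection $\tilde G/\Gamma\to G$, $\Gamma\tilde v\mapsto p(\tilde v)$, is then a graph isomorphism because, $p$ being a local isomorphism, two fibres contain adjacent representatives if and only if their images under $p$ are adjacent. Finally, since $\Gamma$ acts on $k^n\tilde G$ by \cref{lem_clique_operator_keeps_equivariance} and $p_{k^n}$ is Galois with $\Gamma$ by \cite[Proposition 3.2]{larrion2000locally}, the same reasoning yields $(k^n\tilde G)/\Gamma\cong k^n G$.

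Next I would set up the two standard lifting tools. First, unique walk lifting: because $p$ is a local isomorphism, every walk $\alpha=v_0\ldots v_\ell$ in $G$ and every $\tilde v_0\in p^{-1}(v_0)$ determine a unique walk $\tilde\alpha$ in $\tilde G$ starting at $\tilde v_0$ with $p(\tilde\alpha)=\alpha$, built up edge by edge. Second, homotopy invariance of lifting: I claim each of the four elementary moves lifts. A triangle insertion or removal lifts by the unique triangle lifting property (the local isomorphism $p$ carries triangles of $\tilde G$ to triangles of $G$ and lifts each triangle of $G$ uniquely); a dead-end step $x\,y\,x$ lifts to $\tilde x\,\tilde y\,\tilde x$ since the unique neighbour of $\tilde y$ lying over $x$ is $\tilde x$ itself. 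Consequently homotopic walks in $G$ sharing a lifted start vertex share a lifted end vertex, and a null-homotopic closed walk lifts to a closed walk.

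With these tools I would construct, for any two $\tilde u,\tilde v$ in a common fibre, a deck transformation $\gamma$ with $\gamma(\tilde u)=\tilde v$. Given $\tilde w\in V(\tilde G)$, choose a walk from $\tilde u$ to $\tilde w$ (possible as $\tilde G$ is connected), project it by $p$, lift the projection starting at $\tilde v$, and set $\gamma(\tilde w)$ to be the endpoint of this lift. Well-definedness is the heart of the matter, and the main obstacle: two walks $\tilde\alpha,\tilde\beta$ from $\tilde u$ to $\tilde w$ form a closed walk $\tilde\alpha\cdot\tilde\beta^{-1}$ at $\tilde u$, which is null-homotopic precisely because $\tilde G$ is triangularly simply connected; its projection is then null-homotopic in $G$ (elementary moves project to elementary moves), so by homotopy invariance its lift from $\tilde v$ is closed, which forces the lifts of $p(\tilde\alpha)$ and $p(\tilde\beta)$ from $\tilde v$ to share an endpoint. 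This is exactly the step where triangular simple connectivity of the universal cover is indispensable.

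It then remains to verify that $\gamma$ is a deck transformation: $p\circ\gamma=p$ holds by construction; $\gamma$ preserves adjacency (extend a walk to $\tilde w$ by one further edge to reach a neighbour); $\gamma(\tilde u)=\tilde v$ via the trivial walk; and the symmetric construction of a map $\gamma'$ sending $\tilde v\mapsto\tilde u$ gives $\gamma'\circ\gamma$ a deck transformation fixing $\tilde u$, hence the identity by uniqueness of lifts, so $\gamma\in\Aut(\tilde G)$ and thus $\gamma\in\Gamma$. This establishes transitivity of $\Gamma$ on fibres and completes the argument.
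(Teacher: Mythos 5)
Your proof is correct and is in essence the paper's own argument: the paper likewise reduces the Galois property to transitivity of $\Gamma$ on the fibres of $p$ and likewise delegates the statement about $k^n$ to \cite[Proposition 3.2]{larrion2000locally}. The only difference is organisational rather than mathematical: where you build the deck transformation from first principles (unique walk lifting, homotopy invariance of lifts, and well-definedness via triangular simple connectivity of $\tilde G$), the paper obtains it in one step by applying its universal-property lemma (\cref{lem:simple_connected_is_universal}, with $q=p$) together with uniqueness of the universal cover (\cref{universal_corver_exandunique}) — and the proofs of those lemmas consist of exactly the walk-lifting construction and well-definedness argument that you carry out inline.
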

%

We are now able to deduce the clique convergence of a graph from the clique convergence of its universal cover.
\begin{lem}\label{univ_cover_conv}

Let $G$ be a locally cyclic graph with minimum degree $\delta\geq 6$ and $\tilde{G}$ its~universal triangular cover.
	If $\tilde{G}$ is \ccon, then so is $G$.
\end{lem}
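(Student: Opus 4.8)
The plan is to use \thmA\ to convert the \ccon\ hypothesis on $\tilde G$ into a uniform bound on the side lengths of its \subpyramids, then to upgrade the resulting stabilisation of the geometric clique graphs to a \emph{$\Gamma$-isomorphism}, where $\Gamma$ is the deck transformation group provided by \cref{galois_1}. Passing to the $\Gamma$-quotient will then transport convergence from $\tilde G$ down to $G$.

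First I would check that $\tilde G$ satisfies the hypotheses of \thmA. Since the universal covering map $p\colon\tilde G\to G$ is a local isomorphism, $\tilde G$ is locally cyclic with the same vertex degrees as $G$, hence of minimum degree $\delta\ge 6$; and $\tilde G$ is triangularly simply connected by definition of a universal cover. As $\tilde G$ is assumed \ccon, the contrapositive of \thmA\ shows that $\tilde G$ does \textit{not} contain arbitrarily large \subpyramids. I therefore fix an $M$ such that every \subpyramid\ of $\tilde G$ has side length at most $M$.

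Next I would examine the geometric clique graphs $\tilde G_n$ for even $n\ge M$. By \cref{Def_theCliqueGraph}, the vertices of $\tilde G_n$ are the \subpyramids\ of $\tilde G$ of even side length $m\le n$; since every \subpyramid\ has side length at most $M\le n<n+2$, the vertex sets of $\tilde G_n$ and $\tilde G_{n+2}$ both equal the set of all even-side-length \subpyramids\ of $\tilde G$, and the adjacency rules of type $0,\pm2,\pm4,\pm6$ are independent of $n$. Hence the identity on \subpyramids\ is a graph isomorphism $\tilde G_n\cong\tilde G_{n+2}$, which is the non-equivariant content of \cref{rem:one_direction}. The crucial extra point is that $\Gamma$ acts on each $\tilde G_n$ through its (intrinsic, $n$-independent) action on \subpyramids\ by \cref{lem_C_is_equivariant}; since $\mathrm{id}(\gamma S)=\gamma S=\gamma\,\mathrm{id}(S)$, this identity map is in fact a $\Gamma$-isomorphism $\tilde G_n\cong_\Gamma\tilde G_{n+2}$. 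Composing with the $\Gamma$-isomorphisms $\psi_n\colon\tilde G_n\to k^n\tilde G$ from \cref{lem_C_is_equivariant} yields a $\Gamma$-isomorphism $k^n\tilde G\cong_\Gamma k^{n+2}\tilde G$ for every even $n\ge M$. Because $\Gamma$-isomorphic graphs have isomorphic quotients, and $(k^n\tilde G)/\Gamma\cong k^n G$ by \cref{galois_1}, I conclude $k^n G\cong k^{n+2}G$, so $G$ is \ccon.

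The main obstacle is precisely the $\Gamma$-equivariance in the penultimate step: a bare isomorphism $k^n\tilde G\cong k^{n+2}\tilde G$ need not descend to the quotients, and this is exactly the gap in \cite{BAUMEISTER2022112873}. What rescues the argument here is that the stabilisation is realised by the \textit{identity} on the set of \subpyramids\ rather than by some abstract isomorphism extracted from convergence, and the $\Gamma$-action on \subpyramids\ is manifestly $n$-independent, so equivariance is automatic. This is why routing through \thmA\ (to get a literal vertex-set stabilisation) and \cref{lem_C_is_equivariant} (to get equivariant $\psi_n$) is essential, rather than working directly with the period $r$ coming from the convergence of $\tilde G$.
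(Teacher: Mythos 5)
Your proposal is correct and follows essentially the same route as the paper: apply \thmA\ to $\tilde G$ to bound the side lengths of its \subpyramids, observe that the geometric clique graphs $\tilde G_n$ then literally stabilise so that the stabilising isomorphism is the identity and hence automatically a $\Gamma$-isomorphism, transfer this through the equivariant isomorphisms $\psi_n$ of \cref{lem_C_is_equivariant}, and descend to $G$ via the Galois quotient of \cref{galois_1}. Your closing remark also pinpoints exactly the gap in \cite{BAUMEISTER2022112873} that this identity-map trick is designed to close, which is the same observation the paper makes.
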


\begin{proof}
	We start with the universal triangular cover $\tilde{G}$ being \ccon. 
	By~\thmA, there is an $m\in \N$ such that $G$ does not contain $\Delta_m$ as a subgraph. Consequently, $\tilde{G}_{m-2}$ and $\tilde{G}_m$ are identical and thus $\Gamma$-isomorphic (for every $\Gamma$).
	
	Let $\Gamma$ be the deck transformation group of the universal covering map $p\colon \tilde{G}\to G$. By \cref{galois_1}, this implies $k^n G\cong (k^n\tilde{G})/\Gamma$ for each $n\in \N_0$. Using the $\Gamma$-isomorphism $\psi_n\colon  k^n\tilde{G}\to \tilde{G}_n$ from \cref{lem_C_is_equivariant}, 
we conclude that $G$ is \ccon\ via 
$$k^{m-2}G\cong (k^{m-2}\tilde{G})/\Gamma
\cong \tilde{G}_{m-2}/\Gamma=\tilde{G}_{m}/\Gamma \cong (k^{m}\tilde{G})/\Gamma
\cong k^{m}G.$$
%
\end{proof}

By joining \cref{conv_univ_cover}, \cref{univ_cover_conv}, and \thmA, we conclude the characterisation of clique convergent locally cyclic graphs with minimum degree $\delta\geq 6$.

\begin{theoremX}{B}[General characterisation theorem]
	A (not necessarily finite) connected locally cyclic graph of minimum degree $\delta\geq 6$ is \cdiv\ if and only if its universal triangular cover contains arbitrarily large \subpyramids.
\end{theoremX}

\section{Conclusion and Further Questions}
\label{sec:conclusions}

In this article, we completed 
 the characterisation of locally cyclic graphs of minimum degree $\delta\ge 6$ with a convergent clique dynamics, first in the triangularly simply connected case (\thmA) and then in the general case (\thmB). 

Our findings turned out to be consistent with the geometric intuition from the finite case: the hexagonal lattice is clique divergent, as is any of its quotients.
The finite analogues are the 6-regular triangulations of surfaces with Euler characteristic zero, which were known to be \cdiv\ by  \cite{larrion1999clique,larrion2000locally}.
We are tempted to say that the hexagonal lattice is \cdiv\ because it has a ``flat geometry''.

\thmA\ may allow for a similar interpretation:
if a triangularly simply connected locally cyclic graph $G$ of minimum degree $\delta\ge 6$ is \cdiv, then it contains arbitrarily large \subpyramids.
As a consequence, vertices of degree $\ge 7$ cannot be distributed densely everywhere in $G$.
Since degrees $\ge 7$ can be interpreted as a discrete analogue of negative curvature (we think of the 7-regular triangulation of the hyperbolic plane), a potential geometric interpretation of \thmA\ is that $G$ is \cdiv\ because it is ``close to being flat'' on large parts, which then dominate the clique dynamics.


To consolidate this interpretation, it would be helpful to shed more light on the lower degree analogues: locally cyclic graphs of minimum degree $\delta=5$ or even $\delta=4$.
There however, the clique dynamics might be governed by different effects.
%
%
%
In a sense, it was surprising to find that for minimum degree $\delta\ge 6$, the asymptotic behaviour of the clique dynamics is determined only on the global scale, that is, by the presence or absence of subgraphs in $G$ from a relatively simple infinite family (the \pyramids).
Such a description should not be expected for smaller minimum degree: for $\delta \le 5$ there exist finite graphs that are \cdiv\ -- even simply connected ones -- and such clearly cannot contain ``arbitrarily large'' forbidden structures in any sense.

It might be worthwhile to first study triangulations of the plane of minimum degree $\delta= 5$ or $\delta= 4$, since those are not subject to the same argument of ``finite size''.\nolinebreak\space
Yet,\nolinebreak\space as far as we are aware, it is already unknown which of the following graphs are \cdiv:
consider a triangulated sphere of~minimum degree $\delta \in\{4,5\}$ (e.g.\ the octahedron or ico\-sahedron). 
Remove a vertex or edge together with all incident triangles -- which~leaves us with a triangulated disc -- and extend this to a triangulation of the Euclidean plane that is 7-regular outside the interior of the disc (see \cref{fig:disc_example}).
For all we know, it is at least conceivable that below minimum degree $\delta=6$ divergence can appear as a local phenomenon that does not require arbitrarily large ``bad regions''.

\begin{figure}[ht!]
\centering
\includegraphics[width=0.32\textwidth]{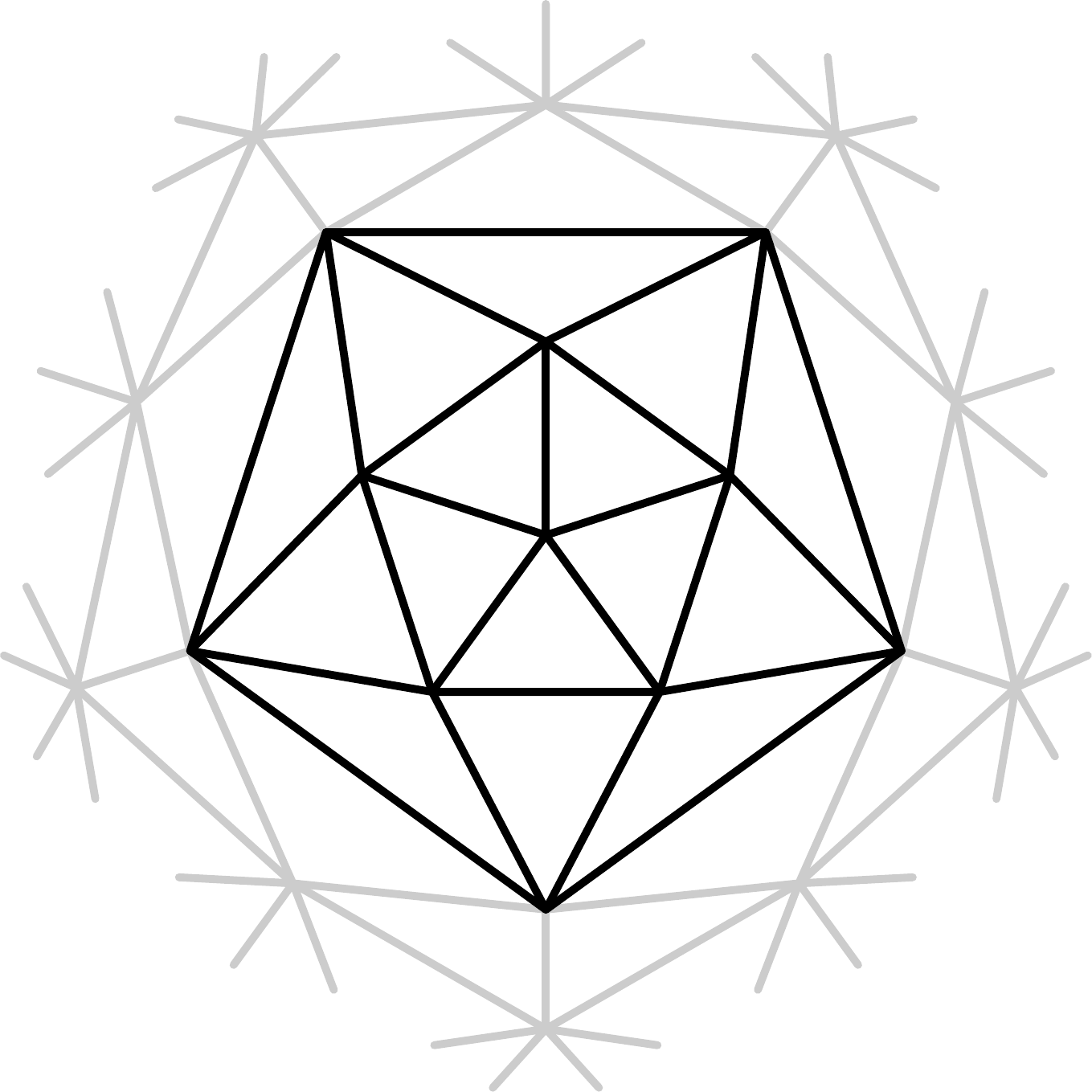}
\caption{An ``almost 7-regular'' triangulation of the Euclidean plane, that is, it is 7-regular outside a small region.}
\label{fig:disc_example}
\end{figure}

For triangulations of closed surfaces (and further mild assumptions, see below), the most elementary open question is whether non-negative Euler characteristic already~implies clique-divergence. This has previously been conjectured by Larrión, Neumann-Lara and Pizaña \cite{larrion2002whitney}, and we shall repeat it here.





\begin{conj}
	\label{conj:non_neg_Euler_diverges}
	If a locally cyclic graph $G$ of minimum degree $\delta\ge 4$ triangulates a closed surface of Euler characteristic $\chi\ge 0$ (\ie\ a sphere, projective plane, torus or Klein bottle), then 
	$G$ is \cdiv.
\end{conj}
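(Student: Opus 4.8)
The plan is to follow the same two-step strategy that settled the $\delta\ge 6$ case: first reduce to a simply connected model via the universal triangular cover, then exhibit a finite but unbounded invariant of $k^nG$. I would begin by clearing away the cases the existing machinery already handles. By Euler's formula one has $\sum_{v}(6-\deg_G(v))=6\chi$, so a triangulation with $\delta\ge 6$ and $\chi\ge 0$ is forced to be $6$-regular and to live on the torus or the Klein bottle (the sphere with $\chi=2$ and the projective plane with $\chi=1$ admit no $\delta\ge 6$ triangulation at all). Such $6$-regular triangulations are \cdiv\ by \thmB, or directly by \cite{larrion1999clique,larrion2000locally}. Hence the entire open content of the conjecture concerns triangulations containing vertices of degree $4$ or $5$: these are unavoidable on the sphere and the projective plane, and may appear on the torus and Klein bottle in a curvature-balanced mixture with vertices of degree $\ge 7$.

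Next I would reduce all four surfaces to just two universal models. The sphere is already triangularly simply connected; the projective plane's universal triangular cover is a triangulation of the sphere; and the torus and Klein bottle lift to doubly periodic triangulations of the Euclidean plane. In every case the covering map is a local isomorphism and therefore preserves all vertex degrees, so the hypothesis $\delta\ge 4$ is inherited by the cover. Because \cref{conv_univ_cover} shows that clique convergence passes from a graph to its universal triangular cover, it suffices contrapositively to prove divergence for the covers. The conjecture thus reduces to two statements: every triangulation of the sphere with $\delta\ge 4$ is \cdiv, and every doubly periodic triangulation of the plane with $\delta\ge 4$ arising as a lift of a torus or Klein bottle triangulation is \cdiv.

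For the core divergence argument I would seek an invariant in the spirit of $D(H)$ from \eqref{eq:invariant}, measuring the size of the largest ``flat'' (degree-$6$) region and tracking its growth under iteration. The guiding heuristic, already implicit in \thmA, is that degree $\ge 7$ behaves like negative curvature and \emph{contracts} \subpyramids\ under $k$, whereas degree $6$ is flat and lets them persist or grow; the constraint $\chi\ge 0$ is exactly what prevents negative curvature from being dense enough to contract everything. Concretely, one would try to show that $\chi\ge 0$ forces arbitrarily large flat or near-flat triangular patches to emerge after sufficiently many iterations, which would then drive an unbounded diameter (or vertex-count) estimate as in \cite{larrion2000locally}.

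The main obstacle -- and the reason this remains a conjecture rather than a corollary of \thmA\ -- is that the explicit model of $k^nG$ by \subpyramids\ (\cref{Def_theCliqueGraph}, \cref{res:structure_theorem}) is available \emph{only} for $\delta\ge 6$. Once a vertex of degree $4$ or $5$ appears, the local clique structure is no longer a subgraph of the hexagonal lattice, the cliques surrounding low-degree vertices interact in ways not captured by the type-$0,\pm2,\pm4,\pm6$ adjacencies, and no structure theorem is known. Developing the analogous local analysis for $\delta\in\{4,5\}$ -- understanding how a degree-$4$ or degree-$5$ ``cone'' evolves under $k$ and whether it is forced to seed growing flat regions -- is precisely the gap that must be filled. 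That this is genuinely hard is underscored by the fact that even the finite spherical instances beyond the octahedron and icosahedron are still open.
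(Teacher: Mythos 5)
The statement you were asked to prove is \cref{conj:non_neg_Euler_diverges}, which the paper records as an \emph{open conjecture} (attributed to Larri\'on, Neumann-Lara, and Piza\~na \cite{larrion2002whitney}); the paper contains no proof of it, so there is nothing to compare your argument against except the surrounding discussion. Your preparatory reductions are correct and consistent with that discussion: Euler's formula $\sum_v(6-\deg_G(v))=6\chi$ shows that the case $\delta\ge 6$, $\chi\ge 0$ collapses to the $6$-regular torus and Klein bottle triangulations, which are \cdiv\ by \cite{larrion1999clique,larrion2000locally} (or via \thmB), and the contrapositive of \cref{conv_univ_cover} --- which holds for all connected graphs, with no degree hypothesis --- validly reduces the remaining cases to proving divergence for sphere triangulations and for doubly periodic plane triangulations of minimum degree $\delta\ge 4$.

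However, the core step is missing, as you yourself concede in your final paragraph: you never construct the invariant meant to replace $D(H)$ from \eqref{eq:invariant}, nor any substitute for \cref{res:structure_theorem}, which is the engine of the entire $\delta\ge 6$ argument and genuinely fails below degree six --- around a degree-$4$ or degree-$5$ vertex the cliques do not organise into \subpyramids, and after one application of $k$ the graph in general leaves the class of locally cyclic surface triangulations, so no iteration-stable local model is available. The heuristic that $\chi\ge 0$ forces growing flat patches is plausible but unproven; indeed the paper stresses that even basic instances, such as the ``almost $7$-regular'' plane triangulations of \cref{fig:disc_example} obtained by excising a low-degree disc, have unknown clique behaviour, and it explicitly entertains the possibility that below $\delta=6$ divergence could be a \emph{local} phenomenon not governed by arbitrarily large substructures at all --- which would undermine your proposed strategy at its root, since your invariant is built to detect exactly such large flat regions. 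So what you have is a sound reduction plus a research programme, not a proof; the conjecture remains open, exactly as the paper states.
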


To shed further light on the perceived connection between topology and clique dynam\-ics, the study of further topologically motivated generalisations appears worthwhile. 
We briefly mention two of them.

First, one could turn to higher-dimensional analogues, that is, triangulations of higher-dimensional manifolds and their 1-skeletons.


\begin{quest}
Can something be said about when the clique dynamics of the triangulation of a manifold converges depending on the topology of the manifold?
\end{quest}

The second generalisation is to allow for triangulations of surfaces \textit{with boundary}.\nolinebreak\space
Such triangulations 
can be formalised as graphs for which each open neighbourhood is either a cycle (of length at least four) or a path graph -- we shall call them \emph{locally cyclic with boundary}.
Triangulations of bordered surfaces~have already received some attention: in  \cite[Theorem 1.4]{larrion2013iterated} the authors show that, except~for~the disc, each compact surface (potentially with boundary) admits a \cdiv\ triangu\-lation.
In contrast, they conjecture that discs do not have divergent triangulations:

\begin{conj}
If a locally cyclic graph $G$ with boundary and of minimum degree $\delta\ge 4$ triangulates a disc, then it is \ccon\ (actually, \emph{clique null}, that is, it converges to the one-vertex graph).
\end{conj}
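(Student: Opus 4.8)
The plan is to show that on a finite triangulated disc the clique operator erodes the graph from its boundary inward, so that a suitable size invariant strictly decreases until the graph collapses to a single triangle and then to the one-vertex graph. This would mirror the role the boundary plays in the present paper: whereas for the boundaryless hexagonal lattice the flat interior lets \subpyramids\ grow without bound and causes divergence, on a disc every \subpyramid\ is capped by the finite radius, and the boundary plays the part of a sink that the operator steadily consumes.

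First I would record a structural obstruction that rules out the naive approach. One might hope that a triangulated disc is \emph{dismantlable} (reducible to a point by iterated deletion of dominated vertices) and then invoke the standard fact that dismantlable graphs are \ccon. However, under the hypothesis $\delta\ge 4$ there are typically \emph{no} dominated vertices at all: an interior vertex has an induced neighbourhood cycle of length $\ge 4$ and is therefore never dominated, while a boundary vertex of degree $\ge 4$ is dominated by no neighbour either. Hence the collapse cannot be read off $G$ directly; it must be produced by the clique operator itself. As a complementary warning, doubling the disc across its boundary yields a triangulated sphere, and such spheres can be \cdiv\ (e.g.\ the octahedron), so convergence of the disc cannot be inherited by closing up the surface -- the boundary is genuinely essential.

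The technical heart would be an analogue of \cref{res:structure_theorem} for discs. Concretely, I would first show that if $G$ is a finite locally cyclic graph with boundary triangulating a disc, then $kG$ is again of this type: this uses that such Whitney triangulations are clique-Helly and that the clique operator preserves homotopy type on clique-Helly graphs, so the clique complex of $kG$ is again a disc, together with a local computation of neighbourhoods showing $kG$ is again locally cyclic with boundary. I would then describe the vertices of $k^nG$ by \subpyramids\ as in \cref{Def_theCliqueGraph}, but now tracking how the boundary forces the admissible \subpyramids\ to recede: a \subpyramid\ surviving into $k^nG$ must sit at $G$-distance growing linearly in $n$ from $\partial G$, which is exactly the peeling estimate underlying \cref{lem_distancelowerbound} and \cref{lem_distanceupperbound}. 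Since $G$ is finite this distance is bounded, so after finitely many steps no \subpyramid\ survives except the single-triangle one, giving $k^nG\cong K_3$ and hence $k^{n+1}G\cong K_1$.

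The main obstacle is precisely this structure theorem with boundary and only $\delta\ge 4$: the machinery of \cite{BAUMEISTER2022112873} is built for boundaryless, triangularly simply connected graphs of minimum degree $\delta\ge 6$, and both relaxations matter. Dropping to $\delta\ge 4$ admits interior vertices of degree $4$ and $5$ (positive curvature), which create clique adjacencies not covered by \cref{fig:large_enough_cases}; and the boundary introduces ``half'' \subpyramids\ whose clique dynamics must be classified separately. Establishing that the net effect of all these contributions is a \emph{strict} decrease of the peeling invariant -- rather than the growth that occurs in the flat, boundaryless case -- is the crux, and is where a careful discrete Gauss--Bonnet bookkeeping (the total curvature of a disc is $+6$, forcing low-degree vertices to accumulate near the boundary) would have to be converted into a quantitative monovariant.
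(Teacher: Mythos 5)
This statement is not a theorem of the paper at all: it appears as an open \emph{conjecture} (attributed in spirit to \cite{larrion2013iterated}), and the paper offers no proof -- it only records that the claim is known when all \emph{interior} vertices have degree $\ge 6$ \cite[Theorem 4.5]{larrion2003clique}. So there is no paper proof to compare against, and your proposal, by your own framing, is a research programme rather than a proof: the decisive ingredients -- a boundary analogue of \cref{res:structure_theorem} at minimum degree $\delta\ge 4$, the claim that $kG$ is again a locally cyclic graph with boundary triangulating a disc, and the strict decrease of your peeling monovariant -- are all deferred, and the last of these is exactly the conjecture restated in quantitative form.

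Beyond incompleteness, one supporting claim is wrong as stated. You assert that such Whitney triangulations with $\delta\ge 4$ ``are clique-Helly'' and then invoke homotopy invariance of $k$ on clique-Helly graphs. But triangulations containing interior vertices of degree $4$ or $5$ need not be clique-Helly; indeed, if they were, Escalante's classical result that $k^2G$ embeds into $G$ for clique-Helly graphs \cite{escalante1973iterierte} would already force convergence, whereas the octahedron ($\delta=4$, closed surface) is \cdiv. The clique-Helly property for Whitney triangulations is tied to local girth $\ge 7$, \ie\ degree $\ge 7$ \cite{larrion2002whitney}, far above your hypothesis. So the step ``$kG$ is again of this type'', on which the entire erosion argument rests, has no support in the regime $\delta\in\{4,5\}$ -- which is precisely why the paper leaves the disc case open and why the known partial result requires interior degrees $\ge 6$. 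Your structural observations (no dominated vertices under $\delta\ge 4$; doubling the disc gives a possibly divergent sphere, so the boundary is essential; the intuition that the boundary should act as a sink countering the flat-interior growth of \subpyramids\ seen in \cref{lem_distancelowerbound}) are sound and worth keeping as motivation, but they do not close the gap.
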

%

This is known to be true if all interior vertices of the triangulation have degree $\ge 6$ \cite[Theorem 4.5]{larrion2003clique}. 


Moving on from the topologically motivated investigations, yet another route is to~generalise from locally cyclic graphs of a particular minimum degree to graphs of a lower-bounded \emph{local girth} (that is, the girth of each open neighbourhood is bounded from below).
In fact, it has already been noted by the authors of \cite{larrion2002whitney} that their results apply not only to locally cyclic graphs of minimum degree $\ge 7$, but equally to general graphs of local girth $\ge 7$.

\begin{quest}
Can the results for locally cyclic graphs of minimum degree $\delta\ge 6$ be generalized to graphs of local girth $\ge 6$? 
\end{quest}

Various other open questions emerge from the context of graph coverings.
As we~have seen in \cref{conv_univ_cover}, if a graph $G$ is clique convergent, so is its universal triangular cover $\tilde G$.
Even stronger: if $k^n G\cong G$, then $k^n\tilde G\cong \tilde G$. If $G$ is locally cyclic of~minimum degree $\delta\ge 6$, 
then conversely, by \cref{univ_cover_conv} convergence of $\tilde G$ implies convergence of $G$.


For general triangular covers $p\colon\tilde G\to G$ (between connected locally finite graphs)~how\-ever, such connections are not known. 
If both $\smash{\tilde G}$ and $G$ are finite, then a straightforward pigeon hole argument shows that clique convergence of $G$ and of $\tilde G$ are equivalent.
Yet, whether finite or infinite, it is generally unknown whether the statements $k^n G\cong G$ and $k^n\tilde G\cong \tilde G$ are always equivalent. We summarize all of this in the following question:


\begin{quest}\label{q:covers}
	Let $p\colon\tilde{G}\to G$ be a triangular covering map between two connected~locally finite graphs. Is $\smash{\tilde G}$ \ccon\ if and only if $G$ is \ccon? 
	To~consider the directions separately, we ask:
\begin{myenumerate}
	\item
	Is there an analogue of \cref{conv_univ_cover} for non-universal covering maps: if $G$ is \ccon\ but $p$ is not universal, is $\tilde G$ \ccon\ as well?
	\item
	If $\tilde G$ is \ccon, is $G$ \ccon\ as well?
\end{myenumerate}
An even stronger version of the question is:
is $k^n\tilde G\cong \tilde G$ equivalent to $k^n G\cong G$ for every $n\in \N$?
Is this at least true for finite graphs?

\end{quest}

{
\par\bigskip
\parindent 0pt
\textbf{Funding.} 
The second author was supported by the British Engineering and Physical Sciences Research Council [EP/V009044/1]

\par\bigskip
\parindent 0pt
\textbf{Acknowledgement.} 
We thank Markus Baumeister and Marvin Krings for their careful reading of the article and their many valuable comments.
}

\bibliography{referenzen}
\bibliographystyle{abbrv}

\newpage

\appendix

\section{The Clique Graph Operator and Simple Connectivity}\label{appendix_a}

In this section, we show that triangular simple connectivity is preserved under the clique graph operator.
A weaker version was obtained by Prisner \cite{PRISNER1992199} in 1992, who proved that the clique graph operator preserves the first $\Z_2$ Betti number.
Larrión and Neumann-Lara \cite{larrion2000locally} then extended this in 2000 to the isomorphism type of the triangular fundamental group.
An extension to more general graph operators (including the clique graph~operator and the line graph operator) was proven by {Larri{\'o}n, Piza{\~n}a, and Villarroel-Flores \cite{larrion2009fundamental} in 2009.
%
%
The proof presented here is completely elementary, as it explicitly constructs a sequence of elementary moves that transforms a given closed walk to the trivial one.


In order to be triangularly simply connected, the clique graph first needs to be connected.

\begin{lem}\label{lem_connectivity_is_preserved}
	For a connected graph $G$, the clique graph $kG$ is also connected.	
\end{lem}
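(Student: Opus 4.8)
The plan is to promote the connectedness of $G$ to that of $kG$ by transporting a vertex-walk of $G$ into a walk between cliques in $kG$. The guiding observation is twofold: every edge of $G$ extends to a clique, and any two cliques that share a vertex are adjacent (or equal) in $kG$. So a walk in $G$ naturally induces a walk in $kG$ by replacing each traversed edge with a clique containing it.

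First I would record that every vertex $v\in V(G)$, and in fact every edge, lies in some clique. Since the graphs considered are locally finite, the complete subgraphs containing a given edge form a finite collection ordered by inclusion, so a maximal one exists; in particular $V(kG)\neq\emptyset$. (In full generality this is a Zorn's lemma argument, but local finiteness makes it elementary.)

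Next, take two arbitrary cliques $Q,Q'\in V(kG)$ and choose vertices $u\in Q$ and $u'\in Q'$. Because $G$ is connected, there is a walk $u=v_0,v_1,\ldots,v_\ell=u'$ in $G$. For each $i\in\{1,\ldots,\ell\}$ pick a clique $R_i$ containing the edge $v_{i-1}v_i$. Then for $1\le i\le \ell-1$ we have $v_i\in R_i\cap R_{i+1}$, so consecutive $R_i$ either coincide or are adjacent in $kG$; moreover $v_0\in Q\cap R_1$ and $v_\ell\in R_\ell\cap Q'$, so $Q$ is adjacent to (or equal to) $R_1$, and $R_\ell$ is adjacent to (or equal to) $Q'$. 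Concatenating yields a walk $Q,R_1,\ldots,R_\ell,Q'$ in $kG$ from $Q$ to $Q'$, after deleting repeated consecutive entries where cliques coincide. In the degenerate case $\ell=0$ we have $u=u'\in Q\cap Q'$, whence $Q$ and $Q'$ are already adjacent or equal.

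The argument is essentially routine; the only points requiring care are the existence of a clique through each edge (guaranteed here by local finiteness, so no genuine appeal to Zorn's lemma is needed) and the bookkeeping of the degenerate cases $\ell=0$ and of coincident consecutive cliques, both of which are harmless and do not obstruct the construction of the walk.
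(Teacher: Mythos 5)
Your proof is correct and follows essentially the same approach as the paper's: transport a walk in $G$ into a walk in $kG$ by choosing, for each traversed edge, a clique containing it, and then patching the ends with $Q$ and $Q'$. The only cosmetic difference is that the paper takes a \emph{shortest} walk, which forces consecutive cliques to be distinct, whereas you allow an arbitrary walk and delete coincident consecutive entries; both resolve the same bookkeeping issue.
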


\begin{proof}
	Let $Q,Q'\in V(kG)$ be two cliques of $G$. We choose two vertices $v\in Q$ and $v'\in Q'$. As $G$ is connected, there is a shortest walk $v_0\ldots v_\ell$ in $G$ connecting $v_0=v$ to $v_\ell=v'$. For each $i\in \{1,\ldots,\ell\}$ we choose a clique $Q_i$ that contains the pair of consecutive vertices $v_{i-1}$ and $v_i$ of this walk. Thus, for each $i\in \{1,\ldots,\ell-1\}$, the cliques $Q_{i}$ and $Q_{i+1}$ intersect in $v_{i}$ and they are distinct, as otherwise the vertices $v_{i-1}$ and $v_{i+1}$ would be adjacent, in contradiction to the minimality of the walk  $v_0\ldots v_\ell$. Thus, $Q_1\ldots Q_\ell$ is a walk in $kG$. If $Q\neq Q_1$ we add $Q$ to the start of the walk and if $Q_{\ell}\neq Q'$ we append $Q'$.  The resulting walk connects $Q$ and $Q'$ in $kG$ and, thus, $kG$ is connected.
\end{proof}

We establish a concept of correspondence between a walk in $G$ and a walk in $kG$ in order to use the elementary moves that morph the former one into a trivial one as a guideline for doing the same with the latter one.

We say that a closed  walk $\alpha$ in $G$ and a closed walk $\alpha'=Q_0\ldots Q_\ell$ in $kG$ with $Q_0=Q_{\ell}$ \emph{correspond}  if for each $i\in \{0,\ldots,\ell-1\} $ 
there is a walk $v_{i,0}\ldots v_{i,t_i}$ of length $t_i\in\N_0$ that lies completely in $Q_i$ and $\alpha$ is the concatenation of those walks,
\ie\ $v_{i,t_i}=v_{i+1,0}$ for each  $i\in \{0,\ldots,\ell-1\} $. 
  As $\alpha$ is closed, we the have  $v_{0,0}=v_{\ell-1,t_{\ell-1}}=:v_{\ell,0}$.
   
   \begin{figure}[ht!]
   	\centering
   	\includegraphics[width=0.55\textwidth]{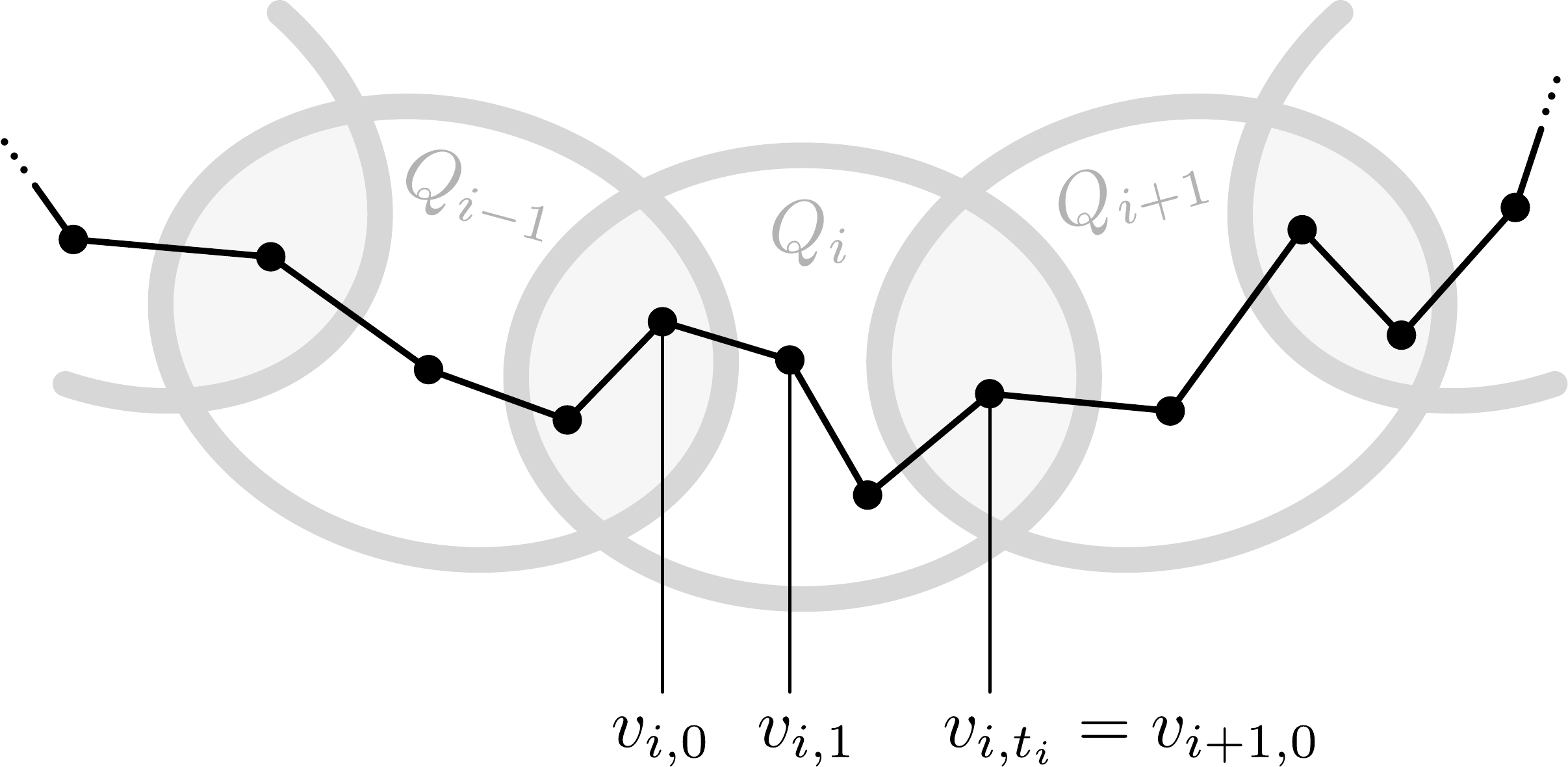}
   	\caption{The correspondence relation between a walk in $G$ and one in $kG$.}
   	\label{fig:correspond}
   \end{figure}

   Note that for every closed walk in $kG$ there is a corresponding one in $G$, 
   	which is obtained as follows.
   	Let $\alpha'=Q_0\ldots Q_{\ell}$ with $Q_0=Q_\ell$ be a closed walk in $kG$.  
   	For every $i\in \{1,\ldots,\ell\}$, we choose $w_i\in Q_{i-1}\cap Q_{i}$, we define $w_0:=w_\ell$, and we drop repeated consecutive vertices. This way, we obtain a walk $\alpha$ which clearly corresponds to $\alpha'$. 

\begin{lem}\label{lem_cliqueoperator_preserves_simple_connectivty}
	If $G$ is a triangularly simply connected graph, so is $kG$. 
\end{lem}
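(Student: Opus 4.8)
The plan is to exploit the correspondence between closed walks in $G$ and closed walks in $kG$ set up above, and to transport a triangular null-homotopy from $G$ across the clique operator. Connectivity of $kG$ is already supplied by \cref{lem_connectivity_is_preserved}, so it remains to show that every closed walk $\alpha'$ in $kG$ is homotopic to a trivial one. First I would pass to a corresponding closed walk $\alpha$ in $G$, which exists by the observation following \cref{lem_connectivity_is_preserved}. Since $G$ is triangularly simply connected, $\alpha$ is null-homotopic: there is a finite sequence of elementary moves $\alpha=\alpha_0\to\alpha_1\to\cdots\to\alpha_N$ with $\alpha_N$ trivial, and my aim is to mirror this sequence in $kG$.

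The heart of the argument is the following \emph{transport claim}, to be proven by a case analysis over the four elementary moves: whenever a closed walk $\beta$ in $G$ corresponds to a closed walk $\beta'$ in $kG$ and $\beta\to\gamma$ is a single elementary move in $G$, then there is a closed walk $\gamma'$ in $kG$ corresponding to $\gamma$ and homotopic to $\beta'$ in $kG$. Granting this, an induction along $\alpha_0\to\cdots\to\alpha_N$ produces a chain $\alpha'=\alpha_0'\simeq\alpha_1'\simeq\cdots\simeq\alpha_N'$ of homotopic walks in $kG$ with each $\alpha_j'$ corresponding to $\alpha_j$; in particular $\alpha_N'$ corresponds to the trivial walk. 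For the base case I note that a walk $\alpha_N'=Q_0\cdots Q_\ell$ corresponding to the trivial walk at a vertex $v$ must satisfy $v\in Q_i$ for all $i$, so all its cliques pairwise intersect and consecutive ones are distinct; then any three consecutive (distinct) cliques form a triangle in $kG$ and admit a triangle removal, while a pattern $Q\,Q'\,Q$ is cleared by a dead-end removal. Iterating reduces $\alpha_N'$ to the trivial walk, and composing all homotopies gives $\alpha'\simeq$ trivial.

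To establish the transport claim I would use the preliminary observation that every edge of a walk lies within a single clique of its partition, so the endpoints of any inserted or removed edge share a clique, whereas the vertex actually inserted or deleted may sit at a \emph{join} vertex between two consecutive cliques. For a \textbf{triangle insertion} $\cdots a\,c\cdots\to\cdots a\,b\,c\cdots$ with $abc$ a triangle, I would choose a clique $R\supseteq\{a,b,c\}$ (it exists since $\{a,b,c\}$ is complete): if $R$ is already the clique carrying the edge $ac$ the walk $\beta'$ is unchanged, and otherwise a single dead-end insertion turning $\cdots Q\cdots$ into $\cdots Q\,R\,Q\cdots$ is legal because $R\cap Q\supseteq\{a,c\}$, and letting $R$ carry the subwalk $a\,b\,c$ gives the required $\gamma'$. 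The \textbf{triangle removal} is handled symmetrically: when the deleted vertex $b$ is a join vertex between $Q_i\ni a,b$ and $Q_{i+1}\ni b,c$, I would insert a clique $R\supseteq\{a,b,c\}$ between them by a triangle insertion in $kG$ (valid since $Q_i,R,Q_{i+1}$ pairwise intersect) and let $R$ carry the subwalk $a\,c$, so that $b$ vanishes from the corresponding walk. The two \textbf{dead-end moves} in $G$ are comparatively routine, as they only shorten or lengthen a subwalk inside a single clique (or across an adjacent pair) and are mirrored either by leaving $\beta'$ unchanged or by one dead-end move in $kG$.

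The step I expect to be the genuine obstacle is the transport claim for the two triangle moves, and specifically the bookkeeping of the degenerate configurations: when the auxiliary clique $R$ coincides with a neighbouring clique, when $Q_i=Q_{i+1}$, or when the affected vertices already happen to be join vertices. In each of these cases one must verify that the cliques invoked really do pairwise intersect, so that the mirrored move in $kG$ is a \emph{legal} elementary move and the proposed repartition genuinely witnesses the correspondence with $\gamma$; organising this case distinction cleanly is the main technical burden of the proof.
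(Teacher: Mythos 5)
Your proposal follows essentially the same route as the paper's proof: pass to a corresponding closed walk in $G$, transport its null-homotopy to $kG$ move by move via a case analysis over the four elementary moves (dead-end insertion of an auxiliary clique for insertions, triangle insertion of an auxiliary clique when a join vertex is removed), and finish by contracting the walk corresponding to the trivial walk, all of whose cliques share a vertex. The degenerate configurations you flag as the main technical burden are precisely what the paper's proof disposes of by first normalising $\alpha'$ (removing cliques whose piece of the partition is trivial) and, when the removed join vertex's neighbour already lies in the adjacent clique, by re-partitioning the walk so that the removal becomes interior to a single piece -- the same repartition device you propose.
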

\begin{proof}
	Let  $G$ be a triangularly simply connected graph. Thus, $G$ is connected and, by  \cref{lem_connectivity_is_preserved}, so is $kG$.
	Next, we show that every closed walk in $kG$ can be morphed to a single vertex by a sequence of elementary moves.
	Let $\alpha'=Q_0\ldots Q_{\ell}$ with $Q_0=Q_\ell$ be a closed walk in $kG$. Let $\alpha$ be any corresponding walk in $G$, thus $\alpha$ consists of subwalks $v_{i,0}\ldots v_{i,t_i}$ as described above.

	Since $G$ is triangularly simply connected,
	there is a sequence of elementary moves from $\alpha$ to a trivial walk. 
	We now describe how we use the first of these moves as a guideline for elementary moves on $\alpha'$, for the other moves in the sequence, it works by induction.
	
	Let $\beta$ be the walk in $G$ that is reached from $\alpha$ by the first move. We now perform two steps in order to construct a walk $\beta'$ in $kG$, which is 
		homotopic to $\alpha'$ and which corresponds to $\beta$.
	
	
	The first step consists of repeated triangle removals and dead end removals on $\alpha'$ that preserve the correspondence to $\alpha$ until $\alpha'$ cannot be shortened any further in that way.
	As no elementary move can change the start and end vertex of a walk, we do not remove $Q_0=Q_\ell$ this way. As for every $i\in \{1,\ldots,\ell-1\}$ with $t_i=0$, the clique $Q_i$ can be removed in a triangle or dead end removal, the only $t_i$ that can be zero is $t_0$. 
	For the second step, we distinguish two cases.
	
	\ul{Case 1:} insertion moves. If the elementary move from $\alpha$ to $\beta$ is a triangle insertion or dead end insertion, let the indices $i\in \{0,\ldots,\ell-1\}$ and $j\in \{0,\ldots,t_i-1\}$ be chosen such that 
the additional one or two vertices are inserted between $v_{i,j}$ and $v_{i,j+1}$. For the triangle insertion, the subwalk $v_{i,0}\ldots v_{i,t_i}$ becomes $v_{i,0}\ldots v_{i,j}v^*v_{i,j+1} \ldots v_{i,t_i}$ and for the dead end insertion, it becomes  $v_{i,0}\ldots v_{i,j}v^*v_{i,j}v_{i,j+1} \ldots v_{i,t_i}$.  
%
	If $v^*\in Q_i$, $\beta'\coloneqq \alpha'$ corresponds to $\beta$ and we are finished.
	If $v^*\notin Q_i$, let $Q^*$ be a clique that contains $v^*,v_{i,j}$ and in the case of a triangle insertion also $v_{i,j+1}$. Then, the dead end inclusion of $Q^*$ and $Q_i$ behind $Q_i$ yields a walk $\beta'$. In the case of a dead end inclusion, it corresponds to $\beta$ because $v_{i,0}\ldots v_{i,j}$ and $v_{i,j}\ldots v_{i,t_i}$ lie in $Q_i$ and $v_{i,j}v^*v_{i,j}$ lies in $Q^*$.
	In the case of a triangle inclusion, it corresponds to $\beta$ because
	$v_{i,0}\ldots v_{i,j}$ and $v_{i,j+1}\ldots v_{i,t_i}$ lie in $Q_i$ and $v_{i,j}v^*v_{i,j+1}$ lies in $Q^*$.
	
   \begin{figure}[ht!]
   	\centering
   	\includegraphics[width=0.75\textwidth]{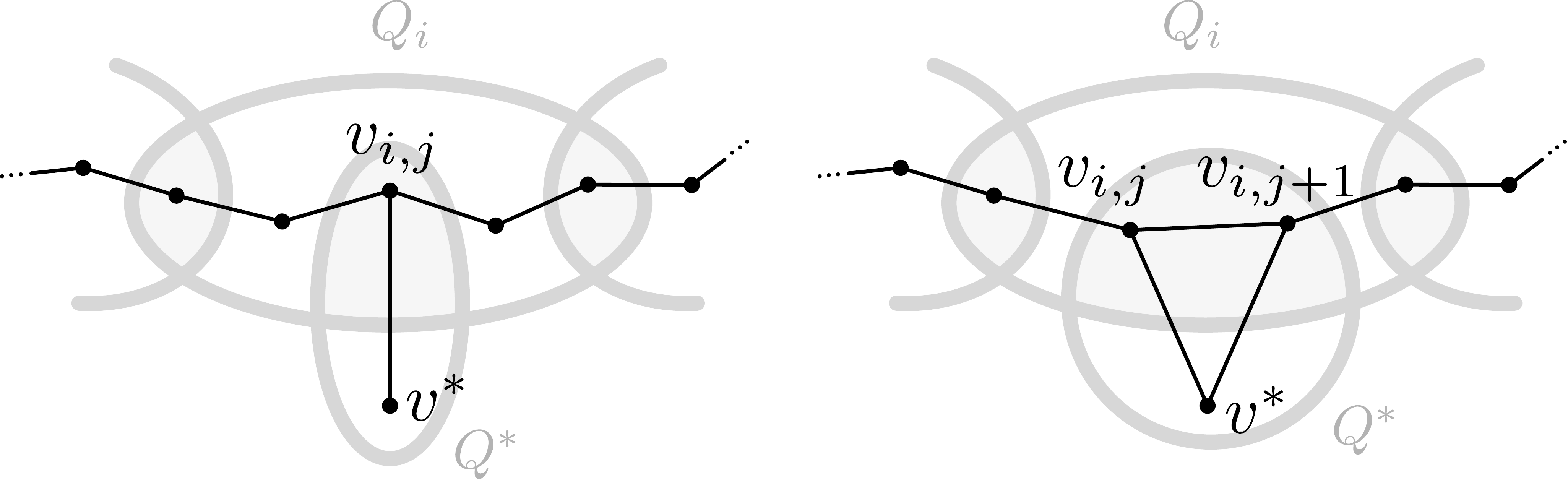}
   	\caption{The elementary move in $kG$ that corresponds to a dead end insertion (left) or triangle insertion (right) of a vertex which is not in $Q_i$.}
   	\label{fig:insertion}
   \end{figure}
	\ul{Case 2:} removal moves. If the elementary move from $\alpha$ to $\beta$ is a triangle removal or dead end removal, let the indices $i\in \{0,\ldots,\ell-1\}$ and $j\in \{0,\ldots,t_i-1\}$ be chosen such that $v_{i,j}$ (triangle removal) or $v_{i,j}$ and $v_{i,j+1}$ (dead end removal) are removed from $Q_i$. 
	 This choice is possible, as the (first) removed vertex and its successor lie in a common $Q_i$. 
	If $j\geq 1$, the walk $\beta'=\alpha'$ corresponds to $\beta$ as $v_{i,0}\ldots v_{i,j-1}v_{i,j+1}\ldots v_{i,t_i}$ or $v_{i,0}\ldots v_{i,j-1}v_{i,j+2}\ldots v_{i,t_i}$ respectively, still lie in $Q_i$. In case of a dead end removal, this works even if $t_i=j+1$, as then $v_{i,j-1}=v_{i,j+1}=v_{i+1,0}$.
	
	If $j=0$, we know that $i\neq 0$, as otherwise $v_{i,j}=v_{0,0}$ would be removed. Furthermore, we know that if $i=1$, $t_{0}\neq 0$ as this also would imply that $v_{0,0}=v_{1,0}$ is removed. In any case, $v_{i,j }$ lies between $v_{i-1,t_{i-1}-1}$ and $v_{i,1}$. 
	We now distinguish  between two cases.
	
	\ul{Case 2.1:} $v_{i-1,t_{i-1}-1}\notin Q_i$ and $v_{i,1}\notin Q_{i-1}$. As $v_{i,1}\in Q_i$, it is immediately clear that $v_{i-1,t_{i-1}-1}\neq v_{i,1}$, thus it is a triangle removal step and $v_{i-1,t_{i-1}-1}v_{i,0} v_{i,1}$ is a triangle.
	Let $Q^*$ be a clique that contains $v_{i-1,t_{i-1}-1}$ and $v_{i,1}$. As $Q^*$ is neither $Q_{i-1}$ nor $Q_i$, the insertion of $Q^*$ between $Q_{i-1}$ and $Q_{i}$ is a triangle insertion and thus the resulting walk $\beta'$ is homotopic to $\alpha'$. Furthermore, $\beta$ and $\beta'$ correspond, because $v_{i-1,0}\ldots v_{i-1,t_{i-1}-1}$ lies in $Q_{i-1}$, $v_{i-1,t_{i-1}-1}v_{i,1}$ lies in $Q^*$ and $v_{i,1}\ldots v_{i,t_i}$ lies in $Q_i$.
	 \begin{figure}[ht!]
		\centering
		\includegraphics[width=0.34\textwidth]{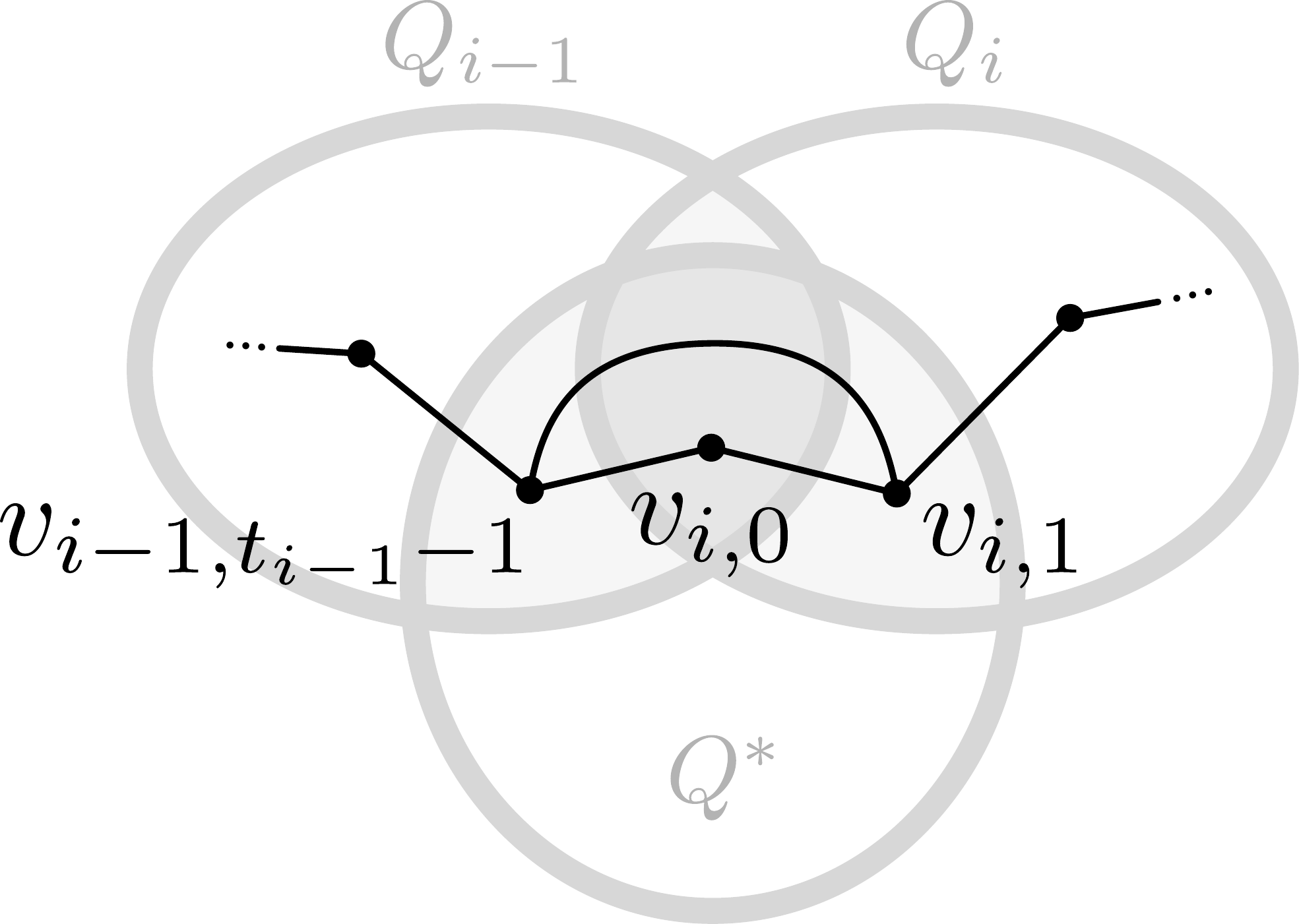}
		\caption{The elementary move in $kG$ that corresponds to triangle removal in $G$.}
		\label{fig:removal}
	\end{figure}
	\note{$v_{i-1,t_{i-1}-1}$ in figure}
	
	\ul{Case 2.2:} $v_{i-1,t_{i-1}-1}\in Q_i$ or $v_{i,1}\in Q_{i-1}$. We start by assuming that $v_{i,1}\in Q_{i-1}$.
	We subdivide $\alpha$ differently in pieces that each lie in one clique $Q_i$. Let $t_{i-1}':=t_{i-1}+1$,  $t_i':=t_i-1$ and $t_{s}':=t_s$ for every $s\in \{0,\ldots,\ell-1\}\setminus \{i-1,i\}$.
	Furthermore, let $v_{i-1,t_{i-1}'}':=v_{i,1}$, let
	$v_{i,u}':=v_{i,u+1}$ for every $u\in\{0,\ldots,t_i'\}$,
	and let $v_{s,u}':=v_{s,u}$
 	for every $s\in \{0,\ldots,\ell-1\}\setminus \{i-1,i\}$ and every $u\in \{0,\ldots,t_s'\}$. Now, the removed vertex is $v_{i-1,t_{i-1}'}'$ and as $t_{i-1}'\geq 1$ we are in a case we have already treated.
	The step for  $v_{i-1,t_{i-1}-1}\in Q_i$ is analogous.
%
%
%
%
%
%
%
%
%
%
%
%
%
%
%
%
%
%

	After proceeding inductively for the other moves of the sequence, we reach a closed walk in $kG$ which corresponds to a trivial walk in $G$. Thus, all vertices of that walk in $kG$ are pairwise connected, as they all contain the single vertex of that trivial walk, and the walk can easily be morphed into a trivial one.
	
\end{proof}

\section{Some Background on (Universal) Triangular Covers}\label{appendix_b}

In this section, we provide some background on triangular covering maps. We start with some preliminaries on walk homotopy in the preimage and image of a triangular covering map. After that, we spend the main part of this section 
showing that the universal cover of a connected graph is unique up to isomorphism and covers every other triangular cover of a connected graph. Afterwards, we show that the universal covering map is Galois, \ie\ that it can be interpreted as factoring out a group of symmetries from a graph.
Most of the proofs are based on ideas from \cite{rotman1973covering}, but they only use basic concepts and they are much more concise as they use stronger prerequisites than the respective theorems in \cite{rotman1973covering} have. 

We remark that every triangular covering map $p\colon \tilde{G}\to G$ fulfils the \emph{unique edge lifting property}, \ie\ for each pair of adjacent vertices $v,w\in V(G)$ and each $\tilde{v}\in V(\tilde{G})$ such that $p(\tilde{v})=v$, there is a unique $\tilde{w}\in V(\tilde{G})$ such that $\tilde{v}$ and $\tilde{w}$ are adjacent and $p(\tilde{w})=w$. This property is equivalent to the \emph{unique walk lifting property}, which says that for each walk $\alpha$ in $G$ and each preimage of its start vertex there is a unique walk $\tilde{\alpha}$ in $\tilde{G}$ which is mapped to $\alpha$. 
Furthermore, triangular covering maps fulfil the \emph{triangle lifting property}, \ie\ for each triangle (i.\,e. $3$-cycle) $\{u,v,w\}$ in $G$ and each preimage $\tilde{u}$ of $u$, there exists a unique triangle $\{\tilde{u}, \tilde{v}, \tilde{w}\}$ in $\tilde{G}$ that is bijectively mapped to $\{u,v,w\}$. Lastly, it follows from the unique walk lifting property that every triangular covering map between two connected graphs is surjective.

Throughout this section, we repeatedly make use of the following lemma connecting triangular covering maps and homotopy of walks.

\begin{lem}[{\cite[Lemma 2.2]{rotman1973covering}}]\label{lem:homotopiclifts}
	Given a triangular covering map $p\colon \tilde{G} \to G$ and two homotopic walks $\alpha=v_0\ldots v_\ell$ and $\beta=v'_0\ldots v_{\ell'}'$ in $G$, for a fixed vertex $\tilde{v}$ from the preimage of their common start vertex $v_0=v_0'$ the unique walks $\tilde{\alpha}=\tilde{v}_0\ldots\tilde{v}_\ell$ with $p(\tilde{v}_i)=v_i$ and $\tilde{\beta}=\tilde{v}_0'\ldots\tilde{v}_{\ell'}'$ with $p(\tilde{v}_i')=v_i'$ are homotopic as well. Especially, they have the same end vertex  $\tilde{v}_\ell=\tilde{v}_{\ell'}'$.
\end{lem}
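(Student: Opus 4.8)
The plan is to reduce to the case of a single elementary move and then lift that move. Since $\alpha$ and $\beta$ are homotopic, there is a finite sequence of walks $\alpha=\gamma_0,\gamma_1,\ldots,\gamma_k=\beta$ in $G$ in which each $\gamma_{j+1}$ arises from $\gamma_j$ by a single elementary move. Because elementary moves fix the start vertex, every $\gamma_j$ begins at $v_0$, so each has a unique lift $\tilde\gamma_j$ starting at the fixed preimage $\tilde v$ by the unique walk lifting property. It therefore suffices to show that whenever a single elementary move takes a walk $\gamma$ to $\gamma'$, the lift $\tilde\gamma'$ is obtained from $\tilde\gamma$ by one elementary move in $\tilde G$; transitivity of homotopy then gives that $\tilde\alpha=\tilde\gamma_0$ and $\tilde\beta=\tilde\gamma_k$ are homotopic.

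For the single-move step I would treat the four move types in turn, in each case exploiting that $p$ restricts to an isomorphism $N[\tilde w]\to N[p(\tilde w)]$. For a triangle removal deleting the middle vertex of a triangle $v_{i-1}v_iv_{i+1}$, the lifted vertices $\tilde v_{i-1}$ and $\tilde v_{i+1}$ both lie in $N[\tilde v_i]$ and project to the adjacent vertices $v_{i-1},v_{i+1}$; since $p|_{N[\tilde v_i]}$ is an isomorphism, they are adjacent, so $\tilde v_{i-1}\tilde v_i\tilde v_{i+1}$ is a triangle in $\tilde G$. Deleting $\tilde v_i$ from $\tilde\gamma$ yields a legitimate walk that projects to $\gamma'$ and starts at $\tilde v$, hence equals $\tilde\gamma'$ by uniqueness of lifts, and it differs from $\tilde\gamma$ by a triangle removal. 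The triangle insertion is symmetric: the inserted third vertex $v^*$ has a unique preimage $\tilde v^*$ in $N[\tilde v_i]$, and $\tilde v^*$ is adjacent to both $\tilde v_i$ and $\tilde v_{i+1}$ because $p|_{N[\tilde v_i]}$ is an isomorphism, so inserting $\tilde v^*$ realises the move upstairs.

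For the dead end moves I would argue analogously. For a dead end removal, $\gamma$ contains $v_{i-1}v_iv_{i+1}$ with $v_{i-1}=v_{i+1}$; then $\tilde v_{i-1}$ and $\tilde v_{i+1}$ are both neighbours of $\tilde v_i$ lying over $v_{i-1}=v_{i+1}$, so by the unique edge lifting property $\tilde v_{i-1}=\tilde v_{i+1}$, and $\tilde\gamma$ has a genuine dead end which can be removed to produce $\tilde\gamma'$. The dead end insertion is the reverse, inserting the unique neighbour $\tilde v^*$ of $\tilde v_i$ lying over $v^*$. In every case $\tilde\gamma'$ differs from $\tilde\gamma$ by exactly one elementary move, completing the induction. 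Finally, since elementary moves never change the end vertex of a walk, in particular for walks in $\tilde G$, the homotopic lifts $\tilde\alpha$ and $\tilde\beta$ share an end vertex, that is, $\tilde v_\ell=\tilde v_{\ell'}'$.

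The main obstacle I expect is not conceptual but bookkeeping: verifying in each of the four cases that the configuration produced upstairs is a bona fide elementary move and that, once projected and anchored at $\tilde v$, it coincides with \emph{the} unique lift $\tilde\gamma'$ rather than merely some lift. The substantive content is concentrated in the triangle and dead end cases, where the local isomorphism property—equivalently the triangle and unique edge lifting properties—is exactly what guarantees that triangles lift to triangles and dead ends lift to dead ends.
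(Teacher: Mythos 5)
Your proposal is correct and follows essentially the same route as the paper's proof: reduce to a single elementary move and lift it using the local isomorphism (unique edge/walk lifting) property, concluding by uniqueness of lifts that the move performed upstairs produces exactly the lift of the modified walk. The only cosmetic difference is that the paper treats the two insertion moves directly and dispatches the removals by exchanging the roles of $\alpha$ and $\beta$, whereas you verify all four move types explicitly.
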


\begin{proof}
	As homotopy is defined by a finite sequence of elementary moves, it suffices to show that an elementary move in the image implies an elementary move in the preimage.
	Thus, let $\alpha=v_0\ldots v_\ell$ be a walk in $G$ and let $\tilde{\alpha}=\tilde{v}_0\ldots\tilde{v}_\ell$ be from its preimage with $p(\tilde{v}_i)=v_i$.
	Let $\beta$ be reached from $\alpha$ by inserting a vertex $v^*$ and possibly $v_i$ again between $v_{i}$ and $v_{i+1}$ for some  $i\in \{0,\ldots,\ell-1\}.$ 
	As lifting a walk is done vertex by vertex from start to end, the lift of $\beta$ begins with the vertices $\tilde{v}_0$ to $\tilde{v}_{i}$.
	As the restriction of $p$ to the neighbourhood of $v_{i-1}$ is an isomorphism, the lift of $\beta$ starting in $\tilde{v}_0$ still has $\tilde{v}_{i+1}$ as the preimage of $v_{i+1}$ and consequently the lift of $\beta$ agrees with that of $\alpha$ in all following vertices.
	Thus, the lift of $\beta$ arises from the lift of $\alpha$ by inserting a vertex $\tilde{v}^*$ and possibly $\tilde{v}_i$ between $\tilde{v}_{i}$ and $\tilde{v}_{i+1}$, which is an elementary move. For the elementary moves that remove vertices, exchange $\alpha$ and $\beta$. 
\end{proof}

Next, we show that every connected graph has a universal triangular cover. The proof of the following lemma is influenced by a combination of \cite[Theorem 2.5,  2.8, and 3.6]{rotman1973covering}.

\begin{lem}\label{lem:existence_simply_connected_cover}
	For every connected graph $G$, there is a universal triangular covering map $p\colon \tilde G \to G$, \ie\ a triangular covering map with a triangularly simply connected graph~$\tilde{G}$.
\end{lem}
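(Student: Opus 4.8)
The plan is to carry out the standard construction of the universal cover from homotopy classes of walks, adapted to the triangular setting. I would fix a basepoint $v_0\in V(G)$ and let the vertices of $\tilde G$ be the homotopy classes $[\alpha]$ of walks $\alpha$ in $G$ that start at $v_0$, and define $p([\alpha])\coloneqq\fin(\alpha)$. Two classes $[\alpha],[\beta]$ are declared adjacent exactly when $\fin(\alpha)\sim\fin(\beta)$ in $G$ and $[\alpha\cdot\fin(\beta)]=[\beta]$, where $\alpha\cdot w$ denotes $\alpha$ extended by the vertex $w$. Since elementary moves never change the start or end vertex of a walk, $\fin$ is constant on homotopy classes, so $p$ is well defined; and since appending a fixed vertex to homotopic walks yields homotopic walks, the class $[\alpha\cdot w]$ depends only on $[\alpha]$, so the adjacency relation is well defined. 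Symmetry of adjacency follows because, taking $\beta=\alpha\cdot\fin(\beta)$ as representative, the walk $\beta\cdot\fin(\alpha)=\alpha\,\fin(\beta)\,\fin(\alpha)$ ends in the pattern $\fin(\alpha)\,\fin(\beta)\,\fin(\alpha)$, and a single dead-end removal returns $\alpha$, so $[\beta\cdot\fin(\alpha)]=[\alpha]$. Connectedness of $\tilde G$ is then immediate: for $\alpha=v_0v_1\ldots v_\ell$ the initial-segment classes $[v_0],[v_0v_1],\ldots,[\alpha]$ form a walk in $\tilde G$ from the basepoint to $[\alpha]$.

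Next I would verify that $p$ is a triangular covering map, i.e.\ a local isomorphism. Fix $\tilde v=[\alpha]$ and put $u\coloneqq\fin(\alpha)$. The proper neighbours of $\tilde v$ are precisely the classes $[\alpha\cdot w]$ with $w\in N_G(u)$, and $p$ sends $[\alpha\cdot w]\mapsto w$; this gives a bijection $N_{\tilde G}[\tilde v]\to N_G[u]$, with surjectivity immediate and injectivity holding because any neighbour of $[\alpha]$ with prescribed end vertex $w$ must equal $[\alpha\cdot w]$. To see that this bijection is a graph isomorphism, suppose $w,w'\in N_G[u]$ with $w\sim w'$; then $u,w,w'$ form a triangle, so a triangle removal turns $\alpha\,w\,w'$ into $\alpha\,w'$, whence $[\alpha w]\sim[\alpha w']$ in $\tilde G$; the converse holds since $p$ is a homomorphism. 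Thus $p|_{N[\tilde v]}$ is an isomorphism onto $N[p(\tilde v)]$, so $p$ is a triangular covering map and, in particular, enjoys the unique walk lifting property.

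The conceptual heart is to show that $\tilde G$ is triangularly simply connected. The key observation I would isolate is that the unique lift of a walk $\gamma=v_0v_1\ldots v_\ell$ in $G$ starting at $[v_0]$ is exactly the walk of initial-segment classes $[\gamma_{\le 0}]\,[\gamma_{\le 1}]\ldots[\gamma_{\le\ell}]$, since this is a walk in $\tilde G$ lying over $\gamma$ with the correct start vertex, and lifts are unique. Hence a closed walk $\tilde\gamma$ based at $[v_0]$, with projection $\gamma$, satisfies $[v_0]=\fin(\tilde\gamma)=[\gamma_{\le\ell}]=[\gamma]$, so $\gamma$ is null-homotopic in $G$. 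Choosing a sequence of elementary moves contracting $\gamma$ to the trivial walk at $v_0$ and applying \cref{lem:homotopiclifts} (legitimate now that $p$ is a triangular covering map), the lift $\tilde\gamma$ is homotopic to the lift of the trivial walk, which is trivial. Thus every closed walk based at $[v_0]$ is null-homotopic, and a change-of-basepoint argument (conjugating by a walk $\delta$ from $[v_0]$ to the given vertex and using that $\delta\bar\delta$ is always contractible by dead-end removals) extends this to closed walks based at any vertex. Therefore $\tilde G$ is connected with every closed walk null-homotopic, i.e.\ triangularly simply connected, and $p$ is the desired universal triangular covering map.

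The main obstacle I anticipate is the careful bookkeeping needed to establish that $p$ is a genuine local isomorphism -- in particular checking that the adjacency relation on homotopy classes is well defined and is reflected in both directions by $p$ (the triangle-removal step for the backward direction, and the dead-end arguments for symmetry and basepoint change). Once this is in place, the simple-connectivity conclusion follows cleanly from the identification of lifts with initial-segment classes together with the already-available homotopy lifting lemma.
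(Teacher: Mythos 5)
Your proposal is correct, and its core is the same as the paper's: you build $\tilde G$ from homotopy classes of walks starting at a fixed basepoint, take edges given by one-vertex continuations, set $p=\fin$, and verify well-definedness, connectivity, and the local-isomorphism property exactly as the paper does (your triangle-removal argument for edges between neighbours is the paper's triangle-insertion argument read backwards). Where you genuinely diverge is the proof that $\tilde G$ is triangularly simply connected. The paper argues by hand: it normalises a closed walk in $\tilde G$ so that consecutive representatives are continuations, reduces -- rather tersely, via a ``we may assume $\alpha_0$ is trivial'' step -- to walks based at the class of the trivial walk, and then explicitly translates each elementary move of a null-homotopy in $G$ into an elementary move between the associated walks of initial-segment classes in $\tilde G$. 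You instead first establish that $p$ is a triangular covering map, identify the unique lift of a walk with its walk of initial-segment classes, conclude from closedness that the projection of a closed walk based at $[v_0]$ is null-homotopic in $G$, and then invoke \cref{lem:homotopiclifts} to lift that null-homotopy; no circularity arises, since \cref{lem:homotopiclifts} is proved for arbitrary triangular covering maps, independently of \cref{lem:existence_simply_connected_cover}. Your route buys a shorter and more conceptual finish, as the move-by-move bookkeeping is absorbed into \cref{lem:homotopiclifts}, and your explicit conjugation argument (contracting $\bar\delta\delta$ by dead-end moves) treats the change of basepoint more carefully than the paper's one-line reduction does. What the paper's version buys in exchange is that the homotopy in $\tilde G$ is exhibited completely explicitly, in keeping with the appendix's stated aim of elementary, self-contained constructions.
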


\begin{proof}
	We give a construction for a graph $\tilde{G}$ and a map $p$ and we show that $p$ is in fact a triangular covering map, that $\tilde{G}$ is connected and that $\tilde{G}$ is triangularly simply connected.
	
	\ul{Construction of $\tilde G$ and $p$:} We fix a vertex $v$ of $G$. For each walk $\alpha$, we denote by $[\alpha]$ its homotopy class, \ie\ the set of walks that can be reached from $\alpha$ by a finite sequence of elementary moves. A walk $\beta$ is called a continuation of a walk $\alpha$ if $\beta$ arises from $\alpha$ by appending exactly one vertex to its end. Now we can define the graph $\tilde{G}$ by 
	\begin{align*}
		V(\tilde{G})&=\{[\alpha]\mid  \alpha \text{ is a walk in $G$ starting at vertex } v\}\\
		E(\tilde{G})&=\{[\alpha][\beta]\mid \beta \text{ is a continuation of }\alpha \}
	\end{align*}
	Note that $[\alpha][\beta]\in E(\tilde{G})$ does not imply that $\beta$ is a continuation of $\alpha$, but 
	there is a $\beta'\in [\beta]$ such that $\beta'$ is a continuation of $\alpha$.
	We define $$p\colon \tilde{G}\to G, [\alpha]\mapsto \fin(\alpha),$$ in which $\fin(\alpha)$ is the end vertex of $\alpha$.  The map $p$ is well defined as homotopic walks have the same start and end vertex.
	
	\ul{Triangular covering map:} For an edge $[\alpha][\beta]\in E(\tilde{G})$, let without loss of generality $\beta$ be a continuation of $\alpha$. Thus, the end vertices of the two walks are adjacent and $p$ is a graph homomorphism. Next we show that the restriction of $p$ to neighbourhoods is bijective. Thus, let $[\alpha_w]$ be a class of walks from $v$ to some vertex $w$. As noted above, the neighbourhood of $[\alpha_w]$ consists of the classes of continuations of $\alpha_w$ to the neighbours of $w$. Especially, the restriction of $p$ to the neighbourhoods of $[\alpha_w]$ and $w$ respectively is bijective. Let now be $\alpha_x$ and $\alpha_y$ be the continuations of $\alpha_w$ by two distinct neighbours $x$ and $y$ of $w$. As we have already shown that the adjacency of $[\alpha_x]$ and $[\alpha_y]$ implies the adjacency of $x$ and $y$, it remains to show the reverse. Thus, let $x$ and $y$ be adjacent. Hence, we can construct the walk $\alpha_y'$ as the continuation of $\alpha_x$ by the vertex $y$, thus, $[\alpha_x]$ and $[\alpha_y']$ are adjacent. Since 
	$\alpha_y'$ is reached from $\alpha_y$ by he elementary move of inserting $x$ between $w$ and $y$, they are homotopic and thus also $[\alpha_y]$ is adjacent to $[\alpha_x]$.
	
	\ul{Connectivity:} We show that every vertex $[\alpha]$ is connected to the trivial walk $\alpha_v$ that consists only of the vertex $v$. Thus, let $\alpha$ be any walk in $G$. The vertices $[\alpha_v]$ and $[\alpha]$ are connected by the walk $[\beta_0]\ldots[\beta_\ell]$ in $\tilde{G}$, where $\ell$ is the length of $\alpha$ 
	and $\beta_i$ is the initial subwalk of length $i$ of $\alpha$. 
	
	\ul{Triangular simple connectivity:} For a closed walk $[\alpha_0]\ldots[\alpha_{\ell}]$ with $[\alpha_0]=[\alpha_\ell]$ in $\tilde{G}$, we can assume without loss of generality that $\alpha_{i}$ is a continuation of $\alpha_{i-1}$ for each  $i\in \{1,\ldots,\ell\}.$ Furthermore, we can assume that $\alpha_0$ is the trivial walk as all the walks $\alpha_0,\ldots,\alpha_{\ell}$ coincide with $\alpha_0$ on their initial subwalks, anyway. We prove that the closed  $[\alpha_0]\ldots[\alpha_{\ell}]$ and the trivial walk $[\alpha_0]$ are homotopic.  
	
	As $\alpha_0$ and $\alpha_{\ell}$ are homotopic, there is a finite sequence of elementary moves that morphs $\alpha_{\ell}$ into $\alpha_0$. 
	To each walk $\alpha'$ in $G$ that occurs in that homotopy between $\alpha_0$ and $\alpha_\ell$, we associate the walk $[\alpha'_0]\ldots[\alpha'_{\ell'}]$ where $\ell'$ is the length of $\alpha'$ and $\alpha'_i$ is the initial subwalk of length $i$ of $\alpha'$. This is a walk by construction and it fulfils $\alpha'_0=\alpha_0$ and $\alpha'_{\ell'}=\alpha'$. This way, we associate the final (trivial) walk $\alpha_0$ to the trivial walk $[\alpha_0]$. 
	If the walks $\alpha'$ and $\alpha''$ are connected by an elementary move in $G$, their associated walks in $\tilde{G}$ are connected by the corresponding elementary move in the following way. A triangle insertion move that inserts $v^*$ after $v_i$ corresponds to the insertion of the class of the continuation of $\alpha_i$ by $v^*$ and changing the representative of the following classes to the one, in which $v^*$ is inserted after $v_i$. The other elementary moves work analogously. 
%
%
%
%
%
%
\end{proof}

In the next lemma, we show that universal triangular covers are in fact universal objects. 
The proof is a combination of special cases from the proofs of \cite[Theorem 3.2 and Theorem 3.3]{rotman1973covering}.


\begin{lem}\label{lem:simple_connected_is_universal}
	The universal triangular covering map $p\colon \tilde{G}\to G$ fulfils the following universal property: 
    for each triangular covering map $q\colon \bar{G}\to G$ there exists a triangular covering map $\tilde{q}\colon \tilde{G}\to \bar{G}$ such that $p=q\circ \tilde{q}$ (see the commuting diagram in \cref{fig:commdig}). 
	Furthermore, 
	for any pair of fixed vertices $\tilde{v}\in\tilde{G}$ and $\bar{v}\in \bar{G}$ such that $p(\tilde{v})=q(\bar{v})$ we get a unique triangular covering map $\tilde{q}_{\tilde{v},\bar{v}}\colon \tilde{G}\to \bar{G}$ with $p=q\circ \tilde{q}_{\tilde{v},\bar{v}}$ and $\tilde{q}_{\tilde{v},\bar{v}}(\tilde{v})=\bar{v}$.  
\end{lem}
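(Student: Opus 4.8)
The plan is to construct $\tilde q$ by the standard ``lift along projected walks'' recipe and to verify the covering and uniqueness properties using the unique walk lifting property together with \cref{lem:homotopiclifts}. Fix vertices $\tilde v\in \tilde G$ and $\bar v\in\bar G$ with $p(\tilde v)=q(\bar v)$. For an arbitrary vertex $\tilde u\in \tilde G$, I would pick a walk $\gamma$ from $\tilde v$ to $\tilde u$ (possible since $\tilde G$ is connected), project it to the walk $p\gamma$ in $G$ starting at $p(\tilde v)=q(\bar v)$, lift $p\gamma$ uniquely to a walk in $\bar G$ starting at $\bar v$ via the unique walk lifting property of $q$, and set $\tilde q_{\tilde v,\bar v}(\tilde u)$ to be the end vertex of this lift. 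Taking $\gamma$ trivial shows $\tilde q_{\tilde v,\bar v}(\tilde v)=\bar v$.

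The first and main step is well-definedness, i.e.\ independence of the chosen walk $\gamma$. Here I would use that $\tilde G$ is triangularly simply connected: any two walks from $\tilde v$ to $\tilde u$ are homotopic (concatenate one with the reverse of the other to obtain a closed, hence null-homotopic, walk). Since $p$ is a graph homomorphism that is a local isomorphism, it carries elementary moves to elementary moves and hence homotopic walks to homotopic walks; thus the two projections are homotopic in $G$. By \cref{lem:homotopiclifts} applied to the covering map $q$, the two $q$-lifts starting at $\bar v$ share the same end vertex, so $\tilde q_{\tilde v,\bar v}(\tilde u)$ does not depend on $\gamma$. This is the crux of the argument.

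Next I would check the remaining structural properties. That $p=q\circ\tilde q_{\tilde v,\bar v}$ is immediate: applying $q$ to the lift returns $p\gamma$, whose end vertex is $p(\tilde u)=\fin(p\gamma)$. To see $\tilde q_{\tilde v,\bar v}$ is a homomorphism, extend $\gamma$ by an edge $\tilde u\tilde u'$; the projected walk gains the edge $p(\tilde u)p(\tilde u')$, whose $q$-lift appends one edge at $\tilde q_{\tilde v,\bar v}(\tilde u)$, so $\tilde q_{\tilde v,\bar v}(\tilde u)\sim \tilde q_{\tilde v,\bar v}(\tilde u')$. Finally, for the covering property, fix $\tilde u$ and set $\bar w\coloneqq \tilde q_{\tilde v,\bar v}(\tilde u)$; since both $p|_{N[\tilde u]}\colon N[\tilde u]\to N[p(\tilde u)]$ and $q|_{N[\bar w]}\colon N[\bar w]\to N[q(\bar w)]=N[p(\tilde u)]$ are isomorphisms and $q\circ \tilde q_{\tilde v,\bar v}=p$ holds on $N[\tilde u]$ (the homomorphism property ensures the image lands in $N[\bar w]$), the restriction $\tilde q_{\tilde v,\bar v}|_{N[\tilde u]}$ equals $(q|_{N[\bar w]})^{-1}\circ p|_{N[\tilde u]}$, a composition of isomorphisms; hence $\tilde q_{\tilde v,\bar v}$ is a triangular covering map. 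The unindexed statement then follows by choosing any $\bar v$ in the fibre over $p(\tilde v)$.

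For uniqueness, suppose $\tilde q'$ is another triangular covering map with $p=q\circ \tilde q'$ and $\tilde q'(\tilde v)=\bar v$. Given $\tilde u$ and a walk $\gamma$ from $\tilde v$ to $\tilde u$, the image $\tilde q'\gamma$ is a walk in $\bar G$ from $\bar v$ with $q\circ(\tilde q'\gamma)=p\gamma$, so $\tilde q'\gamma$ is a $q$-lift of $p\gamma$ starting at $\bar v$; by the uniqueness in the unique walk lifting property it coincides with the lift used above, forcing $\tilde q'(\tilde u)=\tilde q_{\tilde v,\bar v}(\tilde u)$. I expect the only genuinely delicate point to be the well-definedness step, which is exactly where the hypothesis that $\tilde G$ is triangularly simply connected enters and where \cref{lem:homotopiclifts} is needed.
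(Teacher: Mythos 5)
Your proposal is correct and follows essentially the same route as the paper's own proof: the same construction of $\tilde q_{\tilde v,\bar v}$ by projecting a walk from $\tilde v$ to $\tilde u$ via $p$ and lifting it through $q$ from $\bar v$, the same use of triangular simple connectivity together with \cref{lem:homotopiclifts} to get well-definedness, the same factorisation $\tilde q_{\tilde v,\bar v}\vert_{N[\tilde u]} = \bigl(q\vert_{N[\bar u]}\bigr)^{-1}\circ p\vert_{N[\tilde u]}$ for the covering property, and the same unique-walk-lifting argument for uniqueness. The only cosmetic difference is that you justify the homomorphism property by appending an edge to the walk, whereas the paper phrases it via the penultimate vertex of a suitably chosen walk; these are the same argument.
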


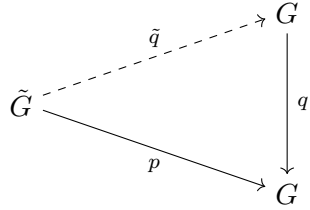
\begin{figure}[htbp]
	\centering
	\begin{tikzpicture} [scale=0.7]
	\HexagonalCoordinates{5}{5}
	\node (K') at (A01) {$\tilde{G}$};
	\node (J') at (A22) {$\bar{G}$};
	\node (K) at (A30)  {$G$};
	\draw[dashed,->] (K')--(J')  node[draw=none,fill=none,font=\scriptsize,midway,above] {$\tilde{q}$}; 
	\draw[->] (K')--(K)  node[draw=none,fill=none,font=\scriptsize,midway,below] {$p$};
	\draw[->] (J')--(K)  node[draw=none,fill=none,font=\scriptsize,midway,right] {$q$};
	\end{tikzpicture}
	\caption{The commuting diagram depicting the 
		 property from \cref{lem:simple_connected_is_universal}.}
	\label{fig:commdig}
\end{figure}

\begin{proof}
	Let $p\colon \tilde{G}\to G$ be a triangular covering map such that $\tilde{G}$ is triangularly simply connected and let $q\colon \bar{G}\to G$ be any triangular covering map.
	We fix a vertex $v\in V(G)$ as well as vertices $\tilde{v}\in V(\tilde{G})$ and $\bar{v}\in V(\bar{G})$ that are in the preimage of $v$ under $p$ and $q$, respectively.
	We  construct $\tilde{q}_{\tilde{v},\bar{v}}$ from $p$ and $q$ and show that it is in fact a well-defined triangular covering map.
	
	\ul{Construction of $\tilde{q}_{\tilde{v},\bar{v}}$:}
	For each $\tilde{u}\in V(\tilde{G})$, we choose a walk $\alpha_{\tilde{v},\tilde{u}}$ from $\tilde{v}$ to $\tilde{u}$. The image of $\alpha_{\tilde v,\tilde{u}}$ under $p$ is a walk, which we  call $\beta_{\tilde{u}}$, from $p(\tilde{v})$ to $p(\tilde{u})$. As $p(\tilde{v})=v=q(\bar{v})$, by the unique walk lifting property, there is exactly one walk $\alpha_{\bar{v},\bar{u}}$ starting at $\bar{v}$ that is mapped to $\beta_{\tilde{u}}$ by $q$. We define $\tilde{q}_{\tilde{v},\bar{v}}(\tilde{u})$ to be the end vertex $\bar{u}$ of $\alpha_{\bar{v},\bar{u}}$.
	
	\ul{Well-Definedness:} We need to show that $\tilde{q}_{\tilde{v},\bar{v}}(\tilde{u})$ is independent of the choice of the walk $\alpha_{\tilde{v},\tilde{u}}$. Thus, let $\alpha'_{\tilde{v},\tilde{u}}$ be a different walk from $\tilde{v}$ to $\tilde{u}$. Its image under $p$ is called $\beta'_{\tilde{u}}$ which has the same start and end vertices as $\beta_{\tilde{u}}$. As $\tilde{G}$ is triangularly simply connected, the walks $\alpha_{\tilde{v},\tilde{u}}$ and $\alpha'_{\tilde{v},\tilde{u}}$ are homotopic and, consequently, so are $\beta_{\tilde{u}}$ and $\beta'_{\tilde{u}}$. By 
	\cref{lem:homotopiclifts} also the preimages under $q$, which are called  $\alpha_{\bar{v},\bar{u}}$ and $\alpha'_{\bar{v},\bar{u}}$, are homotopic and, thus, have the same end vertex, implying $\tilde{q}_{\tilde{v},\bar{v}}$ being well defined. Additionally, $p=q\circ\tilde{q}_{\tilde{v},\bar{v}}$ holds by construction.
	
	\ul{Homomorphy:} Let $\tilde{x},\tilde{y}$ be adjacent vertices in $\tilde{G}$.
	Let $\alpha_{\tilde{v},\tilde{y}}$ be a walk from $\tilde{v}$ to $\tilde{y}$  such that $\tilde{x}$ is its penultimate vertex. 
	Via the same construction as above, we obtain a walk $\alpha_{\bar{v},\bar{y}}$ such that its penultimate vertex $\bar{x}$ fulfils $p(\bar{x})=q(\tilde{x})$.
	Consequently, $\tilde{q}_{\tilde{v},\bar{v}}(\tilde{x})=\bar{x}$ and $\tilde{q}_{\tilde{v},\bar{v}}(\tilde{y})=\bar{y}$ are adjacent and thus $\tilde{q}_{\tilde{v},\bar{v}}$ is a graph homomorphism.
	
	\ul{Triangular covering map:} Let $\tilde{u}$ be a vertex of $\tilde{G}$ and let $u=p(\tilde{u})$ and $\bar{u}=\tilde{q}_{\tilde{v},\bar{v}}(\tilde{u})$ be its images. As $p\vert_{N[\tilde{u}]}\colon N[\tilde{u}]\to N[u]$ and $q\vert_{N[\bar{u}]}\colon N[\bar{u}]\to N[u]$ are isomorphism, so is $\tilde{q}_{\tilde{v},\bar{v}}\vert_{N[\tilde{u}]}=q\vert_{N[\bar{u}]}^{-1}\circ p\vert_{N[\tilde{u}]}$.	
	
	\ul{Uniqueness of $\tilde{q}_{\tilde{v},\bar{v}}$:} Let $\tilde{q}\colon \tilde{G}\to \bar{G}$ be any triangular covering map such that $p=q\circ\tilde{q}$ and $\tilde{q}(\tilde{v})=\bar{v}$. With the definitions from above, both the image of $\alpha_{\tilde{v},\tilde{u}}$ under $\tilde{q}$ and $\alpha_{\bar{v},\bar{u}}$ are lifts of the walk $\beta_{\tilde{u}}$ and they share the start vertex $\bar{v}$. By the unique walk lifting property, they are equal and so is their end vertex, implying
		 $\tilde{q}(\tilde{u})=\bar{u}=\tilde{q}_{\tilde{v},\bar{v}}(\tilde{u})$.
\end{proof}

\begin{lem}\label{lem:universal_property_unique}
	If for a graph $G$ there are two graphs $\tilde{G}$ and $\bar{G}$ and two triangular covering maps $p\colon \tilde{G}\to G$ and $q:\bar{G}\to G$ such that $p$ and $q$ both fulfil the universal property from \cref{lem:simple_connected_is_universal}, $\tilde{G}$ and $\bar{G}$ are isomorphic.
\end{lem}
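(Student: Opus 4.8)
The plan is to run the standard argument that objects characterised by a universal property are unique up to canonical isomorphism, being careful only about the prescribed basepoints. First I would fix a vertex $v\in V(G)$ together with preimages $\tilde{v}\in V(\tilde{G})$ and $\bar{v}\in V(\bar{G})$, so that $p(\tilde{v})=v=q(\bar{v})$. Since $p$ fulfils the universal property from \cref{lem:simple_connected_is_universal}, applying it to the triangular covering map $q$ yields a unique triangular covering map $\tilde{q}\colon \tilde{G}\to\bar{G}$ with $p=q\circ\tilde{q}$ and $\tilde{q}(\tilde{v})=\bar{v}$. Symmetrically, since $q$ also fulfils the universal property, applying it to $p$ yields a unique triangular covering map $\bar{q}\colon \bar{G}\to\tilde{G}$ with $q=p\circ\bar{q}$ and $\bar{q}(\bar{v})=\tilde{v}$.

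Next I would show that $\tilde{q}$ and $\bar{q}$ are mutually inverse. Consider the composite $\bar{q}\circ\tilde{q}\colon \tilde{G}\to\tilde{G}$. As a composition of two triangular covering maps it is again a triangular covering map, since being a local isomorphism on closed neighbourhoods is preserved under composition; moreover it satisfies $p\circ(\bar{q}\circ\tilde{q})=(p\circ\bar{q})\circ\tilde{q}=q\circ\tilde{q}=p$ together with $(\bar{q}\circ\tilde{q})(\tilde{v})=\bar{q}(\bar{v})=\tilde{v}$. On the other hand, the identity $\mathrm{id}_{\tilde{G}}$ is a triangular covering map with $p\circ\mathrm{id}_{\tilde{G}}=p$ and $\mathrm{id}_{\tilde{G}}(\tilde{v})=\tilde{v}$. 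Invoking the uniqueness clause of the universal property of $p$ (applied with $\bar{G}$ replaced by $\tilde{G}$, with $q$ replaced by $p$, and with basepoints $\tilde{v}$ on both sides), these two maps must coincide, so $\bar{q}\circ\tilde{q}=\mathrm{id}_{\tilde{G}}$. The analogous computation, with the roles of $(p,\tilde{G})$ and $(q,\bar{G})$ exchanged, gives $\tilde{q}\circ\bar{q}=\mathrm{id}_{\bar{G}}$.

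Consequently $\tilde{q}$ is a graph isomorphism with inverse $\bar{q}$, and hence $\tilde{G}\cong\bar{G}$, as claimed. I expect the only point requiring genuine care to be the bookkeeping of basepoints: the uniqueness assertion in \cref{lem:simple_connected_is_universal} holds only for covering maps that respect a prescribed basepoint, so one must verify that both $\bar{q}\circ\tilde{q}$ and $\mathrm{id}_{\tilde{G}}$ send $\tilde{v}$ to $\tilde{v}$ before comparing them via that uniqueness. The supporting fact that a composition of triangular covering maps is again a triangular covering map is immediate from the definition as a local isomorphism, but it is worth stating explicitly. Notably, no homotopy-theoretic input beyond the universal property itself is needed, so this proof is purely formal.
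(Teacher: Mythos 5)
Your proof is correct and takes essentially the same route as the paper's: obtain maps $\tilde{q}\colon\tilde{G}\to\bar{G}$ and $\bar{q}\colon\bar{G}\to\tilde{G}$ from the two universal properties, compose, and use the basepoint-respecting uniqueness clause of \cref{lem:simple_connected_is_universal} to compare the composite with the identity map. If anything, you are slightly more careful than the paper, which only verifies one composite ($\tilde{p}\circ\tilde{q}=\mathrm{id}$) and leaves the passage to ``isomorphism'' implicit, whereas you check both composites and explicitly record the needed fact that a composition of triangular covering maps is again a triangular covering map.
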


\begin{proof}
	Let $p\colon \tilde{G}\to G$ and $q\colon \bar{G}\to G$ be two triangular covering maps which both fulfil the universal property. Furthermore, let $\tilde{v}\in V(\tilde{G})$ and $\bar{v}\in V(\bar{G})$ be chosen such that $p(\tilde{v})=q(\bar{v})$. By the universal properties, there are (unique) triangular covering maps $\tilde{p}\colon\bar{G}\to \tilde{G}$ and $\tilde{q}\colon  \tilde{G}\to \bar{G}$ such that $p=q\circ\tilde{q}$, $\tilde{q}(\tilde{v})=\bar{v}$, $q=p\circ\tilde{p}$, and $\tilde{p}(\bar{v})=\tilde{v}$. Consequently, $p=p\circ \tilde{p}\circ\tilde{q}$ and $(\tilde{p}\circ\tilde{q})(\tilde{v})=\tilde{v}$.  As the identity map $id\colon \tilde{G}\to\tilde{G}$ is a triangular covering map that fulfils $p=p\circ id$ and $id(\tilde{v})=\tilde{v}$, we know by the uniqueness of the universal property of $p$ that $\tilde{p}\circ\tilde{q}=id$, which implies that $\tilde{q}\colon \tilde{G}\to \bar{G}$ is an isomorphism. 
\end{proof}

\begin{theo}\label{universal_corver_exandunique}
		Every connected graph has a universal triangular cover, which is unique up to isomorphism.
\end{theo}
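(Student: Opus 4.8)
The plan is to assemble the theorem from the three preceding lemmas, since all the substantive work has already been carried out there. The statement splits naturally into an existence part and a uniqueness part, and I would treat them in that order.

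For existence, I would simply invoke \cref{lem:existence_simply_connected_cover}, which directly produces, for any connected graph $G$, a triangular covering map $p\colon \tilde G\to G$ whose domain $\tilde G$ is triangularly simply connected. By the definition recalled in \cref{sec:proof_of_B_covers}, such a $p$ is precisely a universal triangular covering map and $\tilde G$ a universal triangular cover, so existence is immediate.

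For uniqueness, the key observation is that the defining feature of a universal cover---namely that its domain is triangularly simply connected---already forces the abstract universal factorisation property of \cref{lem:simple_connected_is_universal}. Concretely, I would let $p\colon \tilde G\to G$ and $q\colon \bar G\to G$ be two universal triangular covering maps, so that both $\tilde G$ and $\bar G$ are triangularly simply connected. Applying \cref{lem:simple_connected_is_universal} to each of $p$ and $q$ shows that both maps satisfy the universal property. Then \cref{lem:universal_property_unique}, which asserts that any two covering maps enjoying the universal property have isomorphic domains, yields $\tilde G\cong \bar G$, completing the uniqueness claim.

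I do not expect any genuine obstacle at this stage: the theorem is a formal corollary of the lemmas, and there are no further calculations to grind through. The only point requiring care is purely a matter of bookkeeping, namely verifying that the two notions of ``universal'' in play---``triangularly simply connected cover'' (the definition) and ``cover satisfying the universal factorisation property'' (the conclusion of \cref{lem:simple_connected_is_universal})---are correctly matched, so that the hypotheses of \cref{lem:universal_property_unique} are met. Once that identification is made explicit, both existence and uniqueness follow at once.
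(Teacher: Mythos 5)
Your proposal is correct and follows exactly the paper's own proof: existence via \cref{lem:existence_simply_connected_cover}, and uniqueness by applying \cref{lem:simple_connected_is_universal} to both universal covering maps and then concluding with \cref{lem:universal_property_unique}. The bookkeeping point you flag (matching ``triangularly simply connected'' with ``satisfies the universal factorisation property'') is precisely what \cref{lem:simple_connected_is_universal} supplies, so nothing further is needed.
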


\begin{proof}
		By \cref{lem:existence_simply_connected_cover}, the graph $G$ has a universal triangular cover.
		 Let $p\colon \tilde{G}\to G$ and $q\colon \bar{G}\to G$ be two universal triangular covering maps.
	By applying \cref{lem:simple_connected_is_universal}, they both have the universal property. By \cref{lem:universal_property_unique}, the universal triangular covers are isomorphic.
\end{proof}

Now we can look at the universal triangular cover through the lens of quotient graphs by using Galois covering maps. We reprove this lemma from \cite{BAUMEISTER2022112873} using only basic notation.

\begin{lem}\label{deck_trafo_group_galois}
		A universal triangular covering map $p\colon \tilde{G}\to G$ is Galois with  $\Gamma\coloneqq \{\gamma\in \Aut(\tilde{G})\mid p\circ \gamma=p\}$, which is called the \emph{deck transformation group} of $p$. Moreover, it holds that $(k^n\tilde{G})/\Gamma
	\cong k^n G$.
\end{lem}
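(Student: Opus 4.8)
The plan is to first show that $p$ is Galois with $\Gamma$ in the sense of the definition recalled above, \ie\ that the fibres $p^{-1}(v)$ coincide with the orbits of the $\Gamma$-action on $V(\tilde G)$, and then to deduce the $k^n$-statement from the stability results already cited in the excerpt. Note that $\Gamma$ is a subgroup of $\Aut(\tilde G)$, so it acts on $\tilde G$; only the identification of fibres with orbits needs argument.

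One inclusion is immediate: if $\gamma\in\Gamma$ and $\tilde v\in V(\tilde G)$, then $p(\gamma\tilde v)=(p\circ\gamma)(\tilde v)=p(\tilde v)$, so the orbit $\Gamma\tilde v$ is contained in the fibre $p^{-1}(p(\tilde v))$. Hence every orbit lies inside a single fibre. The substantial direction is that each fibre lies inside a single orbit, and this is where the universal property of \cref{lem:simple_connected_is_universal} enters. First I would fix vertices $\tilde v,\tilde v'\in V(\tilde G)$ with $p(\tilde v)=p(\tilde v')$. Applying \cref{lem:simple_connected_is_universal} with $\bar G=\tilde G$ and $q=p$ (legitimate, since $\tilde G$ is triangularly simply connected) yields a unique triangular covering map $\gamma\coloneqq\tilde q_{\tilde v,\tilde v'}\colon\tilde G\to\tilde G$ with $p=p\circ\gamma$ and $\gamma(\tilde v)=\tilde v'$.

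The key step is to recognise that $\gamma$ is an automorphism. Applying the lemma in the other direction gives a triangular covering map $\gamma'\coloneqq\tilde q_{\tilde v',\tilde v}$ with $p=p\circ\gamma'$ and $\gamma'(\tilde v')=\tilde v$. Then $\gamma'\circ\gamma$ is a triangular covering map $\tilde G\to\tilde G$ satisfying $p=p\circ(\gamma'\circ\gamma)$ and fixing $\tilde v$; since $\mathrm{id}_{\tilde G}$ shares these two properties, the uniqueness clause of \cref{lem:simple_connected_is_universal} forces $\gamma'\circ\gamma=\mathrm{id}_{\tilde G}$, and symmetrically $\gamma\circ\gamma'=\mathrm{id}_{\tilde G}$. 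Thus $\gamma$ is a bijective local isomorphism with inverse $\gamma'$, hence a graph automorphism, and $p\circ\gamma=p$ gives $\gamma\in\Gamma$ with $\gamma\tilde v=\tilde v'$. This places $\tilde v'$ in the orbit of $\tilde v$, establishing the reverse inclusion. Together the inclusions show that fibres equal orbits, so $p$ is Galois with $\Gamma$ and in particular $\tilde G/\Gamma\cong G$. For the ``moreover'' part I would invoke the cited stability results: by \cite[Proposition 2.2]{larrion2000locally} each $p_{k^n}\colon k^n\tilde G\to k^n G$ is again a triangular covering map, and by \cite[Proposition 3.2]{larrion2000locally}, since $p$ is Galois with $\Gamma$, so is $p_{k^n}$ with respect to the induced action of \cref{lem_clique_operator_keeps_equivariance}. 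Being Galois, the fibres of $p_{k^n}$ are exactly the $\Gamma$-orbits on $V(k^n\tilde G)$, which is precisely the assertion $(k^n\tilde G)/\Gamma\cong k^n G$.

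The main obstacle is the third paragraph: verifying that the map produced by the universal property is genuinely an automorphism rather than merely a covering endomorphism. Everything hinges on the uniqueness clause of \cref{lem:simple_connected_is_universal}, which is what allows me to identify the two composites with the identity; without it one only obtains triangular covering maps $\tilde G\to\tilde G$ and cannot conclude bijectivity.
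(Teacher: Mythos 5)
Your proof is correct and follows essentially the same route as the paper: orbits lie in fibres trivially, the reverse inclusion comes from applying \cref{lem:simple_connected_is_universal} with $q=p$ to produce a deck transformation, and the ``moreover'' part is delegated to \cite[Proposition 3.2]{larrion2000locally}. The only cosmetic difference is that where the paper cites \cref{universal_corver_exandunique} (whose proof via \cref{lem:universal_property_unique} contains exactly your two-sided composition-plus-uniqueness argument) to see that the resulting covering map $\tilde G\to\tilde G$ is an automorphism, you unpack that argument inline.
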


\begin{proof}
	As each $\gamma\in \Gamma$ fulfils $p\circ \gamma=p$, the group $\Gamma$ acts on every vertex preimage of $p$ individually. Thus, it suffices to show that 
	for each pair of vertices $\tilde{v},\tilde{w}$ with $p(\tilde{v})=p(\tilde{w})$ there is a $\gamma\in \Gamma$ such that $\gamma(\tilde{v})=\tilde{w}$.
	If we apply \cref{lem:simple_connected_is_universal} with $q=p$, we get a triangular covering map $\tilde{q}_{\tilde{v},\tilde{w}}$ which maps $\tilde{v}$ to $\tilde{w}$ and which is an isomorphism by 
	\cref{universal_corver_exandunique}, thus $\gamma=\tilde{q}_{\tilde{v},\tilde{w}}$ fulfils the condition.
	As $p$ is a Galois covering map, by \cite[Proposition 3.2]{larrion2000locally} so is $p_{k^n}$. Consequently, it holds that $(k^n\tilde{G})/\Gamma
	\cong k^n G$.
\end{proof}

\section{The Isomorphism between $\boldsymbol{G_n}$ and $\boldsymbol{k^nG}$}\label{appendix_c}


The proof of \cref{res:structure_theorem} as presented in \cite{BAUMEISTER2022112873} provides an explicit construction for the isomorphism $\psi_n$ between the clique graph $k^nG$ and the geometric clique graph $G_n$ (\cref{Def_theCliqueGraph}).
More precisely, isomorphisms $C_n$ are constructed between $G_n$ and $kG_{n-1}$.\nolinebreak\space

In this section we repeat the construction (\cref{sec:appendix_isomorphism_terminology}) and give a short argument for why this yields $\Gamma$-isomorphisms (\cref{sec:appendix_isomorphism_argument}) as required in \cref{sec:proof_of_B}.

%
%
%

\subsection{Notation and Isomorphisms}
\label{sec:appendix_isomorphism_terminology}


We define the hexagonal grid as well as triangular-shaped graphs in a way that enables precise definition of maps.
\begin{defi}\label{Def_HexagonalGrid}
	Define the coordinate set 
	\begin{equation*}
		\mathemph{\oldvec{D}_0} \isdef \big\{ (1,-1,0), (1,0,-1), (-1,1,0), (0,1,-1), (-1,0,1), (0,-1,1) \big\}.
	\end{equation*}
	For $m \in \Z$, the \emph{hexagonal grid of height} $\mathemph{m}$ is the graph $\boldsymbol{\mathrm{Hex_m}} = (V_m,E_m)$ with
	\begin{align*}
		\mathemph{V_m} &\isdef \{ (x_1,x_2,x_3) \in \Z^3 \mid x_1+x_2+x_3 = m \} \text{ and}\\ 
		\mathemph{E_m} &\isdef \{ \{x,y\} \subset V_m \mid x-y \in \vec{D}_0 \}.
	\end{align*}
	For $m\geq 0$ the \emph{triangular-shaped graph $\mathemph{\Delta_{m}}$ of side length} $\mathemph{m}$, 
	is defined as the induced subgraph $\Hex_m[V_m\cap\Z_{\geq 0}^3]$.
	The boundary $\mathemph{\partial \Delta_m}$ is the subgraph of $\Delta_m$ that consists of the vertices of degree less than six and the edges that lie in only a single triangle.
\end{defi}

For a locally cyclic graph $G$, a \emph{hexagonal chart} is a graph
isomorphism $\mu: H \to F$ (also written $H \iso{\mu} F$) with
induced subgraphs $H \subseteq \Hex_m$ and $F\subseteq G$.
For $(t_1,t_2,t_3)\in\Z^3$, we define the \emph{triangle inclusion map}:
\begin{equation*}
	\mathemph{\Delta_m^{t_1,t_2,t_3}} : \Delta_m \to \Hex_{m+t_1+t_2+t_3}, \qquad
	(a_1,a_2,a_3) \mapsto (a_1+t_1,a_2+t_2,a_3+t_3).
\end{equation*}

Furthermore, we define
\begin{align*}
	\mathemph{\oldvec{E}}\,&\isdef V_1\cap\N_0^3=\{(1,0,0),(0,1,0),(0,0,1)\},\\
	\mathemph{\nabla_{1}}&\isdef\Hex_2[(1,1,0),(0,1,1),(1,0,1)]\text{, and }\\
	\mathemph{\nabla_{2}'}&\isdef\Hex_1[(1,1,-1),(-1,1,1),(1,-1,1)].
\end{align*}


For a hexagonal chart
$\mu: \Delta_{m+1} \to S$ and $(t_1,t_2,t_3) \in \Z^3$, we denote
the image of $\mu \circ \Delta_{m+1-t_1-t_2-t_3}^{t_1,t_2,t_3}$ by
$\mathemph{\mu^{t_1,t_2,t_3}}$.
The following remark is a combination of \cite[Corollary 6.9]{BAUMEISTER2022112873} and \cite[Corollary 7.8]{BAUMEISTER2022112873}.

\begin{rem}\label{Cor_CliqueSummary}  
    If $G$ is a triangularly simply-connected locally cyclic graph of minimum degree $\delta\ge 6$, then for each $n\in\N_0$, there is an isomorphism $C_{n+1}\colon G_{n+1}\to kG_n$, for which a direct construction is given as follows:

	\begin{enumerate}[label=(\alph*)]

		\item Let $\Delta_{m}\iso{\mu}S\in V(G_{n+1})$, for $m\geq 1$. If $m=1$ let $\hat{\mu}\colon\nabla_{2}'\to G$ be the hexagonal chart extending $\mu$. It exists and is unique, as each pair of vertices of $S$ has one common neighbour outside $S$. Then,
		
		
		\resizebox{0.96\hsize}{!}{$
			C_{n+1}(S)=
			\underbrace{M_{m-1}}_{\lvert \cdot \rvert=3}
			\,\,\,\cup\!\!\!
			\underset{\lvert\cdot\rvert=0,\text{ if }n=m,}{\underbrace{M_{m+1}}_{\lvert\cdot \rvert\leq 3\phantom{,\text{ if }n=m,}}}
			\,\cup\!
			\underset{\lvert\cdot\rvert=0, \text{ if } n\leq m+2,}{\underbrace{M_{m+3}}_{\lvert\cdot\rvert\leq 1\phantom{, \text{ if } n\leq m+2,}}} 
			\!\!\!\!\cup\,\,\,\,
			\begin{cases}
				\emptyset, &\text{ if }m=1\text{ and }n\leq 1,\\
				\{\hat\mu(\nabla_2')\}, &\text{ if }m=1\text{ and }n\geq 2,\\ 
				\{\mu(\nabla_{1})\},  &\text{ if }m=2,\\
				\{ S\setminus \partial S\}, &\text{ if }m\geq 3.
			\end{cases} $}
		
		\begin{itemize}
			\item $M_{m-1}$ consists of the elements 
			$\Delta_{m-1}\cong\mu^{\vec{e}}$ for $\vec{e}\in \vec{E}$.
			\item $M_{m+1}$ consists of the elements $\Delta_{m+1}\iso{\nu}T$ 
			fulfilling $\mu=\nu\circ\Delta_{m-1}^{\vec{e}}$ for an  $\vec{e}\in \vec{E}$.
			\item $M_{m+3}$ consists of the element $\Delta_{m+3}\cong T$ 
			enclosing $S$ with distance $1$, \ie\ $S=T\setminus \partial T$.
		\end{itemize}
		\item For $\Delta_0\cong S\in V(G_{n+1})$, we denote the vertex of $S$ by $v$. In this case, 
		
		\resizebox{.95\hsize}{!}{$
			C_{n+1}(S)=
			\underbrace{\{T\in V(G_n)\mid T\cong \Delta_1,\ S\subseteq T\}}_{\lvert\cdot\rvert=\deg_G(v)}
			\;\cup\; 
			\underset{\lvert \cdot \rvert=2,\text{ if } \deg_G(v)=6 \text{ and } n\geq 3,}{\underbrace{\{T\in V(G_n)\mid T\cong \Delta_3,\ S\subseteq T\setminus \partial T\}.}_{\lvert \cdot\rvert=0,\text{ if } \deg_G(v)\geq 7 \text{ or } n\leq 2,}\phantom{a}}$}
		
	\end{enumerate}

	In conclusion, for each $m\geq 0$ and $S\cong \Delta_m$ the elements of $C_{n+1}(S)$ can only be isomorphic to $\Delta_{m-3},$  $\Delta_{m-1},$ $\Delta_{m+1},$ or $\Delta_{m+3}.$ 
\end{rem}

\subsection{$\boldsymbol\Gamma$-isomorphisms}
\label{sec:appendix_isomorphism_argument}

	Let $\Gamma$ be any group acting on $G$. Let $\Gamma$ act on $G_n$ and $k^nG$ as described in \cref{lem_clique_operator_keeps_equivariance} and \cref{lem_C_is_equivariant}, respectively.


	By close inspection of \cref{Cor_CliqueSummary}, 
	it can be seen that the isomorphism $C_{n}\colon G_n\to kG_{n-1}$ is a $\Gamma$-isomorphism in the following way: The elements of the clique $C_{n}(S)$ for some $\Delta_{m}\iso{\mu} S\in V(G_n)$ are each defined by hexagonal charts or by subgraph inclusions, which behave well towards the automorphisms induced by the elements from $\Gamma$. For example, the \pyramids\ from $M_{m-1}$ fulfil the following equivalences and similar calculations can be given for the other types of \pyramids\ in the clique:
\begin{align*}
	T\in M_{m-1}(S) \,\Longleftrightarrow\;\,& T= \mu^{\vec{e}} \text{ for some } \vec{e}\in \vec{E}\\
\,\Longleftrightarrow\;\,& \gamma(T)=(\gamma\circ \mu)^{\vec{e}} \text{ for some } \vec{e}\in \vec{E}\\
\,\Longleftrightarrow\;\,& \gamma(T)\in M_{m-1}(\gamma(S)).
\end{align*} 

Thus, a $\Gamma$-isomorphism $\psi_n\colon G_n\to k^nG$ is obtained from the following chain of $\Gamma$-isomorphisms:
%
%
\begin{align*}
	G_n\xrightarrow{C_n} k G_{n-1} 
	&\xrightarrow{(C_{n-1})_k} k(k G_{n-2})=k^2 G_{n-2}
	\\&\longrightarrow\cdots\longrightarrow k^{n-2}(k G_1)=k^{n-1} G_1\xrightarrow{(C_1)_{k^{n-1}}} k^{n-1}(kG)=k^n G.
\end{align*}

\end{document}